\newcommand{\Xcomment}[1]{}
\newtheorem{theorem}{Theorem}[section]
\newtheorem{lemma}[theorem]{Lemma}
\newtheorem{prop}[theorem]{Proposition}
\makeatletter \@addtoreset{equation}{section} \makeatother
\newenvironment{proof}{\noindent{\bf Proof}\/}%
{\hfill$\qed$\medskip}
\def\qed{ \ \vrule width.1cm height.3cm depth0cm}
\newenvironment{numitem1}{\refstepcounter{equation}\begin{enumerate}%
\item[(\thesection.\arabic{equation})]}{\end{enumerate}}
\newcommand{\refeq}[1]{(\ref{eq:#1})}  
\renewcommand{\section}{\@startsection{section}{1}{0pt}%
{-3.5ex plus -1ex minus -.2ex}{2.3ex plus .2ex}%
{\normalfont\Large}}
\renewcommand{\subsection}{\@startsection{subsection}{2}{0pt}%
{-3.0ex plus -1ex minus -.2ex}{-1.5ex plus .2ex}%
{\normalfont\normalsize\bf}}
\newcommand{\SEC}[1]{\ref{sec:#1}}  
\newcommand{\SSEC}[1]{\ref{ssec:#1}}  
\def\Rset{{\mathbb R}}
\def\Kset{{\mathbb K}}
\def\Escr{{\cal E}}
\def\Lscr{{\cal L}}
\def\Mscr{{\cal M}}
\def\Nscr{{\cal N}}
\def\Oscr{{\cal O}}
\def\Pscr{{\cal P}}
\def\Sscr{{\cal S}}
\def\Zscr{{\cal Z}}
\def\tilde{\widetilde}
\def\bar{\overline}
\def\eps{\epsilon}
\def\Path{{\rm Path}}
\def\Inter{{\rm Int}}
\def\Iw{{I^\circ}}
\def\Jw{{J^\circ}}
\def\Ib{{I^\bullet}}
\def\Jb{{J^\bullet}}
\def\Yr{{Y^{\rm r}}}
\def\Yc{{Y^{\rm c}}}
\def\Pstan{\Pscr^{\rm st}}
\def\sgn{{\rm sgn}}
\def\precast{\prec^\ast}
\def\horvert{\;\hbox{\unitlength=1mm\begin{picture}(2,3)%
\put(0,3){\line(1,0){2}}\put(2,0){\line(0,1){3}}%
\end{picture}}\;}
\def\verthor{\;\hbox{\unitlength=1mm\begin{picture}(2,3)%
\put(0,0){\line(0,1){3}}\put(0,0){\line(1,0){2}}%
\end{picture}}\;}
\begin{document}

\baselineskip=15pt
\parskip=2pt

\title{Two statements on path systems \\related to quantum minors}

\author{
Vladimir~I.~Danilov\thanks{Central Institute of Economics and Mathematics of
the RAS, 47, Nakhimovskii Prospect, 117418 Moscow, Russia; emails:
danilov@cemi.rssi.ru}
  \and
Alexander~V.~Karzanov\thanks{Institute for System Analysis at the FRC Computer
Science and Control of the RAS, 9, Prospect 60 Let Oktyabrya, 117312 Moscow,
Russia; email: sasha@cs.isa.ru. Corresponding author.}
  }

\date{}

 \maketitle

 \begin{abstract}
In~\cite{DK} we gave a complete combinatorial characterization of homogeneous
quadratic identities for minors of quantum matrices. It was obtained as a
consequence of results on minors of matrices of a special sort, the so-called
\emph{path matrices} $\Path_G$ generated by paths in special planar directed
graphs $G$.

In this paper we prove two assertions that were stated but left unproved
in~\cite{DK}. The first one says that any minor of $\Path_G$ is determined by a
system of disjoint paths, called a \emph{flow}, in $G$ (generalizing a similar
result of Lindstr\"om's type for the path matrices of Cauchon graphs
in~\cite{cast1}). The second, more sophisticated, assertion concerns certain
transformations of pairs of flows in $G$.

 \medskip

 {\em Keywords}\,: quantum matrix, planar graph, Cauchon diagram, Lindstr\"om Lemma

{\em AMS Subject Classification}:\, 16T99, 05C75, 05E99

 \end{abstract}

\parskip=3pt


\section{\Large Introduction}  \label{sec:intr}

This paper is a supplement to~\cite{DK} where we developed a graph theoretic
construction (borrowing an idea of~\cite{cast1}) that was used as the main tool
to obtain a complete combinatorial characterization for the variety of
homogeneous quadratic identities on minors of quantum matrices.

(Recall that when speaking of the \emph{algebra of $m\times n$ quantum
matrices}, one means the quantized coordinate ring
$\Oscr_q(\Mscr_{m,n}(\Kset))$ of $m\times n$ matrices over a field $\Kset$,
where $q$ is a nonzero element of $\Kset$. In other words, one considers the
$\Kset$-algebra generated by indeterminates  $x_{ij}$\; ($i\in[m],\,j\in[n]$)
satisfying Manin's relations~\cite{man}: for $i<\ell\le m$ and $j<k\le n$,
   \begin{gather}
   x_{ij}x_{ik}=qx_{ik}x_{ij},\qquad x_{ij}x_{\ell j}=qx_{\ell j}x_{ij},
                                              \label{eq:xijkl}\\
   x_{ik}x_{\ell j}=x_{ \ell j}x_{ik}\quad \mbox{and}\quad
    x_{ij}x_{\ell k}-x_{\ell k}x_{ij}=(q-q^{-1})x_{ik}x_{\ell j}.  \nonumber
    \end{gather}
Hereinafter for a positive integer $n'$, \;$[n']$ denotes $\{1,2,\ldots,n'\}$.
Another useful algebraic construction is the $m\times n$ \emph{quantum affine
space}, which is the $\Kset$-algebra generated by indeterminates $t_{ij}$\;
($i\in[m],\,j\in[n]$) subject to ``simpler'' commutation relations:
  \begin{eqnarray}
  t_{ij}t_{i'j'}=&qt_{i'j'}t_{ij}&\quad \mbox{if either $i=i'$ and $j<j'$,
     or $i<i'$ and $j=j'$},  \label{eq:trelat} \\
     =&t_{i'j'}t_{ij}& \quad\mbox{otherwise}.) \nonumber
  \end{eqnarray}

In this paper we prove two auxiliary theorems that were essentially used, but
left unproved, in~\cite{DK} (namely, Theorems~3.1 and~4.4 there). They concern
the class of edge-weighted planar graphs introduced in~\cite{DK} (under the
name of ``grid-shaped graphs''); in this paper we call they \emph{SE-graphs}. A
special case of these graphs is formed by the \emph{Cauchon graphs} introduced
in~\cite{cast1} in connection with the \emph{Cauchon diagrams} of~\cite{cach}.
The first theorem, viewed as a quantum analog of Lindstr\"om Lemma, is a direct
extension to the SE-graphs $G$ of the corresponding result established for
Cauchon graphs in~\cite{cast1}. It considers a matrix in which each entry is
represented as the sum of weights of paths connecting a certain pair of
vertices of $G$, called the \emph{path matrix} of $G$ and denoted by $\Path_G$.
The theorem asserts that any (quantized) minor of $\Path_G$ can be expressed
via systems of disjoint paths of $G$ connecting corresponding sets of vertices.
We refer to a system of this sort as a \emph{flow} in $G$.

The proof of the main result in~\cite{DK} (which can be regarded as a quantum
analog of a characterization of quadratic identities for the commutative case
in~\cite{DKK}) is based on a method of handling certain pairs of flows, called
\emph{double flows}, in an SE-graph $G$. An important ingredient of that proof
is a transformation of a double flow $(\phi,\phi')$ into another double flow
$(\psi,\psi')$ by use of an \emph{ordinary exchange operation}. The second
theorem that we are going to prove in this paper says that under such a
transformation the weight of a current double flow is multiplied by $q$ or
$q^{-1}$.

The paper is organized as follows. Section~\SEC{prelim} contains basic
definitions and formulates the first theorem. Section~\SEC{double} describes
exchange operations on double flows and formulates the second theorem.
Section~\SEC{two_paths} elaborates technical tools needed to prove the
theorems. It considers certain paths $P,Q$ in $G$ and describes possible
relations between the weights of the ordered pairs $(P,Q)$ and $(Q,P)$; this is
close to a machinery in~\cite{cast1,cast2}. The announced first and second
theorems are proved in Sections~\SEC{q_determ} and~\SEC{exchange},
respectively.


\section{\Large Preliminaries}  \label{sec:prelim}

We start with basic definitions and some elementary properties.
\medskip

\noindent\textbf{Paths in graphs.} Throughout, by a \emph{graph} we mean a
directed graph. A \emph{path} in a graph $G=(V,E)$ (with vertex set $V$ and
edge set $E$) is a sequence $P=(v_0,e_1,v_1,\ldots,e_k,v_k)$ such that each
$e_i$ is an edge connecting vertices $v_{i-1},v_i$. An edge $e_i$ is called
\emph{forward} if it is directed from $v_{i-1}$ to $v_i$, denoted as
$e_i=(v_{i-1},v_i)$, and \emph{ backward} otherwise (when $e_i=(v_i,v_{i-1})$).
The path $P$ is called {\em directed} if it has no backward edge, and {\em
simple} if all vertices $v_i$ are different. When $k>0$ and $v_0=v_k$, ~$P$ is
called a \emph{cycle}, and called a \emph{simple cycle} if, in addition,
$v_1,\ldots,v_k$ are different. When it is not confusing, we may use for $P$
the abbreviated notation via vertices: $P=v_0v_1\ldots v_k$, or via edges:
$P=e_1e_2\ldots e_k$.

Also, using standard terminology in graph theory, for a directed edge
$e=(u,v)$, we say that $e$ \emph{leaves} $u$ and \emph{enters} $v$, and that
$u$ is the \emph{tail} and $v$ is the \emph{head} of $e$.
\medskip

\noindent\textbf{SE-graphs.} A graph $G=(V,E)$ of this sort (also denoted as
$(V,E;R,C)$) is defined by the following conditions:

(i) $G$ is planar (with a fixed layout in the plane);

(ii) $G$ has edges of two types: \emph{horizontal} edges, or \emph{H-edges},
which are directed from left to right, and \emph{vertical} edges, or
\emph{V-edges}, which are directed downwards (so each edge points either
\emph{south} or \emph{east}, justifying the term ``SE-graph'');

(iii) $G$ has two distinguished subsets of vertices: set $R=\{r_1,\ldots,r_m\}$
of \emph{sources} and set $C=\{c_1,\ldots,c_n\}$ of \emph{sinks}; moreover,
$r_1,\ldots,r_m$ are disposed on a vertical line, in this order upwards, and
$c_1,\ldots,c_n$ are disposed on a horizontal line, in this order from left to
right;

(iv) each vertex (and each edge) of $G$ belongs to a directed path from $R$ to
$C$.

The set $V-(R\cup C)$ if \emph{inner} vertices of an SE-graph $G=(V,E)$ is
denoted by $W=W_G$. An example of SE-graphs with $m=3$ and $n=4$ is drawn in
the picture:

\vspace{0cm}
\begin{center}
\includegraphics{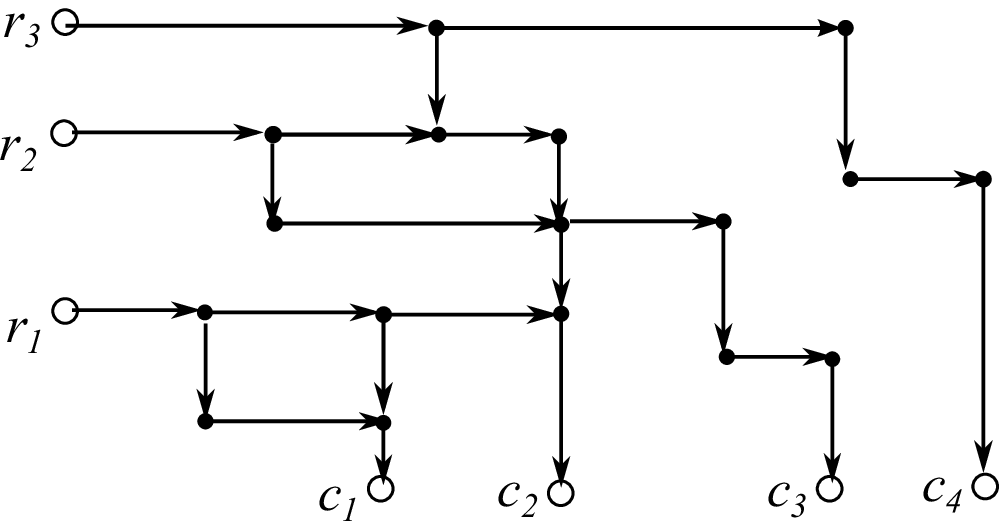}
\end{center}
\vspace{0cm}

Each inner vertex $v\in W$ is regarded as an indeterminate (generator), and we
assign a weight $w(e)$ to each edge $e$ in a way similar to the assignment for
Cauchon graphs in~\cite{cast1}. More precisely, for $e=(u,v)\in E$,
  \begin{numitem1} \label{eq:edge_weight}
  \begin{itemize}
  \item[(i)] $w(e):=v$ if $e$ is an H-edge with $u\in R$;
  \item[(ii)] $w(e):=u^{-1}v$ if $e$ is an H-edge with $u\in W$;
  \item[(iii)] $w(e):=1$ if $e$ is a V-edge.
    \end{itemize}
  \end{numitem1}
This gives rise to defining the weight $w(P)$ of a directed path
$P=e_1e_2\ldots e_k$ in $G$, to be the ordered (from left to right) product
  \begin{equation} \label{eq:wP}
  w(P)=w(e_1)w(e_2)\cdots w(e_k).
  \end{equation}

Then $w(P)$ is a Laurent monomial in elements of $W$. Note that when $P$ begins
in $R$ and ends in $C$, its weight can also be expressed in the following
useful form; cf.~\cite[Prop.~3.1.8]{cast2}. Let $u_1,v_1,u_2,v_2,\ldots,
u_{d-1},v_{d-1},u_d$ be the sequence of vertices where $P$ makes turns; namely,
$P$ changes the horizontal direction to the vertical one at each $u_i$, and
conversely at each $v_i$. Then (due to the ``telescopic effect'' caused
by~\refeq{edge_weight}(ii)),
  \begin{equation} \label{eq:telescop}
  w(P)=u_1v_1^{-1}u_2v_2^{-1}\cdots u_{d-1}v_{d-1}^{-1} u_d.
  \end{equation}

We assume that the generators $W$ obey (quasi)commutation laws somewhat similar
to those for the quantum affine space (cf.~\refeq{trelat}); namely,
  \begin{numitem1} \label{eq:commut_in_G}
  for distinct $u,v\in W$,
  \begin{itemize}
  \item[(i)] if there is a directed \emph{horizontal} path from $u$ to $v$ in $G$, then
$uv=qvu$;
  \item[(ii)] if there is a directed \emph{vertical} path from $u$ to $v$ in $G$, then
$vu=quv$;
  \item[(iii)] otherwise $uv=vu$.
  \end{itemize}
  \end{numitem1}

\noindent\textbf{Quantum minors.}  \label{ssec:quant_minor} It will be
convenient for us to visualize matrices in the Cartesian form: for an $m\times
n$ matrix $A=(a_{ij})$, the row indices $i=1,\ldots,m$ are assumed to increase
upwards, and the column indices $j=1,\ldots,n$ from left to right.

We denote by $A(I|J)$ the submatrix of $A$ whose rows are indexed by
$I\subseteq[m]$, and columns indexed by $J\subseteq[n]$. Let $|I|=|J|=:k$, and
let $I$ consist of $i_1<\ldots<i_k$ and $J$ consist of $j_1<\ldots<j_k$. Then
the $q$-\emph{determinant} of $A(I|J)$, or the $q$-\emph{minor} of $A$ for
$(I|J)$, is defined as
  \begin{equation} \label{eq:qminor}
  [I|J]_{A,q}:=\sum_{\sigma\in S_k} (-q)^{\ell(\sigma)}
    \prod_{d=1}^{k} a_{i_dj_{\sigma(d)}},
    \end{equation}
where, in the noncommutative case, the product under $\prod$ is ordered by
increasing $d$, and $\ell(\sigma)$ denotes the \emph{length} (number of
inversions) of a permutation $\sigma$. In the minor notation $[I|J]_{A,q}$, the
terms $A$ and/or $q$ may be omitted when they are clear from the context.
\medskip

\noindent\textbf{Path matrices.} An important construction in~\cite{cast1}
associates to a Cauchon graph $G$ a certain matrix, called the path matrix of
$G$, which has a nice property of Lindstr\"om's type: the $q$-minors of this
matrix correspond to appropriate systems of disjoint paths in $G$.

This is extended to an arbitrary SE-graph $G=(V,E;R,C)$. More precisely, let
$m:=|R|$ and $n:=|C|$. As before, $w=w_G$ denotes the edge weights in $G$
defined by~\refeq{edge_weight}. For $i\in[m]$ and $j\in[n]$, we denote the set
of directed paths from $r_i$ to $c_j$ in $G$ by $\Phi_G(i|j)$.
  \medskip

\noindent\textbf{Definition.} The \emph{path matrix} $\Path_G$ associated to
$G$ is the $m\times n$ matrix whose entries are defined by
  \begin{equation} \label{eq:Mat}
  \Path_G(i|j):=\sum\nolimits_{P\in\Phi_G(i|j)} w(P), \qquad (i,j)\in [m]\times [n],
  \end{equation}
In particular, $\Path_G(i|j)=0$ if $\Phi_G(i|j)=\emptyset$.
  \smallskip

Thus, the entries of $\Path_G$ belong to the $\Kset$-algebra $\Lscr_G$ of
Laurent polynomials generated by the set $W$ of inner vertices of $G$ subject
to relations~\refeq{commut_in_G}. (Note also that $\Path_G$ is a
$q$-\emph{matrix}, i.e., its entries obey Manin's relations;
see~\cite[Th.~3.2]{DK}).
\smallskip

\noindent\textbf{Definition.} Let $\Escr^{m,n}$ denote the set of pairs $(I|J)$
such that $I\subseteq [m]$, $J\subseteq [n]$ and $|I|=|J|$. Borrowing
terminology from~\cite{DKK}, we say that for $(I|J)\in\Escr^{m,n}$, a set
$\phi$ of pairwise \emph{disjoint} directed paths from the source set
$R_I:=\{r_i\;\colon i\in I\}$ to the sink set $C_J:=\{c_j\;\colon j\in J\}$ in
$G$ is an $(I|J)$-\emph{flow}. The set of $(I|J)$-flows is denoted by
$\Phi(I|J)=\Phi_G(I|J)$.
  \medskip

We throughout assume that the paths forming $\phi$ are ordered by increasing
the source indices. Namely, if $I$ consists of $i(1)<i(2)<\ldots< i(k)$ and $J$
consists of $j(1)<j(2)<\ldots<j(k)$, then $\ell$-th path $P_\ell$ in $\phi$
begins at $r_{i(\ell)}$, and therefore, $P_\ell$ ends at $c_{j(\ell)}$ (which
easily follows from the planarity of $G$, the ordering of sources and sinks in
the boundary of $G$ and the fact that the paths in $\phi$ are disjoint). We
write $\phi=(P_1,P_2,\ldots,P_k)$ and (similar to path systems in~\cite{cast1})
define the weight of $\phi$ to be the ordered product
  \begin{equation} \label{eq:w_phi}
  w(\phi):=w(P_1)w(P_2)\cdots w(P_k).
  \end{equation}

Our first theorem is a direct extension of a $q$-analog of Lindstr\"om's Lemma
shown for Cauchon graphs in~\cite[Th.~4.4]{cast1}; it gives a relationship
between flows and minors of path matrices.
  \begin{theorem} \label{tm:Lind}
Let $G$ be an SE-graph with $m$ sources and $n$ sinks. Then for the path matrix
$\Path=\Path_G$ and for any $(I|J)\in \Escr^{m,n}$, there holds
  \begin{equation} \label{eq:Lind}
[I|J]_{\Path,q}=\sum\nolimits_{\phi\in\Phi(I|J)} w(\phi).
  \end{equation}
  \end{theorem}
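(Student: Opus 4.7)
The plan is to expand the $q$-determinant via its definition \refeq{qminor}, substitute each path-matrix entry using \refeq{Mat}, and interpret the resulting double sum as a signed sum over ``path systems'' --- tuples $(P_1,\ldots,P_k)$ with $P_d\in\Phi_G(i_d|j_{\sigma(d)})$ for some $\sigma\in S_k$, weighted by $(-q)^{\ell(\sigma)}\,w(P_1)w(P_2)\cdots w(P_k)$. Disjoint path systems will be shown to produce the desired sum over flows, and intersecting ones will cancel pairwise under a sign-reversing involution.

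First I would observe that by the planarity of $G$ together with the prescribed vertical ordering of the sources $R$ and the horizontal ordering of the sinks $C$, a tuple $(P_1,\ldots,P_k)$ of pairwise vertex-disjoint paths can only occur when $\sigma=\mathrm{id}$; otherwise two of the paths would be forced to share a vertex. Consequently the disjoint path systems correspond bijectively to the $(I|J)$-flows $\phi$, each contributing precisely $w(\phi)$ to the expansion. For every non-disjoint tuple, I would define a canonical intersection datum: take the smallest $d$ such that $P_d$ meets some later path, then the smallest $e>d$ with $P_d\cap P_e\neq\emptyset$, and then the first common vertex $z$ along $P_d$. Swapping the tails of $P_d$ and $P_e$ beyond $z$ produces a new non-disjoint tuple $(\tilde P_1,\ldots,\tilde P_k)$ associated with the permutation $\sigma'=\sigma\circ(d\;e)$; since the selected datum $(d,e,z)$ is preserved under the swap, this map is an involution on non-disjoint tuples.

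The crux --- and the main obstacle --- is to show that the two paired tuples cancel in the signed sum, i.e.\ that
\begin{equation*}
(-q)^{\ell(\sigma)}\,w(P_1)\cdots w(P_k) + (-q)^{\ell(\sigma')}\,w(\tilde P_1)\cdots w(\tilde P_k) = 0
\end{equation*}
in the algebra $\Lscr_G$. Because each weight is an \emph{ordered} product and the generators obey the quasi-commutation relations \refeq{commut_in_G}, this comparison is subtle: one must track how the exchanged tail segments of $P_d$ and $P_e$ quasi-commute past the inner vertices of the intermediate paths $P_{d+1},\ldots,P_{e-1}$, and combine those $q$-factors with the exchange on the two paths themselves. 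I would reduce everything to a local statement about a single ordered pair $(P,Q)$ sharing a vertex $z$, producing a formula of the shape $w(P)w(Q)=q^{\varepsilon}w(\tilde P)w(\tilde Q)$ with $\varepsilon\in\{\pm 1\}$ chosen so that $(-q)^{\ell(\sigma)-\ell(\sigma')}q^{\varepsilon}=-1$, and then invoke precisely the two-path machinery promised in Section~\SEC{two_paths}, which is designed to carry out exactly this bookkeeping in the spirit of \cite{cast1,cast2}. Once this local weight identity is in hand, summing over involution-orbits kills the non-disjoint contributions and leaves the right-hand side of \refeq{Lind}.
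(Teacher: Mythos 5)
Your overall architecture --- expand the $q$-minor, reinterpret as a signed sum over path systems, identify the disjoint systems with flows (correctly noting that planarity forces $\sigma=\mathrm{id}$ for these), and cancel the rest by a sign-reversing involution --- is exactly the paper's, and the identity you isolate as ``the crux'' is indeed the whole content of the proof. But two design choices in your involution make that crux step fail, and you leave it unproved. First, you pair $P_d$ with the smallest $e>d$ meeting it, which need not be $e=d+1$. A non-adjacent swap is doubly problematic: (i) the exchanged tails must be quasi-commuted past the full weights $w(P_{d+1}),\ldots,w(P_{e-1})$, and the two-path lemmas of Section~\SEC{two_paths} only compare $w(P)w(Q)$ with $w(Q)w(P)$ for a single weakly intersecting pair --- they say nothing about moving a tail segment past an intermediate path that may itself intersect it; (ii) your sign condition $(-q)^{\ell(\sigma)-\ell(\sigma')}q^{\varepsilon}=-1$ with $\varepsilon\in\{\pm1\}$ forces $|\ell(\sigma)-\ell(\sigma')|=1$, but composing with a non-adjacent transposition $(d\;e)$ changes the length by an arbitrary odd number, so the cancellation cannot come out of a single factor $q^{\pm1}$. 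Second, you split at the \emph{first} common vertex $z$ along $P_d$, so the two exchanged tails may re-intersect beyond $z$; they then violate the weak-intersection hypothesis \refeq{pathsPQ} under which Lemmas~\ref{lm:asP=asQ}--\ref{lm:2atP=asQ} are proved, so the machinery you plan to invoke does not apply to the pieces you have created.

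The paper sidesteps both issues at the level of the combinatorial set-up rather than the algebra. By \refeq{PiPi+1} (a planarity fact you did not use), every non-flow system contains two \emph{consecutive} intersecting paths; one takes the minimal such index $i$ and the \emph{last} common vertex $v$ of $P_i,P_{i+1}$. Then the swap exchanges adjacent factors in the ordered product, the pieces $K,L,K',L'$ are weakly intersecting where needed (the tails $L,L'$ meet only at $v$), the chain of Lemmas~\ref{lm:asP=asQ}--\ref{lm:1atP=asQ} yields $w(P_i)w(P_{i+1})=q\,w(Q_i)w(Q_{i+1})$, and the adjacent transposition gives $\ell(\sigma_{\eta(\Pscr)})=\ell(\sigma_\Pscr)+1$ exactly. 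You should restructure your involution along these lines; as written, the cancellation step is a genuine gap, not merely an omitted computation.
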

This theorem (stated in~\cite[Th.~3.1]{DK}) is proved in
Section~\SEC{q_determ}.


\section{\Large Double flows, matchings, and exchange operations}  \label{sec:double}

A study of quadratic identities for minors of quantum matrices in~\cite{DK} is
reduced to handling ordered products of minors of the path matrices of
SE-graphs $G$, and further, in view of Theorem~\ref{tm:Lind}, to handling
ordered pairs of flows in $G$. On this way, a crucial role is played by
exchange operations on pairs of flows. To describe them, we first need some
definitions and conventions.

Let $G=(V,E;R,C)$ be an SE-graph  with $|R|=m$ and $|C|=n$. For
$(I|J),(I'|J')\in \Escr^{m,n}$, consider an $(I|J)$-flow $\phi$ and an
$(I'|J')$-flow $\phi'$ in $G$. We call the ordered pair $(\phi,\phi')$ a
\emph{double flow} in $G$. Define
   \begin{gather}
   \Iw:=I-I',\quad \Jw:=J-J',\quad \Ib:=I'-I,\quad \Jb:=J'-J,
                          \label{eq:white-black} \\
   \Yr:=\Iw\cup\Ib\quad \mbox{and}\quad \Yc:=\Jw\cup\Jb. \nonumber
   \end{gather}

Note that $|I|=|J|$ and $|I'|=|J'|$ imply that $|\Yr|+|\Yc|$ is even and that
  \begin{equation} \label{eq:balancIJ}
 |\Iw|-|\Ib|=|\Jw|-|\Jb|.
  \end{equation}
It is convenient for us to interpret $\Iw$ and $\Ib$ as the sets of
\emph{white} and \emph{black} elements of $\Yr$, respectively, and similarly
for $\Jw,\Jb,\Yc$, and to visualize these objects by use of a \emph{circular
diagram} $D$ in which the elements of $\Yr$ (resp. $\Yc$) are disposed in the
increasing order from left to right in the upper (resp. lower) half of a
circumference $O$. For example if, say, $\Iw=\{3\}$, $\Ib=\{1,4\}$,
$\Jw=\{2',5'\}$ and $\Jb=\{3',6',8'\}$, then the diagram is viewed as in the
left fragment of the picture below. (Sometimes, to avoid a possible mess
between elements of $\Yr$ and $\Yc$, and when it leads to no confusion, we
denote elements of $\Yc$ with primes.)

\vspace{0cm}
\begin{center}
\includegraphics{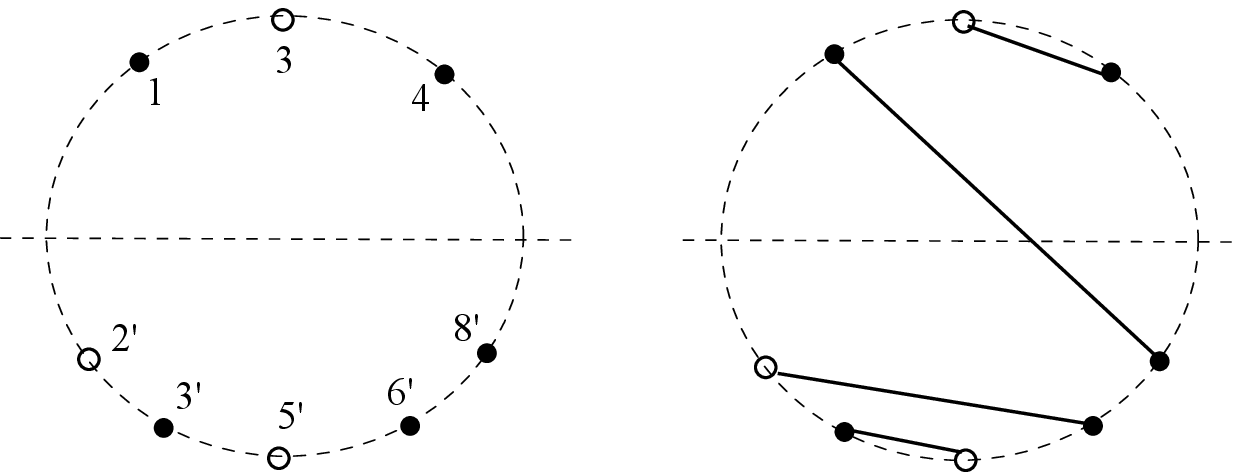}
\end{center}
\vspace{-0.3cm}

We refer to the quadruple $(I|J,I'|J')$ as a \emph{cortege}, and to
$(\Iw,\Ib,\Jw,\Jb)$ as the \emph{refinement} of $(I|J,I'|J')$, or as a
\emph{refined cortege}.
 \smallskip

Let $M$ be a partition of $\Yr\sqcup \Yc$ into 2-element sets (recall that
$A\sqcup B$ denotes the disjoint union of sets $A,B$). We refer to $M$ as a
\emph{perfect matching} on $\Yr\sqcup \Yc$, and to its elements as
\emph{couples}.

Also we say that $\pi\in M$ is: an $R$-\emph{couple} if $\pi\subseteq \Yr$, a
$C$-\emph{couple} if $\pi\subseteq \Yc$, and an $RC$-\emph{couple} if $|\pi\cap
\Yr|=|\pi\cap \Yc|=1$ (as though $\pi$ ``links'' two sources, two sinks, and
one source and one sink, respectively). \smallskip

\noindent\textbf{Definition.} A (perfect) matching $M$ as above is called a
\emph{feasible} matching for $(\Iw,\Ib,\Jw,\Jb)$ if:
  \begin{numitem1} \label{eq:feasM}
  \begin{itemize}
\item[(i)] for each $\pi=\{i,j\}\in M$, the elements $i,j$ have different
colors if $\pi$ is an $R$-couple or a $C$-couple, and have the same color if
$\pi$ is an $RC$-couple;
\item[(ii)] $M$ is \emph{planar}, in the sense that the chords connecting the
couples in the circumference $O$ are pairwise non-intersecting.
  \end{itemize}
  \end{numitem1}

The right fragment of the above picture illustrates an instance of feasible
matchings.

Return to a double flow $(\phi,\phi')$ as above. We associate to it a feasible
matching for $(\Iw,\Ib,\Jw,\Jb)$ as follows. Let $V_\phi$ and $E_\phi$,
respectively, denote the sets of vertices and edges of $G$ occurring in $\phi$,
and similarly for $\phi'$. Denote by $\langle U\rangle$ the subgraph of $G$
induced by the set of edges
   $$
   U:=E_\phi\triangle E_{\phi'},
   $$
writing $A\triangle B$ for the symmetric difference $(A-B)\cup(B-A)$ of sets
$A,B$. Then
  \begin{numitem1} \label{eq:degrees}
a vertex $v$ of $\langle U\rangle$ has degree 1 if $v\in R_{\Iw}\cup
R_{\Ib}\cup C_{\Jw}\cup C_{\Jb}$, and degree 2 or 4 otherwise.
  \end{numitem1}

We slightly modify $\langle U\rangle$ by splitting each vertex $v$ of degree 4
in $\langle U\rangle$ (if any) into two vertices $v',v''$ disposed in a small
neighborhood of $v$ so that the edges entering (resp. leaving) $v$ become
entering $v'$ (resp. leaving $v''$); see the picture.

\vspace{0cm}
\begin{center}
\includegraphics{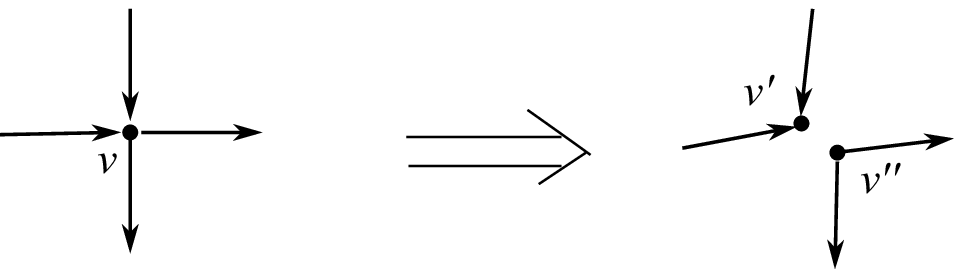}
\end{center}
\vspace{0cm}

The resulting graph, denoted as $\langle U\rangle '$, is planar and has
vertices of degree only 1 and 2. Therefore, $\langle U\rangle'$ consists of
pairwise disjoint (non-directed) simple paths $P'_1,\ldots,P'_k$ (considered up
to reversing) and, possibly, simple cycles $Q'_1,\ldots,Q'_d$. The
corresponding images of $P'_1,\ldots,P'_k$ (resp. $Q'_1,\ldots,Q'_d$) give
paths $P_1,\ldots,P_k$ (resp. cycles $Q_1,\ldots,Q_d$) in $\langle U\rangle$.
When $\langle U\rangle$ has vertices of degree 4, some of the latter paths and
cycles may be self-intersecting and may ``touch'', but not ``cross'', each
other. The following simple facts are shown in~\cite{DK}.
  \begin{lemma} \label{lm:P1Pk}
{\rm (i)} $k=(|\Iw|+|\Ib|+|\Jw|+|\Jb|)/2$;

{\rm(ii)} the set of endvertices of $P_1,\ldots,P_k$ is $R_{\Iw\cup\Ib}\cup
C_{\Jw\cup\Jb}$; moreover, each $P_i$ connects either $R_{\Iw}$ and $R_{\Ib}$,
or $C_{\Jw}$ and $C_{\Jb}$, or $R_{\Iw}$ and $C_{\Jw}$, or $R_{\Ib}$ and
$C_{\Jb}$;

{\rm(iii)} in each path $P_i$, the edges of $\phi$ and the edges of $\phi'$
have different directions (say, the former edges are all forward, and the
latter ones are all backward).
  \end{lemma}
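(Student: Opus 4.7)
My plan is to analyze $\langle U\rangle$ locally at every vertex, using that $\phi$ and $\phi'$ are each a system of pairwise disjoint directed paths, and then to read off (i)--(iii) from the resulting structure. At each vertex $v$ I would record the (at most one) in-edge and (at most one) out-edge used by $\phi$, and likewise for $\phi'$; the edges of $\langle U\rangle$ at $v$ are then the symmetric difference of these two multisets. Using that a single directed edge cannot simultaneously be an in-edge and an out-edge at the same vertex, a short case check confirms~\refeq{degrees}: sources in $R_\Iw$ (resp.\ $R_\Ib$) carry degree~$1$ (a $\phi$-out-edge, resp.\ $\phi'$-out-edge), sinks in $C_\Jw$ (resp.\ $C_\Jb$) carry degree~$1$, and all remaining vertices carry degree $0$, $2$, or $4$. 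After splitting each degree-$4$ vertex by the prescribed rule (incoming edges to $v'$, outgoing edges to $v''$), $\langle U\rangle'$ has only vertices of degree $1$ and $2$, so it decomposes uniquely into simple paths $P'_1,\ldots,P'_k$ and simple cycles. Since the degree-$1$ vertices are exactly those in $R_{\Iw\cup\Ib}\cup C_{\Jw\cup\Jb}$ and each simple path contributes exactly two of them, counting gives (i).

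For (iii), I would color an edge of $U$ \emph{red} if it lies in $E_\phi - E_{\phi'}$ and \emph{blue} otherwise, and classify each internal vertex $w$ of a path $P'_i$ into one of three local types: (a) one incident edge enters $w$ and the other leaves $w$; (b) both enter $w$; (c) both leave $w$. Type (a) forces the two edges to have the same color, since they must form the unique in/out pair of a single $\phi$- or $\phi'$-path through $w$ (otherwise $w$ would be a degree-$4$ vertex). Types (b) and (c) force the two edges to come from different flows, for neither $\phi$ nor $\phi'$ can use two in-edges (or two out-edges) at one vertex. The split halves $v',v''$ of a former degree-$4$ vertex are of types (b) and (c) respectively by design. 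Traversing $P'_i$ in a fixed direction, at a type-(a) vertex both the color and the $G$-direction of traversal are preserved, whereas at a type-(b) or type-(c) vertex both flip simultaneously. Hence the pairing ``red $\leftrightarrow$ forward, blue $\leftrightarrow$ backward'' is a global invariant of the traversal, which is exactly (iii).

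For (ii), I would combine (iii) with an endpoint classification: at a degree-$1$ endpoint $v$, the unique incident edge of $U$ is a red out-edge if $v\in R_\Iw$, a blue out-edge if $v\in R_\Ib$, a red in-edge if $v\in C_\Jw$, and a blue in-edge if $v\in C_\Jb$. A path starting in $R_\Iw$ begins with a red forward edge, so by the invariant of (iii) it can only terminate with another red forward edge (an in-edge, hence ending in $C_\Jw$) or with a blue backward edge (an out-edge, hence ending in $R_\Ib$); the three symmetric starting cases eliminate all other pairings and leave exactly the four combinations listed in (ii). I expect the main obstacle to be a clean formulation of the alternation argument of (iii), and in particular verifying that the splitting rule places the two halves of each former degree-$4$ vertex into types (b) and (c) so that the color/direction invariant survives the passage from $\langle U\rangle$ to $\langle U\rangle'$; once this is in hand, both (i) and (ii) reduce to counting and case-checking.
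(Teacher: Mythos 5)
Your argument is correct and complete: the local in/out-edge bookkeeping at each vertex yields the degree statement~\refeq{degrees}, the classification of internal vertices into the three types gives the colour/direction alternation invariant of part (iii), and parts (i) and (ii) then follow by counting degree-one vertices and matching endpoint types. The paper itself only cites~\cite{DK} for this lemma rather than proving it, but your route is the natural one and I see no gaps.
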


Thus, each $P_i$ is represented as a concatenation $P_i^{(1)}\circ
P_i^{(2)}\circ\ldots\circ P_i^{(\ell)}$ of forwardly and backwardly directed
paths which are alternately contained in $\phi$ and $\phi'$. We call $P_i$ an
\emph{exchange path} (by a reason that will be clear later). The endvertices of
$P_i$ determine, in a natural way, a pair of elements of $\Yr\sqcup \Yc$,
denoted by $\pi_i$. Then $M:=\{\pi_1,\ldots,\pi_k\}$ is a perfect matching on
$\Yr\sqcup \Yc$.

Moreover, $M$ is a feasible matching for $(\Iw,\Ib,\Jw,\Jb)$, since
property~\refeq{feasM}(i) follows from Lemma~\ref{lm:P1Pk}(ii), and
property~\refeq{feasM}(ii) is provided by the fact that $P'_1,\ldots,P'_k$ are
pairwise disjoint simple paths in $\langle U\rangle'$. We denote $M$ as
$M(\phi,\phi')$, and for $\pi\in M$, denote by $P(\pi)$ the exchange path $P_i$
corresponding to $\pi$ (i.e., $\pi=\pi_i$).

Figure~\ref{fig:phi} illustrates an instance of $(\phi,\phi')$ for
$I=\{1,2,3\}$, $J=\{1',3',4'\}$, $I'=\{2,4\}$, $J'=\{2',3'\}$; here $\phi$ and
$\phi'$ are drawn by solid and dotted lines, respectively (in the left
fragment), the subgraph $\langle E_\phi\triangle E_{\phi'}\rangle$ consists of
three paths and one cycle (in the middle), and the circular diagram illustrates
$M(\phi,\phi')$ (in the right fragment).

\begin{figure}[htb]
\vspace{0.3cm}
\begin{center}
\includegraphics{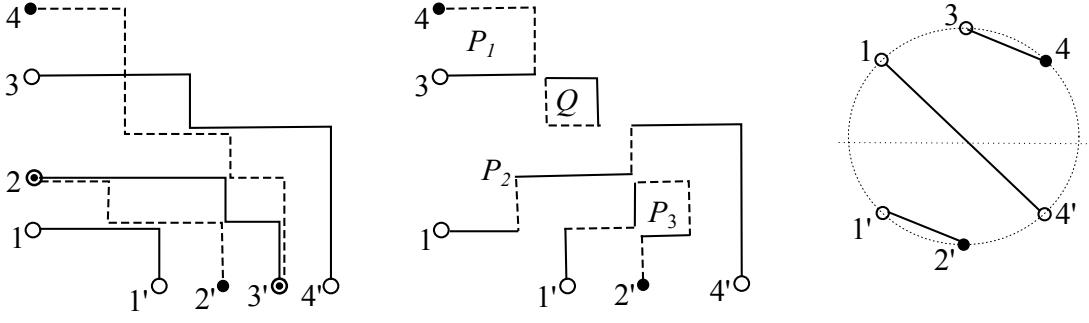}
\end{center}
\vspace{-0.3cm}
 \caption{ flows $\phi$ and $\phi'$ (left);  $\langle E_\phi\triangle
 E_{\phi'}\rangle$ (middle); $M(\phi,\phi')$ (right)}
 \label{fig:phi}
  \end{figure}

\noindent \textbf{Ordinary flow exchange operation.} Let us be given a double
flow $(\phi,\phi')$ for a cortege $(I|J,\,I'|J')$. Fix a couple $\pi=\{i,j\}\in
M(\phi,\phi')$. The operation w.r.t. $\pi$ rearranges $(\phi,\phi')$ into
another double flow $(\psi,\psi')$ for some $(\tilde I|\tilde J,\,\tilde
I'|\tilde J')$, as follows.

Consider the exchange path $P=P(\pi)$ corresponding to $\pi$, and let $\Escr$
be the set of edges of $P$. Define
  $$
  \tilde I:=I\triangle (\pi\cap \Yr), \quad \tilde I':=I'\triangle (\pi\cap \Yr), \quad
  \tilde J:=J\triangle (\pi\cap \Yc), \quad \tilde J':=J'\triangle (\pi\cap \Yc).
  $$

The following simple lemma is shown in~\cite{DK}.
  \begin{lemma} \label{lm:phi-psi}
The subgraph $\psi$ induced by $E_\phi\triangle\Escr$ gives a $(\tilde I|\tilde
J)$-flow, and the subgraph $\psi'$ induced by $E_{\phi'}\triangle \Escr$ gives
a $(\tilde I'|\tilde J')$-flow in $G$. Furthermore, $E_\psi\cup E_{\psi'}=
E_\phi\cup E_{\phi'}$,\; $E_\psi\triangle E_{\psi'}= E_\phi\triangle E_{\phi'}$
($=U$), and $M(\psi,\psi')=M(\phi,\phi')$.
  \end{lemma}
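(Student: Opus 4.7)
The plan is to interpret the operation as literally swapping the $\phi$- and $\phi'$-portions of the exchange path $P$, and then to verify the required flow structure by a local degree count at every vertex. The first observation is that, since $\Escr=E(P)$ lies in the component of $\langle U\rangle'$ corresponding to $P$ and $U=E_\phi\triangle E_{\phi'}$, every edge of $\Escr$ belongs to exactly one of $E_\phi, E_{\phi'}$. Write
$$\Escr_1:=\Escr\cap E_\phi,\qquad \Escr_2:=\Escr\cap E_{\phi'},$$
so that $\Escr=\Escr_1\sqcup \Escr_2$, $E_\psi=(E_\phi\setminus\Escr_1)\cup\Escr_2$, and $E_{\psi'}=(E_{\phi'}\setminus\Escr_2)\cup\Escr_1$. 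A direct set-theoretic computation then yields $E_\psi\cup E_{\psi'}=E_\phi\cup E_{\phi'}$ and $E_\psi\cap E_{\psi'}=E_\phi\cap E_{\phi'}$, and hence $E_\psi\triangle E_{\psi'}=U$.

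Next, I would verify that $\psi$ is an $(\tilde I|\tilde J)$-flow (the argument for $\psi'$ is symmetric). Since every edge of $G$ points south or east, $G$ is acyclic, and so is $\psi$; it therefore suffices to show that every vertex has $\psi$-in-degree and $\psi$-out-degree at most $1$, that the vertices with $d^+_\psi(v)>d^-_\psi(v)$ form exactly $R_{\tilde I}$, and that those with $d^-_\psi(v)>d^+_\psi(v)$ form exactly $C_{\tilde J}$. For $v$ not on $P$ (in $\langle U\rangle'$), $\psi$ agrees with $\phi$ locally and nothing changes. For an internal vertex $v$ of $P$, by Lemma~\ref{lm:P1Pk}(iii) one may orient $P$ so that the $G$-directions of all $\Escr_1$-edges agree with the forward direction of $P$ while those of all $\Escr_2$-edges are opposite; the two $P$-edges at $v$ then fall into one of four subcases (both in $\Escr_1$; both in $\Escr_2$; one of each, both $G$-in-edges at $v$; one of each, both $G$-out-edges at $v$), and a brief check of each subcase gives $d^\pm_\psi(v)\in\{0,1\}$ with $d^+_\psi(v)=d^-_\psi(v)$. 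For an endpoint $v$ of $P$, Lemma~\ref{lm:P1Pk}(ii) places $v$ in $R_{\Iw}\cup R_{\Ib}\cup C_{\Jw}\cup C_{\Jb}$, and the unique $P$-edge at $v$ lies in $\Escr_1$ iff $v$ is a $\phi$-endpoint (i.e.\ $v\in R_{\Iw}\cup C_{\Jw}$); the swap therefore removes or adds $v$ as a $\psi$-endpoint exactly so as to match the definitions $\tilde I=I\triangle(\pi\cap\Yr)$ and $\tilde J=J\triangle(\pi\cap\Yc)$.

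Finally, since $E_\psi\triangle E_{\psi'}=U$, the subgraph $\langle U\rangle$ and its splitting $\langle U\rangle'$---which depends only on $G$'s edge directions, not on $\phi,\phi'$---are unchanged in passing from $(\phi,\phi')$ to $(\psi,\psi')$; hence the decomposition into exchange paths and cycles, and the associated matching, coincide, giving $M(\psi,\psi')=M(\phi,\phi')$. The main technical subtlety is the case analysis at degree-$4$ vertices of $\langle U\rangle$ that get split into two copies in $\langle U\rangle'$: one has to track which of the copies lie on $P$ (possibly both, if $P$ revisits the underlying vertex) and how the swap interacts with the separate $\phi$- and $\phi'$-passages through it; once this bookkeeping is made explicit, the flow property and the endpoint assignments go through as described.
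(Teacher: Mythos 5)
Your argument is correct. Note that this paper does not actually prove Lemma~\ref{lm:phi-psi} --- it is quoted from~\cite{DK} --- so there is no internal proof to compare against; your route (the set-theoretic identities for $E_\psi,E_{\psi'}$ via $\Escr=\Escr_1\sqcup\Escr_2$, then acyclicity of $G$ plus a local in/out-degree count at vertices off $P$, internal to $P$, and at the two endpoints) is the natural one and is the standard way such exchange lemmas are verified in this literature. The four subcases at an internal vertex do check out once one uses Lemma~\ref{lm:P1Pk}(iii), and the degree-$4$ bookkeeping you flag is harmless because the splitting of such a vertex into its in-copy and out-copy depends only on the edge directions in $G$, so each copy lying on $P$ swaps a $\phi$-edge for a $\phi'$-edge of the same orientation at that vertex; likewise $M(\psi,\psi')=M(\phi,\phi')$ follows since $\langle U\rangle'$ and hence its path/cycle decomposition are unchanged.
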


We call the transformation $(\phi,\phi')\stackrel{\pi}\longmapsto (\psi,\psi')$
in this lemma the \emph{ordinary flow exchange operation} for $(\phi,\phi')$
\emph{using} $\pi\in M(\phi,\phi')$ (or using $P(\pi)$). Clearly a similar
operation applied to $(\psi,\psi')$ using the same $\pi$ returns
$(\phi,\phi')$. The picture below illustrates flows $\psi,\psi'$ obtained from
$\phi,\phi'$ in Fig.~\ref{fig:phi} by the ordinary exchange operations using
the path $P_2$ (left) and the path $P_3$ (right).

\vspace{0.cm}
\begin{center}
\includegraphics{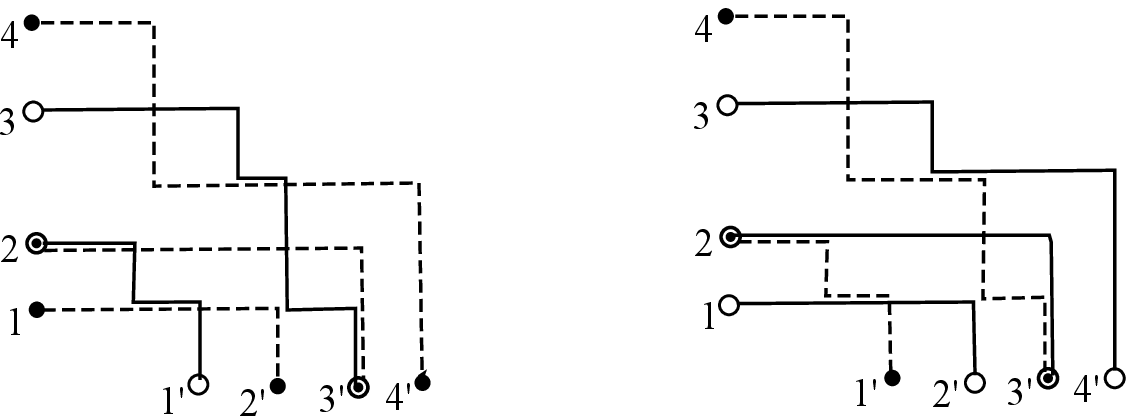}
\end{center}
\vspace{-0.3cm}

Now we formulate the second theorem of this paper; it will be proved in
Section~\SEC{exchange}.

  \begin{theorem} \label{tm:single_exch}
Let $\phi$ be an $(I|J)$-flow, and $\phi'$ an $(I'|J')$-flow in $G$. Let
$(\psi,\psi')$ be the double flow obtained from $(\phi,\phi')$ by the ordinary
flow exchange operation using a couple $\pi=\{f,g\}\in M(\phi,\phi')$. Then:

{\rm(i)} when $\pi$ is an $R$- or $C$-couple and $f<g$, we have
   \begin{eqnarray*}
   w(\phi)w(\phi')=qw(\psi)w(\psi') \quad &\mbox{in case}&\;\; f\in I\cup J; \\
 w(\phi)w(\phi')=q^{-1}w(\psi)w(\psi') \quad &\mbox{in case}&\;\;
                                                   f\in I'\cup J';
   \end{eqnarray*}

{\rm(ii)} when $\pi$ is an $RC$-couple, we have
$w(\phi)w(\phi')=w(\psi)w(\psi')$.
  \end{theorem}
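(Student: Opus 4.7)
My plan is to deduce Theorem~\ref{tm:single_exch} from the two-path analysis announced for Section~\SEC{two_paths}, viewing the ordinary exchange operation as a composition of elementary swaps along the alternating exchange path $P=P(\pi)$. Write $P=P^{(1)}\circ P^{(2)}\circ\cdots\circ P^{(\ell)}$ for the decomposition into maximal monochromatic directed subpaths, alternating between edges of $\phi$ and edges of $\phi'$ (cf.\ Lemma~\ref{lm:P1Pk}(iii)). Only a few paths of $\phi$ and $\phi'$ meet $P$: say $Q_1,\ldots,Q_r\in\phi$ and $Q'_1,\ldots,Q'_{r'}\in\phi'$; all the remaining paths of $\phi$ and $\phi'$ are disjoint from $P$ and survive the exchange untouched by Lemma~\ref{lm:phi-psi}. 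Hence it suffices to analyze how the ordered subproduct $w(Q_1)\cdots w(Q_r)$ together with its primed analogue transforms under the rewiring at the segments $P^{(i)}$.

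I would then proceed by induction on $\ell$. The base case $\ell=1$ is almost trivial: $P$ lies entirely in one of $\phi,\phi'$ and the ``exchange'' relocates a single whole path between the two flows, so the identity follows directly from~\refeq{commut_in_G} applied to the reshuffling of the relocated path in the source-ordered product~\refeq{w_phi}. For the inductive step I would shorten $P$ at one of its internal turn vertices using the two-path lemma of Section~\SEC{two_paths}, which should say, in effect, that swapping two paths along a shared monochromatic subpath introduces a specific power of $q^{\pm1}$ dictated by~\refeq{commut_in_G}. The contributions coming from the interior turn points should telescope along $P$, leaving one uncompensated factor located at the endpoint pair $\pi$.

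The case analysis then becomes a bookkeeping exercise. For an $RC$-couple the endpoints $f,g$ lie on opposite sides of the circumference, so the exchange reassigns a whole $R$-to-$C$ path between $\phi$ and $\phi'$ without disturbing the relative order of path-heads in the source-indexed product; the internal factors telescope completely and $w(\phi)w(\phi')=w(\psi)w(\psi')$. For an $R$- or $C$-couple both endpoints lie on the same side of $O$, so the exchange produces exactly one uncompensated transposition of path endpoints; whether this yields $q$ or $q^{-1}$ is determined by which of $f,g$ originally labeled a $\phi$-endpoint and which a $\phi'$-endpoint, and by the directed-path rule~\refeq{commut_in_G} applied to the pair of path-heads involved.

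The main obstacle will be pinning down the exact $q$-factor contributed by a single two-path swap. The telescopic formula~\refeq{telescop} depends on the sequence of turn vertices of the path, and after splicing two paths at a shared segment the turn-vertex sequences of the rewired pair differ from those of the originals precisely at the splice; thus $w(Q_a)w(Q'_b)$ and its rewired analogue do not coincide even as multisets of generators. Reconciling the two Laurent monomials requires invoking~\refeq{commut_in_G} for the generators that appear on one side but not the other, and verifying that the discrepancy is exactly the predicted $q^{\pm 1}$. Once this local computation is settled---which is essentially the content of Section~\SEC{two_paths}---the global argument reduces to summing the exponents along $P$ and verifying the telescoping claimed above.
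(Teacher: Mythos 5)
Your setup is on the right track: decomposing the exchange path into maximal monochromatic directed subpaths (the paper's ``snakes''), isolating the paths of $\phi,\phi'$ that interact with it, and reducing everything to pairwise commutation factors from Section~\SEC{two_paths} is exactly how the paper begins. But the central claim of your plan --- that the contributions from the interior turn points (bends) ``telescope along $P$, leaving one uncompensated factor at the endpoint pair'' --- is a genuine gap, and it is precisely where the real difficulty of the theorem lives. Each bend $z_i$ shared by a white snake and a black snake contributes a factor $q^{\gamma(z_i)}$ with $\gamma(z_i)=\pm1$ determined by which of the two snakes is \emph{lower} in the planar embedding (Lemmas~\ref{lm:asP=asQ} and~\ref{lm:atP=atQ}). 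These signs are dictated by the global geometry of the exchange path, not by any adjacent-pair cancellation, and they do not telescope. The paper has to prove that their sum $\gamma_Z$ equals $1$ (for $R$- and $C$-couples) or $0$ (for $RC$-couples) by closing $Z$ up into a colored simple cycle $D$ and establishing, by a surgery induction on the number of bends (cutting the cycle along a horizontal segment through an extremal peak or pit), that $\gamma_D=\pm2$ according to the orientation of $D$ (Lemma~\ref{lm:gammaD}). Your proposed induction on $\ell$ by shortening $P$ at an internal turn does not preserve the structure needed (the truncated path no longer joins two terminals of the refined cortege), so there is no obvious inductive hypothesis to invoke.

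Two further omissions. First, paths of $\phi,\phi'$ that merely \emph{touch} $P$ at a bend (the semi-bounded and bounded ``links'') each contribute nontrivial factors $q^{\pm1}$ against the two snakes meeting them there; one must check these cancel in pairs (the paper's Proposition~\ref{pr:seg-link}), which requires tracking how the orders $\prec$ and $\prec^\ast$ of the factors in $w(\phi)w(\phi')$ versus $w(\psi)w(\psi')$ change --- merely noting that a link ``survives the exchange untouched'' is not enough, since its position in the ordered product relative to the snakes changes. Second, the two-path lemmas require the relevant endpoint coordinates to be distinct; the degenerate configurations (twin bends, vertical snakes or links, coinciding columns) need a separate perturbation-and-induction argument, which occupies a substantial part of the paper's proof and is absent from your plan.
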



\section{\Large Commutation properties of paths}  \label{sec:two_paths}

This section contains auxiliary lemmas that will be used in the proofs of
Theorems~\ref{tm:Lind} and~\ref{tm:single_exch}. They deal with special pairs
$P,Q$ of paths in an SE-graph $G=(V,E;R,C)$ and compare the weights $w(P)w(Q)$
and $w(Q)w(P)$. Similar or close statements for Cauchon graphs are given
in~\cite{cast1,cast2}, and our method of proof is somewhat similar and rather
straightforward as well.

We need some terminology, notation and conventions.

When it is not confusing, vertices, edges, paths and other objects in $G$ are
identified with their corresponding images in the plane. We assume that the
sets $R$ and $C$ lie on the coordinate rays $(0,\Rset_{\ge 0})$ and
$(\Rset_{\ge 0},0)$, respectively (then $G$ is disposed within $\Rset^2_{\ge
0}$). The coordinates of a point $v$ in $\Rset^2$ (e.g., a vertex $v$ of $G$)
are denoted as $(\alpha(v),\beta(v))$. W.l.o.g. we may assume that two vertices
$u,v\in V$ have the same first (second) coordinate if and only if they belong
to a vertical (resp. horizontal) path in $G$, in which case $u,v$ are called
\emph{V-dependent} (resp. \emph{H-dependent}). When $u,v$ are V-dependent,
i.e., $\alpha(u)=\alpha(v)$, we say that $u$ is \emph{lower} than $v$ (and $v$
is \emph{higher} than $u$) if $\beta(u)< \beta(v)$. (In this case the
commutation relation $uv=qvu$ takes place.)

Let $P$ be a path in $G$. We denote: the first and last vertices of $P$ by
$s_P$ and $t_P$, respectively; the \emph{interior} of $P$ (the set of points of
$P-\{s_P,t_P\}$ in $\Rset^2$) by $\Inter(P)$; the set of horizontal edges of
$P$ by $E^H_P$; and the projection $\{\alpha(x)\;\colon x\in P\}$ by
$\alpha(P)$. Clearly if $P$ is directed, then $\alpha(P)$ is the interval
between $\alpha(s_P)$ and $\alpha(t_P)$.

For a directed path $P$, the following are equivalent: $P$ is non-vertical;
$E^H_P\ne \emptyset$; $\alpha(s_P)\ne\alpha(t_P)$; we will refer to such a $P$
as a \emph{standard} path.

For a standard path $P$, we will take advantage from a compact expression for
the weight $w(P)$. We call a vertex $v$ of $P$ \emph{essential} if either $P$
makes a turn at $v$ (changing the direction from horizontal to vertical or
back), or $v=s_P\not\in R$ and the first edge of $P$ is horizontal, or $v=t_P$
and the last edge of $P$ is horizontal. If $u_0,u_1,\ldots,u_k$ is the sequence
of essential vertices of $P$ in the natural order, then the weight of $P$ can
be expressed as
  \begin{equation} \label{eq:wP2}
  w(P)=u_0^{\sigma_0}u_1^{\sigma_1}\ldots u_k^{\sigma_k},
  \end{equation}
where $\sigma_i=1$ if $P$ makes a \horvert-turn at $u_i$ or if $i=k$, while
$\sigma_i=-1$ if $P$ makes a \verthor-turn at $u_i$ or if $i=0$. (Compare
with~\refeq{telescop} where a path from $R$ to $C$ is considered.) It is easy
to see that if $P$ does not begin in $R$, then its essential vertices are
partitioned into H-dependent pairs.

Throughout the rest of the paper, for brevity, we denote $q^{-1}$ by $\bar q$,
and for an inner vertex $v\in W$ regarded as a generator, we may denote
$v^{-1}$ by $\bar v$.
\smallskip

Now we start stating the desired lemmas on two directed paths $P,Q$. They deal
with the case when $P$ and $Q$ are \emph{weakly intersecting}, which means that
  \begin{equation} \label{eq:pathsPQ}
P\cap Q=\{s_P,t_P\}\cap \{s_Q,t_Q\};
  \end{equation}
in particular, $\Inter(P)\cap\Inter(Q)=\emptyset$. For such $P,Q$, we say that
$P$ is \emph{lower} than $Q$ if there are points $x\in P$ and $y\in Q$ such
that $\alpha(x)=\alpha(y)$ and $\beta(x)<\beta(y)$ (this property does not
depend on the choice of $x,y$). We define the value $\varphi=\varphi(P,Q)$ by
the relation
  $$
  w(P)w(Q)=\varphi w(Q)w(P).
  $$
Obviously, $\varphi(P,Q)=1$ when $P$ or $Q$ is a V-path. In the lemmas below we
default assume that both $P,Q$ are standard.

  \begin{lemma} \label{lm:varphi=1}
Let $\{\alpha(s_P),\alpha(t_P)\}\cap\{\alpha(s_Q),\alpha(t_Q)\}\cap
\Rset_{>0}=\emptyset$. Then $\varphi(P,Q)=1$.
  \end{lemma}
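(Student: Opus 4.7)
The strategy is to exhibit $\varphi(P,Q)$ as a single power $q^\Sigma$, where $\Sigma$ is a bilinear sum over the essential vertices of $P$ and $Q$, and then show $\Sigma=0$ by a stratified case analysis along vertical and horizontal lines. Using the compact formula \refeq{wP2}, write $w(P)=u_0^{\sigma_0}\cdots u_k^{\sigma_k}$ and $w(Q)=v_0^{\tau_0}\cdots v_\ell^{\tau_\ell}$. The relations \refeq{commut_in_G} give, for any $x,y\in W$, a commutation $xy=q^{\chi(x,y)}yx$, with $\chi(x,y)=\sgn(\alpha(y)-\alpha(x))$ when $x,y$ are H-dependent, $\chi(x,y)=\sgn(\beta(y)-\beta(x))$ when V-dependent, and $\chi(x,y)=0$ otherwise; a quick check yields $u^\sigma v^\tau=q^{\sigma\tau\chi(u,v)}v^\tau u^\sigma$ for $\sigma,\tau\in\{\pm 1\}$. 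Successive exchanges then give $\varphi(P,Q)=q^\Sigma$ with $\Sigma=\sum_{i,j}\sigma_i\tau_j\chi(u_i,v_j)$, which I split as $\Sigma=\Sigma^V+\Sigma^H$ according to the type of dependence, and further stratify $\Sigma^V=\sum_{a>0}\Sigma^V_a$ and $\Sigma^H=\sum_b\Sigma^H_b$ by the common coordinate.

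Fix $a>0$. Since $P$ is southeast-monotone, $P\cap\{\alpha=a\}$ is empty, a single vertex, or a single vertical subpath; running through these configurations one reads off that the $\sigma$-sum of the essentials of $P$ on the line $\alpha=a$ equals $-1$ iff $s_P\not\in R$ and $\alpha(s_P)=a$, equals $+1$ iff $\alpha(t_P)=a$, and is $0$ otherwise (and similarly for $Q$). The hypothesis guarantees that at most one of $P,Q$ has an endpoint with $\alpha$-coordinate $a$; WLOG $Q$ has none, so $Q$'s essentials on $\{\alpha=a\}$ are either empty or form an interior vertical run of $Q$ with \horvert-turn $y^+$ on top ($\tau=+1$) and \verthor-turn $y^-$ on bottom ($\tau=-1$). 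For each essential $x$ of $P$ on the line, weak intersection forbids $x$ from lying in the geometric interval $[y^-,y^+]$ (such an $x$ would be interior to both paths, hence in $\{s_P,t_P\}\cap\{s_Q,t_Q\}$, impossible in the active configurations). Therefore $\chi(x,y^+)=\chi(x,y^-)$, $x$ contributes $0$ to $\Sigma^V_a$, and summing gives $\Sigma^V_a=0$.

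Fix $b$. The essentials of $P$ on $\{\beta=b\}$ come from the (unique, if any) H-run of $P$ at height $b$, with endpoints $L_P,M_P$; here $L_P$ fails to be essential precisely when $L_P=s_P\in R$. Assume both $P$ and $Q$ carry H-runs at $\beta=b$. Each conceivable shared-vertex pattern $L_P=L_Q$, $M_P=M_Q$, $L_P=M_Q$, $M_P=L_Q$ forces one of the endpoint coincidences $s_P=s_Q$, $t_P=t_Q$, $s_P=t_Q$, $t_P=s_Q$ respectively; the last three demand a shared endpoint-$\alpha$ in $\Rset_{>0}$ and are forbidden by the hypothesis, while the first requires $s_P=s_Q\in R$ (else again forbidden) but then the unique first eastward edge from $s_P$ would be common to $P$ and $Q$, violating weak intersection. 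Hence the two H-runs are strictly disjoint in $\alpha$; WLOG $P$'s lies west of $Q$'s. Then $\chi=+1$ on every essential pair, so $\Sigma^H_b=\bigl(\sum\sigma_i\bigr)\bigl(\sum\tau_j\bigr)$, whose second factor vanishes because $\alpha(L_Q)>\alpha(M_P)\ge0$ forces $L_Q\not\in R$. Thus $\Sigma^H_b=0$, so $\Sigma=\Sigma^V+\Sigma^H=0$ and $\varphi(P,Q)=1$.

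The main technical difficulty is the asymmetry in rule \refeq{edge_weight}, which treats the sources $R$ specially: this is exactly what makes the $\sigma$-sum on a vertical line capable of being $\pm 1$ rather than always $0$, so the hypothesis becomes indispensable for the vanishing of $\Sigma^V$. On horizontal lines the hypothesis acts more directly, by controlling shared endpoint-$\alpha$'s, and the task reduces to eliminating the four shared-vertex patterns of the two H-runs.
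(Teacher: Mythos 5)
Your proof is correct and follows essentially the same route as the paper's: both reduce $\varphi(P,Q)$ to pairwise commutations of essential vertices and cancel contributions by pairing the two opposite-signed essential vertices that an interior crossing of one path leaves on a common vertical or horizontal line, using the weak-intersection property to see that both lie on the same side of the other path's essential vertex. Your bookkeeping via $\varphi(P,Q)=q^{\Sigma}$ stratified by lines is a more systematic packaging of the paper's local pairing argument (and your horizontal-line case is in fact slightly more careful, since you explicitly verify that the western end $L_Q$ of the eastern run is essential rather than a source in $R$).
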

  \begin{proof}
~Consider an essential vertex $u$ of $P$ and an essential vertex $v$ of $Q$.
Then for any $\sigma,\sigma'\in\{1,-1\}$, we have $u^\sigma
v^{\sigma'}=v^{\sigma'} u^\sigma$ unless $u,v$ are dependent.

Suppose that $u,v$ are V-dependent. From the hypotheses of the lemma it follows
that at least one of the following is true:
$\alpha(s_P)<\alpha(u)<\alpha(t_P)$, or $\alpha(s_Q)<\alpha(v)<\alpha(t_Q)$.
For definiteness assume the former. Then there is another essential vertex $z$
of $P$ such that $\alpha(z)=\alpha(u)=\alpha(v)$. Moreover, $P$ makes a
\horvert-turn an one of $u,z$, and a \verthor-turn at the other. Since $P\cap
Q=\emptyset$ (in view of~\refeq{pathsPQ}), the vertices $u,z$ are either both
higher or both lower than $v$. Let for definiteness $u,z$ occur in this order
in $P$; then $w(P)$ contains the terms $u,\bar z$. Let $w(Q)$ contain the term
$v^\sigma$ and let $uv^\sigma=\rho v^\sigma u$, where $\sigma\in\{1,-1\}$ and
$\rho\in\{q,\bar q\}$. Then $\bar z v^\sigma= \bar \rho v^\sigma \bar z$,
implying $u\bar z v^\sigma =v^\sigma u\bar z$. Hence the contributions to
$w(P)w(Q)$ and $w(Q)w(P)$ from the pairs using terms $u,z,v$ (namely
$\{u,v^\sigma\}$ and $\{\bar z,v^\sigma\}$) are equal.

Next suppose that $u,v$ are H-dependent. One may assume that
$\alpha(u)<\alpha(v)$. Then $Q$ contains one more essential vertex $y\ne v$
with $\beta(y)=\beta(v)=\beta(u)$. Also $\alpha(u)<\alpha(v)$ and $P\cap
Q=\emptyset$ imply $\alpha(u)<\alpha(y)$. Let for definiteness
$\alpha(y)<\alpha(v)$. Then $w(P)$ contains the terms $\bar y,v$, and we can
conclude that the contributions to $w(P)w(Q)$ and $w(Q)w(P)$ from the pairs
using terms $u,y,v$ are equal (using the fact that
$\alpha(u)<\alpha(y),\alpha(v)$).

These reasonings imply $\varphi(P,Q)=1$.
  \end{proof}

  \begin{lemma} \label{lm:asP=asQ}
Let $\alpha(s_P)=\alpha(s_Q)>0$ and $\alpha(t_P)\ne\alpha(t_Q)$. Let $P$ be
lower than $Q$. Then $\varphi(P,Q)=q$.
  \end{lemma}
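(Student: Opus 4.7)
My plan is to mimic the proof of Lemma~\ref{lm:varphi=1}. Expand $w(P)=u_0^{\sigma_0}\cdots u_k^{\sigma_k}$ and $w(Q)=v_0^{\tau_0}\cdots v_\ell^{\tau_\ell}$ via the essential-vertex formula~\refeq{wP2}, and compute $\varphi(P,Q)$ by tracking the $q$-factors produced as each $v_j^{\tau_j}$ is moved across each $u_i^{\sigma_i}$. Only H- and V-dependent pairs of essentials can contribute; everything else commutes.

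For the H-dependent pairs, south--east monotonicity lets $P$ (resp.\ $Q$) meet any horizontal line $\beta=\beta^*$ in at most one horizontal segment, whose two endpoints are the only essentials on that line and carry opposite exponents $\mp 1$. Commuting any fixed essential of the other path through this pair produces mutually inverse $q$-powers, exactly as in the H-dependent step of Lemma~\ref{lm:varphi=1}, so the net contribution is trivial. The same \horvert/\verthor pairing kills every V-dependent pair on a column $\alpha^*\notin\{\alpha_0,\alpha(t_P),\alpha(t_Q)\}$, where $\alpha_0:=\alpha(s_P)=\alpha(s_Q)$, because such $\alpha^*$ is interior for both paths and the essentials of each path on that column form a matched \horvert/\verthor pair of opposite exponents.

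On the column $\alpha(t_P)$ only $P$ carries an unpaired essential (either $t_P$ itself, if the last edge of $P$ is horizontal, or the \horvert-turn above $t_P$). The hypothesis $\alpha(t_P)\neq\alpha(t_Q)$ guarantees that $\alpha(t_P)$ is either interior for $Q$ or lies outside $\alpha(Q)$; so the essentials of $Q$ on this column either are empty or again form a \horvert/\verthor pair, killing the $P$-contribution. The symmetric argument handles $\alpha(t_Q)$. Thus the only surviving $q$-factor comes from the shared start column $\alpha_0$, on which $P$ has a unique essential $p$ with $\sigma=-1$ (namely $s_P$ if the first edge of $P$ is horizontal, otherwise the \verthor-turn at the bottom of $P$'s initial vertical segment), and analogously $Q$ has a unique essential $r$ with $\tau=-1$. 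The weak intersection~\refeq{pathsPQ} together with ``$P$ lower than $Q$'' forces the vertical interval of $P$ on $\alpha_0$ to lie strictly below that of $Q$, so $p$ lies strictly below $r$. Then~\refeq{commut_in_G}(ii), applied to the directed vertical path from $r$ down to $p$, gives $pr=qrp$, which inverts to $\bar p\,\bar r=q\,\bar r\,\bar p$. Combined with the cancellations above, this yields $w(P)w(Q)=q\,w(Q)w(P)$, as required.

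The main obstacle I anticipate is the bookkeeping at the endpoint columns $\alpha(t_P),\alpha(t_Q)$ and the degenerate configurations on $\alpha_0$ (for example $s_P=s_Q$, or $p$ and $r$ threatening to coincide at a shared endpoint). Each such subcase must be ruled out or accommodated by combining the weak-intersection hypothesis with $\alpha(t_P)\neq\alpha(t_Q)$ and ``$P$ lower than $Q$'', but the underlying cancellation mechanism remains exactly that of Lemma~\ref{lm:varphi=1}.
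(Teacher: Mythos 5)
Your argument is correct and is essentially the paper's own proof: the paper likewise reduces everything, via the cancellation mechanism of Lemma~\ref{lm:varphi=1}, to the single surviving pair consisting of the first essential vertices of $P$ and $Q$ on the common column $\alpha(s_P)=\alpha(s_Q)$ (your $p$ and $r$), notes that weak intersection and ``$P$ lower than $Q$'' force $\beta(p)<\beta(r)$ strictly, and concludes $\bar p\,\bar r=q\,\bar r\,\bar p$. Your write-up merely makes the column-by-column bookkeeping (and the degenerate configuration $s_P=s_Q$) more explicit than the paper's ``arguing as in the above proof''.
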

  \begin{proof}
~Let $u$ and $v$ be the first essential vertices in $P$ and $Q$, respectively.
Then $\alpha(u)=\alpha(s_P)=\alpha(s_Q)=\alpha(v)$ (in view of
$\alpha(s_P)=\alpha(s_Q)>0$). Since $P$ is lower than $Q$, we have
$\beta(u)\le\beta(v)$. Moreover, this inequality is strong (since
$\beta(u)=\beta(v)$ is impossible in view of~\refeq{pathsPQ} and the obvious
fact that $u,v$ are the tails of first H-edges in $P,Q$, respectively).

Now arguing as in the above proof, we can conclude that the discrepancy between
$w(P)w(Q)$ and $w(Q)w(P)$ can arise only due to swapping the vertices $u,v$.
Since $u$ gives the term $\bar u$ in $w(P)$, and $v$ the term $\bar v$ in
$w(Q)$, the contribution from these vertices to $w(P)w(Q)$ and $w(Q)w(P)$ are
expressed as $\bar u\bar v$ and $\bar v \bar u$, respectively. Since
$\beta(u)<\beta(v)$, we have $\bar u \bar v=q\bar v \bar u$, and the result
follows.
  \end{proof}

  \begin{lemma} \label{lm:atP=atQ}
Let $\alpha(t_P)=\alpha(t_Q)$ and let either $\alpha(s_P)\ne\alpha(s_Q)$ or
$\alpha(s_P)=\alpha(s_Q)=0$. Let $P$ be lower than $Q$. Then $\varphi(P,Q)=q$.
  \end{lemma}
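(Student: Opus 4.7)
The plan is to mirror the proof of Lemma~\ref{lm:asP=asQ}, replacing the first essential vertices of $P,Q$ by their \emph{last} essential vertices. Let $u$ be the last essential vertex of $P$ and $v$ the last essential vertex of $Q$. A brief case check on the last edge shows that $\alpha(u)=\alpha(t_P)$ regardless: if the last edge of $P$ is horizontal then $u=t_P$, and if it is vertical then $u$ is the \horvert-turn at which $P$ first reaches the vertical line $\alpha=\alpha(t_P)$ before descending to $t_P$. The same holds for $v$, so by hypothesis $\alpha(u)=\alpha(t_P)=\alpha(t_Q)=\alpha(v)$, which makes $u$ and $v$ V-dependent. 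Dual to the situation in Lemma~\ref{lm:asP=asQ}, formula~\refeq{wP2} shows that $u$ and $v$ now appear with exponent $+1$ in $w(P)$ and $w(Q)$, respectively.

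Next I would verify that $u\ne v$, so that ``$P$ is lower than $Q$'' yields the strict inequality $\beta(u)<\beta(v)$. Since $\alpha$ is nondecreasing along a directed path in an SE-graph, $P$ visits the line $\alpha=\alpha(t_P)$ only on its terminal (possibly trivial) vertical segment, so $u$ is the unique essential of $P$ with $\alpha$-coordinate $\alpha(t_P)$, and similarly for $v$. If $u=v$ held then $u=t_P=t_Q$, both last edges would have to be horizontal, and the unique H-edge entering this common vertex from the left would belong to both paths; the weak-intersection condition~\refeq{pathsPQ} would then force its tail to be a shared endpoint of $P$ and $Q$, contradicting the hypothesis that $\alpha(s_P)\ne\alpha(s_Q)$ or $\alpha(s_P)=\alpha(s_Q)=0$.

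The heart of the argument is then the same pair-cancellation analysis as in Lemma~\ref{lm:varphi=1}. For every dependent pair $(x,y)$ of essentials with $x\in P$, $y\in Q$ other than $(u,v)$, the plan is to exhibit either a V- or H-dependent partner $z\in P$ of $x$ with opposite exponent in $w(P)$, or such a partner of $y$ in $Q$, so that the net contributions to $w(P)w(Q)$ and $w(Q)w(P)$ coincide, exactly as in that lemma. The hypothesis on the starting coordinates is precisely what excludes having any ``singleton'' essentials on the starting lines $\alpha=\alpha(s_P)$ or $\alpha=\alpha(s_Q)$ without canceling partners. The main obstacle I foresee is this bookkeeping, especially in the subcase $0<\alpha(s_P)\ne\alpha(s_Q)$ where each of these lines hosts a unique essential of its own path: one must show that its dependent counterpart in the other path appears in a canceling pair. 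Once this is in place, every remaining commutation is trivial, and the factor in $w(P)w(Q)=\varphi(P,Q)\,w(Q)w(P)$ comes solely from the swap of $u$ past $v$; since $\beta(u)<\beta(v)$ and $u,v$ are V-dependent, we have $uv=qvu$, whence $\varphi(P,Q)=q$.
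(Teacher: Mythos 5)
Your proof is correct and follows essentially the same route as the paper's: both identify the last essential vertices $u,v$ of $P,Q$, note that they are V-dependent with exponent $+1$ in $w(P),w(Q)$ and satisfy $\beta(u)<\beta(v)$, dispose of all other dependent pairs by the cancellation argument of Lemma~\ref{lm:varphi=1} (with the hypothesis on $\alpha(s_P),\alpha(s_Q)$ ensuring that any lone essential vertex on a starting line meets a partnered pair in the other path), and read off $\varphi(P,Q)=q$ from $uv=qvu$. Your extra care in ruling out $u=v$ goes slightly beyond the paper's one-line justification and is welcome.
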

 \begin{proof}
~We argue in spirit of the proof of Lemma~\ref{lm:asP=asQ}. Let $u$ and $v$ be
the last essential vertices in $P$ and $Q$, respectively. Then
$\alpha(u)=\alpha(t_P)=\alpha(t_Q)=\alpha(v)$. Also $\beta(u)<\beta(v)$ (since
$P$ is lower than $Q$, and in view of~\refeq{pathsPQ} and the fact that $u,v$
are the heads of H-edges in $P,Q$, respectively). The condition on
$\alpha(s_P)$ and $\alpha(s_Q)$ imply that the discrepancy between $w(P)w(Q)$
and $w(Q)w(P)$ can arise only due to swapping the vertices $u,v$ (using
reasonings as in the proof of Lemma~\ref{lm:varphi=1}). Observe that $w(P)$
contains the term $u$, and $w(Q)$ the term $v$. So the generators $u,v$
contribute $uv$ to $w(P)w(Q)$, and $vu$ to $w(Q)w(P)$. Now $\beta(u)<\beta(v)$
implies $uv=qvu$, and the result follows.
 \end{proof}

  \begin{lemma} \label{lm:1atP=asQ}
Let $\alpha(t_P)=\alpha(s_Q)$ and $\beta(t_P)\ge\beta(s_Q)$. Then
$\varphi(P,Q)=q$.
  \end{lemma}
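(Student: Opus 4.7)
The plan is to follow the template of Lemmas~\ref{lm:varphi=1}--\ref{lm:atP=atQ}: isolate the essential vertices whose commutation produces the factor $q$, and show that all other cross-pairs contribute trivially. Let $u_0,\dots,u_k$ and $v_0,\dots,v_\ell$ be the essential vertex sequences of $P$ and $Q$ with signs $\sigma_i,\tau_j$ from~\refeq{wP2}, and set $u:=u_k$ and $v:=v_0$. Since $\alpha$ is non-decreasing along any S/E-directed standard path, $u$ is the unique essential of $P$ at $\alpha=\alpha(t_P)$, satisfies $\beta(u)\ge\beta(t_P)$ (equality iff the last edge of $P$ is horizontal, so $u=t_P$), and by~\refeq{wP2} has $\sigma_k=+1$; symmetrically, $v$ is the unique essential of $Q$ at $\alpha=\alpha(s_Q)=\alpha(u)$, with $\beta(v)\le\beta(s_Q)$ and $\tau_0=-1$.

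The next step is to show that no cross-pair $(u_i,v_j)$ other than $(u,v)$ contributes nontrivially. For V-dependence we need $\alpha(u_i)=\alpha(v_j)$; monotonicity of $\alpha$ along each path gives $\alpha(u_i)<\alpha(t_P)=\alpha(s_Q)$ for $i<k$ and $\alpha(v_j)>\alpha(s_Q)$ for $j>0$, leaving only $(u,v)$. For H-dependence we need $\beta(u_i)=\beta(v_j)$; the chain $\beta(u_i)\ge\beta(u_k)\ge\beta(t_P)\ge\beta(s_Q)\ge\beta(v_0)\ge\beta(v_j)$ must then collapse to equalities throughout, which happens only in the degenerate configuration where $u=t_P=s_Q=v$ and both abutting edges are horizontal. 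Call this \emph{Case~B} and the complementary situation \emph{Case~A}.

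In Case~A ($u\ne v$), we have $\beta(u)>\beta(v)$, so by the WLOG convention identifying $\alpha$-equal vertices with a common vertical path of $G$, there is a directed vertical path from $u$ down to $v$. Hence $vu=quv$ by~\refeq{commut_in_G}(ii), equivalently $u\bar v=q\bar v u$, and this single swap yields $\varphi(P,Q)=q$. In Case~B, with $u=v=:w$, the $u$-$v$ swap itself is trivial but three H-dependent pairs on the common horizontal $\beta=\beta(w)$ come into play: invoking~\refeq{commut_in_G}(i) with signs $\sigma_{k-1}=\tau_0=-1$ and $\sigma_k=\tau_1=+1$, the pairs $(u_{k-1},w),(u_{k-1},v_1),(w,v_1)$ contribute factors $q,q^{-1},q$ respectively, whose product is again $q$.

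The main obstacle is Case~B: one must verify that the three H-dependent contributions conspire to exactly $q$ (rather than, say, $q^{\pm 3}$), and handle degenerate configurations where $u_{k-1}$ or $v_1$ is absent (e.g., when $P$ or $Q$ consists of a single horizontal edge touching a source or sink). A possible shortcut is to pre-extend $P$ by the (weight-$1$) vertical path from $t_P$ down to $s_Q$, reducing to $t_P=s_Q$; if this extension is nontrivial, the last edge of the extended path is vertical, which forces $u\ne v$ and places us in Case~A; but if $t_P=s_Q$ already, Case~B must still be addressed directly.
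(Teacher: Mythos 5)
Your argument is correct and follows essentially the same route as the paper's: your Case~A is its case~(a), and your Case~B with $u_{k-1}$ present or absent reproduces its cases~(c) and~(b) respectively, with the identical local computation $q\cdot q^{-1}\cdot q=q$ (the paper merely organizes the bookkeeping by first splitting $Q$ at its second essential vertex and discarding the tail via Lemma~\ref{lm:varphi=1}). The one loose end you flag is immediate: if $u_{k-1}$ is absent only the pair $(w,v_1)$ survives, giving $q$, and $v_1$ can never be absent since $\alpha(s_Q)>0$ forces $s_Q\notin R$, so $Q$ has at least two essential vertices.
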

 \begin{proof}
~Let $u$ be the last essential vertex in $P$ and let $v,z$ be the first and
second essential vertices of $Q$, respectively (note that $z$ exists because of
$0<\alpha(s_Q)<\alpha(t_Q)$). Then
$\alpha(u)=\alpha(t_P)=\alpha(s_Q)=\alpha(v)<\alpha(z)$. Also
$\beta(u)\ge\beta(t_P) \ge \beta(s_Q)\ge \beta(v)=\beta(z)$. Let $Q'$ and $Q''$
be the parts of $Q$ from $s_Q$ to $z$ and from $z$ to $t_Q$, respectively. Then
$\alpha(P)\cap\alpha(Q'')=\emptyset$, implying $\varphi_{P,Q''}=1$ (using
Lemma~\ref{lm:varphi=1} when $Q''$ is standard). Hence
$\varphi_{P,Q}=\varphi_{P,Q'}$.

To compute $\varphi_{P,Q'}$, consider three possible cases.

(a) Let $\beta(u)>\beta(v)$. Then $u,v$ form the unique pair of dependent
essential vertices for $P,Q'$. Note that $w(P)$ contains the term $u$, and
$w(Q')$ contains the term $\bar v$. Since $\beta(u)>\beta(v)$, we have $u\bar
v=q\bar vu$, implying $\varphi_{P,Q'}=q$.

(b) Let $u=v$ and let $u$ be the unique essential vertex of $P$ (in other
words, $P$ is an H-path with $s_P\in R$). Note that $u=v$ and
$\beta(t_P)\ge\beta(s_Q)$ imply $t_Q=u=v=s_P$. Also $\alpha(u)<\alpha(z)$ and
$\beta(u)=\beta(z)$; so $u,z$ are dependent essential vertices for $P,Q'$ and
$uz=qzu$. We have $w(P)=u$ and $w(Q')=\bar u z$ (in view of $u=v$). Then $u\bar
u z=\bar u uz=q\bar u z u$ gives $\varphi_{P,Q'}=q$.

(c) Now let $u=v$ and let $y$ be the essential vertex of $P$ preceding $u$.
Then $t_Q=u=v=s_P$, ~$\beta(y)=\beta(u)=\beta(z)$, and
$\alpha(y)<\alpha(u)<\alpha(z)$. Hence $y,u,z$ are dependent, $w(P)$ contains
$\bar yu$, and $w(Q')=\bar uz$. We have
  $$
  \bar y u\bar u z= \bar y\bar u uz=(q\bar u\bar y)(qzu)
          =q^2 \bar u(\bar q z\bar y) u=q\bar u z\bar y u,
  $$
again obtaining $\varphi_{P,Q'}=q$.
 \end{proof}

   \begin{lemma} \label{lm:2atP=asQ}
Let $\alpha(t_P)=\alpha(s_Q)$ and  $\beta(t_P)<\beta(s_Q)$. Then
$\varphi(P,Q)=\bar q$.
  \end{lemma}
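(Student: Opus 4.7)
My plan is to parallel the proof of Lemma~\ref{lm:1atP=asQ} essentially verbatim, with only the geometric input reversed. Let $u$ be the last essential vertex of $P$ and let $v, z$ be the first and second essential vertices of $Q$ (the existence of $z$ following from $0 < \alpha(s_Q) = \alpha(t_P) < \alpha(t_Q)$, the last inequality by standardness of $Q$). Then $\alpha(u) = \alpha(t_P) = \alpha(s_Q) = \alpha(v) < \alpha(z)$. Splitting $Q = Q' \circ Q''$ at $z$, the inequality $\alpha(z) > \alpha(t_P)$ makes Lemma~\ref{lm:varphi=1} applicable to the pair $(P, Q'')$, giving $\varphi(P, Q'') = 1$, hence $\varphi(P, Q) = \varphi(P, Q')$.

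The crucial step, and the main divergence from the proof of Lemma~\ref{lm:1atP=asQ}, is to establish $\beta(u) < \beta(v)$. The vertical segments of $P$ and $Q$ lying on the common line $\alpha = \alpha(t_P)$ are $[t_P, u]$ and $[v, s_Q]$, with $\beta$-ranges $[\beta(t_P), \beta(u)]$ and $[\beta(v), \beta(s_Q)]$, respectively. By~\refeq{pathsPQ} their interiors are disjoint, so one of these two intervals lies weakly below the other; the hypothesis $\beta(t_P) < \beta(s_Q)$ rules out $\beta(s_Q) \le \beta(t_P)$, forcing $\beta(u) \le \beta(v)$. A short check that $u = v$ is impossible (it would force $u \in \{s_P, t_P\} \cap \{s_Q, t_Q\}$, but each of $t_P = s_Q$, $t_P = t_Q$, $s_P = s_Q$, $s_P = t_Q$ is excluded by our $\alpha$- or $\beta$-constraints) upgrades this to the strict $\beta(u) < \beta(v)$. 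In particular, the analogs of cases (b) and (c) of Lemma~\ref{lm:1atP=asQ} do not occur here.

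The remaining argument now runs as in case (a) of Lemma~\ref{lm:1atP=asQ}. The pair $\{u, v\}$ is the unique pair of V-dependent essential vertices for $(P, Q')$: no other essential of $P$ has $\alpha = \alpha(t_P)$, and $\alpha(z) > \alpha(t_P)$ rules out V-dependence with $z$. For any essential $y$ of $P$ H-dependent with $v$ or $z$, the H-mate $y'$ of $y$ in $P$ satisfies $\alpha(y), \alpha(y') < \alpha(v) = \alpha(z)$, so $y, y'$ lie on the same (left) side of $v$ and of $z$, and the mate-cancellation trick from the proof of Lemma~\ref{lm:varphi=1} kills their contribution. Since $w(P)$ ends in the factor $u$ and $w(Q')$ begins with $\bar v$, and since $u$ lower than $v$ gives the relation $uv = qvu$, hence $u\bar v = \bar q \bar v u$, we conclude $\varphi(P, Q') = \bar q$ and therefore $\varphi(P, Q) = \bar q$.

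The main obstacle is the geometric argument yielding $\beta(u) < \beta(v)$, which replaces the straightforward chain $\beta(u) \ge \beta(t_P) \ge \beta(s_Q) \ge \beta(v)$ enjoyed in Lemma~\ref{lm:1atP=asQ}; everything else transfers routinely.
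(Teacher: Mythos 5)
Your proof is correct and follows essentially the same route as the paper's (which is only three sentences long): identify the last essential vertex $u$ of $P$ and the first essential vertex $v$ of $Q$, derive $\beta(u)<\beta(v)$ from $\beta(t_P)<\beta(s_Q)$ together with \refeq{pathsPQ}, and conclude from $u\bar v=\bar q\,\bar v u$ that $\varphi(P,Q)=\bar q$. Your version merely spells out the details the paper leaves implicit (the $Q'\circ Q''$ splitting and the cancellation of all other dependent pairs, borrowed from the proof of Lemma~\ref{lm:1atP=asQ}); the only blemish is the stray equality ``$\alpha(v)=\alpha(z)$'', which should read $\alpha(v)<\alpha(z)$ and does not affect the argument.
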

 \begin{proof}
~Let $u$ be the last essential vertex of $P$, and $v$ the first essential
vertex of $Q$. Then $\alpha(u)=\alpha(t_P)=\alpha(s_Q)=\alpha(v)$, and
$\beta(t_P)<\beta(s_Q)$ together with~\refeq{pathsPQ} implies
$\beta(u)<\beta(v)$. Also $w(P)$ contains $u$ and $w(Q)$ contains $\bar v$. Now
$u\bar v=\bar q \bar v u$ implies $\varphi_{P,Q}=\bar q$.
 \end{proof}

 \Xcomment{
  \begin{lemma} \label{lm:astP=astQ}
Let $\alpha(s_P)=\alpha(s_Q)> 0$ and $\alpha(t_P)=\alpha(t_Q)$. Let $P$ be
lower than $Q$. Then $\varphi(P,Q)=q^2$.
  \end{lemma}
 \begin{proof}
~In fact, this is a combination of Lemmas~\ref{lm:asP=asQ}
and~\ref{lm:atP=atQ}. More precisely, let $u,v$ (resp. $u',v'$) be the first
(resp. last) essential vertices of $P$ and $Q$, respectively. Then
$\alpha(u)=\alpha(v)$, ~$\beta(u)<\beta(v)$, ~$\alpha(u')=\alpha(v')$, and
$\beta(u')<\beta(v')$. Also $w(P)$ contains $\bar u,u'$ and $w(Q)$ contains
$\bar v,v'$. Now $\bar u\bar v=q\bar v\bar u$ and $u'v'=qv'u'$ imply
$\varphi(P,Q)=q^2$.
 \end{proof}
  }


\section{\Large Proof of Theorem~\ref{tm:Lind}}  \label{sec:q_determ}

It can be conducted as a direct extension of the proof of a similar
Lindstr\"om's type result given by Casteels~\cite[Sec.~4]{cast1} for Cauchon
graphs. To make our description more self-contained, we outline the main
ingredients of the proof, leaving the details where needed to the reader.

Let $(I|J)\in\Escr^{m,n}$, $I=\{i(1)<\cdots <i(k)\}$ and
$J=\{j(1)<\cdots<j(k)\}$. Recall that an $(I|J)$-flow in an SE-graph $G$ (with
$m$ sources and $n$ sinks) consists of pairwise disjoint paths $P_1,\ldots,
P_k$ from the source set $R_I=\{r_{i(1)},\ldots,r_{i(k)}\}$ to the sink set
$C_J=\{c_{j(1)},\ldots,c_{j(k)}\}$, and (due to the planarity of $G$) we may
assume that each $P_d$ begins at $r_{i(d)}$ and ends at $c_{j(d)}$. Besides, we
are forced to deal with an arbitrary \emph{path system} $\Pscr=(P_1,\ldots,
P_k)$ in which for $i=1,\ldots,k$, ~$P_d$ is a directed path in $G$ beginning
at $r_{i(d)}$ and ending at $c_{j(\sigma(d))}$, where $\sigma(1),
\ldots,\sigma(k)$ are different, i.e., $\sigma=\sigma_\Pscr$ is a permutation
on $[k]$. (In particular, $\sigma_\Pscr$ is identical if $\Pscr$ is a flow.)

We naturally partition the set of all path systems for $G$ and $(I|J)$ into the
set $\Phi=\Phi_G(I|J)$ of $(I|J)$-flows and the rest $\Psi=\Psi_G(I|J)$
(consisting of those path systems that contain intersecting paths). The
following property easily follows from the planarity of $G$
(cf.~\cite[Lemma~4.2]{cast1}):
   \begin{numitem1} \label{eq:PiPi+1}
For any $\Pscr=(P_1,\ldots,P_k)\in\Psi$, there exist two \emph{consecutive}
intersecting paths $P_d,P_{d+1}$.
  \end{numitem1}

The $q$-\emph{sign} of a permutation $\sigma$ is defined by
   $$
   \sgn_q(\sigma):=(-q)^{\ell(\sigma)},
   $$
where $\ell(\sigma)$ is the length of $\sigma$ (see Sect.~\SEC{prelim}).

Now we start computing the $q$-minor $[I|J]$ of the matrix $\Path_G$ with the
following chain of equalities:
   \begin{eqnarray*}
   [I|J]&=& \sum\nolimits_{\sigma\in S_k} \sgn_q(\sigma)
        \left( \prod\nolimits_{d=1}^{k} \Path_G(i(d)|j(\sigma(d))\right) \\
   &=& \sum\nolimits_{\sigma\in S_k} \sgn_q(\sigma)
       \left( \prod\nolimits_{d=1}^{k} \left(
                     \sum(w(P)\;\colon P\in \Pscr_G(i(d)|j(\sigma(d))\right)\right) \\
   &=&\sum(\sgn_q(\sigma_\Pscr)w(\Pscr)\;\colon \Pscr\in\Phi\cup\Psi) \\
   &=&\sum(w(\Pscr)\;\colon \Pscr\in\Phi)
                        +\sum(\sgn_q(\sigma_\Pscr)w(\Pscr)\;\colon
                        \Pscr\in\Psi).
   \end{eqnarray*}

Thus, we have to show that the second sum in the last row is zero. It will
follow from the existence of an involution $\eta:\Psi\to\Psi$ without fixed
points such that for each $\Pscr\in\Psi$,
  \begin{equation} \label{eq:invol}
  \sgn_q(\sigma_\Pscr)w(\Pscr)=-\sgn_q(\sigma_{\eta(\Pscr)}) w(\eta(\Pscr)).
  \end{equation}

To construct the desired $\eta$, consider $\Pscr=(P_1,\ldots,P_k)\in\Psi$, take
the minimal $i$ such that $P_i$ and $P_{i+1}$ meet, take the last common vertex
$v$ of these paths, represent $P_i$ as the concatenation $K\circ L$, and
$P_{i+1}$ as $K'\circ L'$, so that $t_K=t_{K'}=s_L=s_{L'}=v$, and exchange the
portions $L,L'$ of these paths, forming $Q_i:=K\circ L'$ and $Q_{i+1}:=K'\circ
L$. Then we assign $\eta(\Pscr)$ to be obtained from $\Pscr$ by replacing
$P_i,P_{i+1}$ by $Q_i,Q_{i+1}$. It is routine to check that $\eta$ is indeed an
involution (with $\eta(\Pscr)\ne\Pscr$) and that
   \begin{equation} \label{eq:ell+1}
  \ell(\sigma_{\eta(\Pscr)})=\ell(\sigma_\Pscr)+1,
  \end{equation}
assuming w.l.o.g. that $\sigma(i)<\sigma(i+1)$. On the other hand, applying to
the paths $K,L,K',L'$ corresponding lemmas from Sect.~\SEC{two_paths} (among
Lemmas~\ref{lm:asP=asQ}--\ref{lm:1atP=asQ}), one can obtain
   \begin{multline*} 
   \quad w(P_i)w(P_{i+1})=w(K)w(L)w(K')w(L')=qw(K)w(L)w(L')w(K')  \\
  =q^2 w(K)w(L')w(L)w(K')=qw(K)w(L')w(K')w(L)=qw(Q_i)w(Q_{i+1}),\quad
   \end{multline*}
whence $w(\Pscr)=qw(\eta(\Pscr))$. This together with~\refeq{ell+1} gives
   %
   \begin{equation*}
  \sgn_q(\sigma_\Pscr)w(\Pscr)+\sgn_q(\sigma_{\eta(\Pscr)}) w(\eta(\Pscr))
  =(-q)^{\ell(\sigma_\Pscr)}q w(\eta(\Pscr))+(-q)^{\ell(\sigma_\Pscr)+1}
  w(\eta(\Pscr))   =0,
  \end{equation*}
yielding~\refeq{invol}, and the result follows. \hfill\qed


\section{\Large Proof of Theorem~\ref{tm:single_exch}}  \label{sec:exchange}

Using notation as in the hypotheses of this theorem, we first consider the case
when

\begin{itemize}
\item[(C):] $\pi=\{f,g\}$ is a $C$-couple in $M(\phi,\phi')$ with $f<g$ and $f\in
J$.
  \end{itemize}
(Then $f\in\Jw$ and $g\in\Jb$.) We have to prove that
  \begin{equation} \label{eq:caseC}
  w(\phi)w(\phi')=qw(\psi)w(\psi')
  \end{equation}
The proof is given throughout Sects.~\SSEC{seglink}--\SSEC{degenerate}. The
other possible cases in Theorem~\ref{tm:single_exch} will be discussed in
Sect.~\SSEC{othercases}.

 \subsection{Snakes and links.} \label{ssec:seglink}
Let $Z$ be the exchange path determined by $\pi$ (i.e., $Z=P(\pi)$ in notation
of Sect.~\SEC{double}). It connects the sinks $c_f$ and $c_g$, which may be
regarded as the first and last vertices of $Z$, respectively. Then $Z$ is
representable as a concatenation $Z=\bar Z_1\circ Z_2\circ\bar Z_3\circ \ldots
\circ \bar Z_{k-1}\circ Z_k$, where $k$ is even, each $Z_i$ with $i$ odd (even)
is a directed path concerning $\phi$ (resp. $\phi'$), and $\bar Z_i$ stands for
the path reversed to $Z_i$. More precisely, let $z_0:=c_f$, ~$z_k:=c_g$, and
for $i=1,\ldots,k-1$, denote by $z_i$ the common endvertex of $Z_i$ and
$Z_{i+1}$. Then each $Z_i$ with $i$ odd is a directed path from $z_i$ to
$z_{i-1}$ in $\langle E_\phi- E_{\phi'}\rangle$, while each $Z_i$ with $i$ even
is a directed path from $z_{i-1}$ to $z_i$ in $\langle E_{\phi'}-
E_{\phi}\rangle$.

We refer to $Z_i$ with $i$ odd (even) as a \emph{white} (resp. \emph{black})
\emph{snake}.

Also we refer to the vertices $z_1,\ldots,z_{k-1}$ as the \emph{bends} of $Z$.
A bend $z_i$ is called a \emph{peak} (a \emph{pit}) if both path $Z_i,Z_{i+1}$
leave (resp. enter) $z_i$; then $z_1,z_3,\ldots,z_{k-1}$ are the peaks, and
$z_2,z_4,\ldots,z_{k-2}$ are the pits. Note that some peak $z_i$ and pit $z_j$
may coincide; in this case we say that $z_i,z_j$ are \emph{twins}.

The rests of flows $\phi$ and $\phi'$ consist of directed paths that we call
\emph{white} and \emph{black links}, respectively. More precisely, the white
(black) links correspond to the connected components of the subgraph $\phi$
(resp. $\phi'$) from which the interiors of all snakes are removed. So a link
connects either (a) a source and a sink (being a component of $\phi$ or
$\phi'$), or (b) a source and a pit, or (c) a peak and a sink, or (d) a pit and
a peak. We say that a link is \emph{unbounded} in case (a), \emph{semi-bounded}
in cases (b),(c), and \emph{bounded} in case (d). Note that
  \begin{numitem1} \label{eq:4paths}
a bend $z_i$ occurs as an endvertex in exactly four paths among snakes and
links, namely: either in two snakes and two links (of different colors), or in
four snakes $Z_i,Z_{i+1},Z_j,Z_{j+1}$ (when $z_i,z_j$ are twins).
  \end{numitem1}

We denote the sets of snakes and links (for $\phi,\phi',\pi$) by $\Sscr$ and
$\Lscr$, respectively; the corresponding subsets of white and black elements of
these sets are denoted as $\Sscr^\circ,\; \Sscr^\bullet,\; \Lscr^\circ,\;
\Lscr^\bullet$.

The picture below illustrates an example. Here $k=10$, the bends
$z_1,\ldots,z_9$ are marked by squares, the white and black snakes are drawn by
thin and thick solid zigzag lines, respectively, the white links
($L_1,\ldots,L_7$) by short-dotted lines, and the black links
($M_1,\ldots,M_6$) by long-dotted lines.

\vspace{0cm}
\begin{center}
\includegraphics{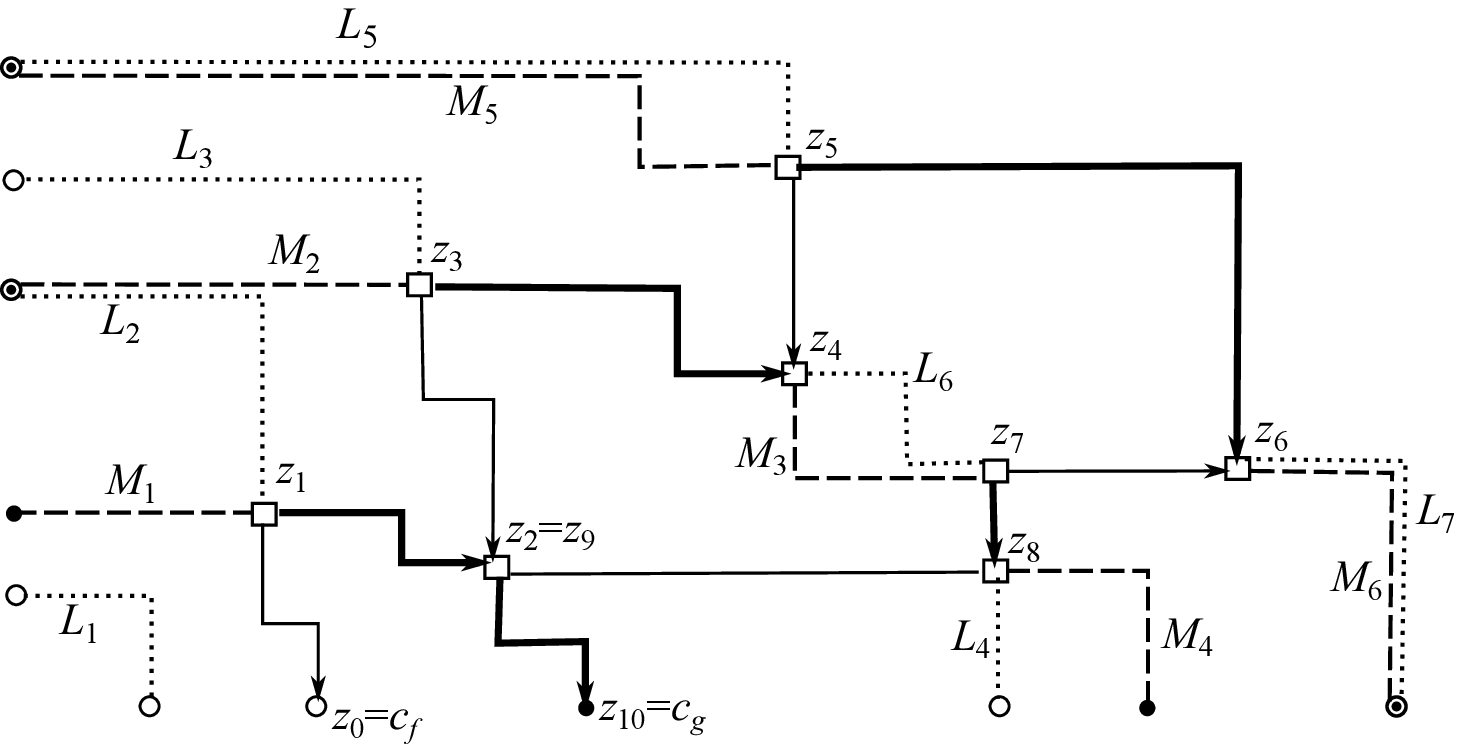}
\end{center}
\vspace{0cm}

The weight $w(\phi)w(\phi')$ of the double flow $(\phi,\phi')$ can be written
as the corresponding ordered product of the weights of snakes and links; let
$\Nscr$ be the string (sequence) of snakes and links in this product. The
weight of the double flow $(\psi,\psi')$ uses a string consisting of the same
snakes and links but taken in another order; we denote this string by
$\Nscr^\ast$.

We say that two elements among snakes and links are \emph{invariant} if they
occur in the same order in $\Nscr$ and $\Nscr^\ast$, and \emph{permuting}
otherwise. In particular, two links of different colors are invariant, whereas
two snakes of different colors are always permitting.

For example, observe that the string $\Nscr$ for the above illustration is
viewed as
  $$
L_1L_2Z_1L_3Z_3Z_9L_4L_5Z_5L_6Z_7L_7M_1Z_2Z_{10}M_2Z_4M_3Z_8M_4M_5Z_6M_6,
  $$
whereas $\Nscr^\ast$ is viewed as
  $$
L_1L_2Z_2Z_{10}L_3Z_4L_6Z_8L_4L_5Z_6L_7M_1Z_1M_2Z_3Z_9M_4M_5Z_5M_3Z_7M_6.
  $$

For $A,B\in\Sscr\cup\Lscr$, we write $A\prec B$ (resp. $A\prec^\ast B$) if $A$
occurs in $\Nscr$ (resp. in $\Nscr^\ast$) earlier than $B$. We define
$\varphi_{A,B}=\varphi_{B,A}:=1$ if $A,B$ are invariant, and define
$\varphi_{A,B}=\varphi_{B,A}$ by the relation
   \begin{equation} \label{eq:phiAB}
   w(A)w(B)=\varphi_{A,B} w(B)w(A).
   \end{equation}
if $A,B$ are permuting and $A\prec B$. Note that $\varphi_{A,B}$ is defined
somewhat differently than $\varphi(P,Q)$ in Sect.~\SEC{two_paths}.

For $A,B\in\Sscr\cup\Lscr$, we may use notation $(A,B)$ when $A,B$ are
permuting and $A\prec B$ (and may write $\{A,B\}$ when their orders by $\prec$
and $\prec^\ast$ are not important for us).

Our goal is to prove that in case~(C),
  \begin{equation}\label{eq:Pi=q}
  \prod(\varphi_{A,B}\;\colon A,B\in \Sscr\cup\Lscr)=q,
  \end{equation}
whence~\refeq{caseC} will immediately follow.

We first consider the \emph{non-degenerate} case. This means the following
restriction:
  \begin{numitem1} \label{eq:nondegenerate}
all coordinates $\alpha(z_1),\ldots,\alpha(z_{k-1}),
\alpha(c_1),\ldots,\alpha(c_n)$ of bends and sinks are different.
  \end{numitem1}

The proof of~\refeq{Pi=q} subject to~\refeq{nondegenerate} will consist of
three stages I, II, III where we compute the total contribution from the pairs
of links, the pairs of snakes, and the pairs consisting of one snake and one
link, respectively. As a consequence, the following three results will be
obtained (implying~\refeq{Pi=q}).
  \begin{prop} \label{pr:link-link}
In case~\refeq{nondegenerate}, the product $\varphi^I$ of the values
$\varphi_{A,B}$ over links $A,B\in\Lscr$ is equal to 1.
  \end{prop}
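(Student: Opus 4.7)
The plan is to show the stronger statement that $\varphi_{A,B}=1$ for \emph{each} individual pair $A,B\in\Lscr$, so that the product $\varphi^I$ collapses trivially to $1$. Pairs of links of different color are invariant by definition (noted just after the introduction of $\varphi_{A,B}$) and so contribute $1$. Hence the task reduces to same-color pairs, and by symmetry it suffices to treat two distinct white links $A,B\in\Lscr^\circ$.

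First I would establish that distinct white links are weakly intersecting. Indeed, by definition the white links are the connected components of $\phi$ after removing the open interiors of all white snakes; since $\phi$ itself consists of pairwise vertex-disjoint paths, any two distinct white links are actually vertex-disjoint. In particular $A\cap B=\{s_A,t_A\}\cap\{s_B,t_B\}=\emptyset$, so the weak intersection hypothesis underlying the lemmas of Section~\SEC{two_paths} is satisfied.

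The main step is to check the hypothesis of Lemma~\ref{lm:varphi=1}, namely $\{\alpha(s_A),\alpha(t_A)\}\cap\{\alpha(s_B),\alpha(t_B)\}\cap\Rset_{>0}=\emptyset$. Every endpoint of a white link is a source, sink, peak, or pit. Using~\refeq{4paths} together with the local degree count~\refeq{degrees}, one sees that at each peak exactly one white edge enters in $\phi$ (the final edge of a unique white link) and exactly one white edge leaves (the first edge of the white snake attached to that peak), and dually at each pit. Combined with the fact that each source and each sink lies on at most one path of $\phi$, this means that distinct white links share neither a peak, nor a pit, nor a source/sink endpoint. The non-degeneracy assumption~\refeq{nondegenerate} then forces any common $\alpha$-coordinate of endpoints of $A$ and $B$ to equal $0$, i.e.\ to come from (possibly different) source endpoints, and such coordinates are excluded from $\Rset_{>0}$.

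Applying Lemma~\ref{lm:varphi=1} yields $\varphi(A,B)=1$, whence $\varphi_{A,B}=1$ in both the permuting and invariant cases. The same argument applies verbatim to pairs of black links. I do not anticipate a substantive obstacle: the proposition is, in essence, an immediate consequence of Lemma~\ref{lm:varphi=1}, once one recognizes that the non-degeneracy condition is precisely what is needed to secure the $\alpha$-coordinate hypothesis of that lemma for every same-color pair of links.
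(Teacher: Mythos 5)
Your argument is correct and takes essentially the same route as the paper's own proof: both reduce every non-invariant pair of links to a same-colored, hence disjoint, pair whose endpoint $\alpha$-coordinates in $\Rset_{>0}$ are pairwise distinct by~\refeq{nondegenerate}, and then invoke Lemma~\ref{lm:varphi=1}. The degree-counting details you add about peaks, pits and source/sink endpoints are just a more explicit version of what the paper leaves implicit.
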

  \begin{prop} \label{pr:seg-seg}
In case~\refeq{nondegenerate}, the product $\varphi^{II}$ of the values
$\varphi_{A,B}$ over snakes $A,B\in\Sscr$ is equal to q.
  \end{prop}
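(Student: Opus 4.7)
The plan is to decompose $\varphi^{II}$ into contributions from pairs of snakes and to reduce them, via the commutation lemmas of Section~\ref{sec:two_paths}, to local data at the bends of the exchange path $Z = \bar Z_1 \circ Z_2 \circ \cdots \circ Z_k$.

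For a pair $(Z_i, Z_j)$ with $|i-j| \ge 2$ I would first observe that the endpoints of $Z_i$ and $Z_j$ form disjoint subsets of $\{c_f, z_1, \ldots, z_{k-1}, c_g\}$. Non-degeneracy~\refeq{nondegenerate} guarantees that the four relevant $\alpha$-coordinates are pairwise distinct and positive, so Lemma~\ref{lm:varphi=1} applies and gives $\varphi(Z_i, Z_j) = 1$. Hence $\varphi_{Z_i, Z_j} = 1$ for every non-adjacent pair, whether it is permuting or invariant.

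For each adjacent pair $(Z_i, Z_{i+1})$ the two snakes are of opposite colors, hence always permuting: the white one occupies the $w(\phi)$-block of $\Nscr$ and the $w(\psi')$-block of $\Nscr^\ast$, while the black one swaps between the remaining two blocks. If $z_i$ is a peak (odd $i$), both snakes start at $z_i$ and Lemma~\ref{lm:asP=asQ} yields $\varphi(Z_i, Z_{i+1}) \in \{q, q^{-1}\}$ according to which outgoing edge from $z_i$ is horizontal. If $z_i$ is a pit (even $i$), Lemma~\ref{lm:atP=atQ} gives the analogous dichotomy based on which incoming edge at $z_i$ is horizontal. Either way, the local sign can be read off as a right or left turn made when traversing $Z$ from $c_f$ to $c_g$.

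The crucial remaining step is to show that these $k-1$ local signs multiply to $q^{+1}$. My plan is to argue via a global turning-number invariant of $Z$: the net signed total of all turns along $Z$ (bends plus internal horvert/verthor turns within snakes) is determined by the starting direction of $\bar Z_1$ at $c_f$ and the ending direction of $Z_k$ at $c_g$; the SE-orientation of $G$ combined with $f<g$ constrains these boundary directions, and the internal turns of each snake contribute a predictable amount that can be subtracted off to isolate the bend balance. The main obstacle will be controlling the internal-turn bookkeeping rigorously, since internal turns within a snake need not cancel in pairs. A cleaner alternative I would pursue if the direct topological argument proves stubborn is induction on $k$: contract an innermost pit-peak pair into a shorter exchange path, verify that the signed balance is preserved under this contraction, and settle the base case $k=2$ by a direct geometric argument showing that the ordering $f<g$ forces $Z_1$ to leave $z_1$ vertically and $Z_2$ to leave $z_1$ horizontally.
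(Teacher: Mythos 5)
Your treatment of the pairs themselves is correct and coincides with the paper's: non-adjacent snakes contribute $1$ by Lemma~\ref{lm:varphi=1} (simplicity of $Z$ plus~\refeq{nondegenerate} separate the endpoint abscissae), adjacent snakes are always permuting and contribute $q^{\pm 1}$ via Lemmas~\ref{lm:asP=asQ} and~\ref{lm:atP=atQ}, with the sign recording which of the two snakes sharing the bend is lower. This correctly reduces the proposition to the claim that the signed sum $\gamma_Z$ of these local signs over the $k-1$ bends equals $+1$.

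That last claim, however, is the entire substance of the proof, and neither of your two routes establishes it. The turning-number route has the difficulty you yourself flag: the local sign at a bend is a $\pm\pi/2$ turn of the concatenated curve $\bar Z_1\circ Z_2\circ\cdots\circ Z_k$, the internal turns of each snake telescope only up to boundary terms depending on whether its first and last edges are horizontal or vertical, and those boundary terms are encoded by exactly the same edge-direction data that defines the bend signs -- so "subtracting off" the internal turns is circular unless done with care; moreover, for an \emph{open} arc the total turning is not pinned down by the endpoint directions alone, so you must first close $Z$ into a simple closed curve and orient it. The contraction route is worse: the inductive step "verify that the signed balance is preserved under this contraction" is precisely the assertion to be proved, and deleting an adjacent pit--peak pair changes $\gamma_Z$ by the sum of their two signs, which need not be zero. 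The paper resolves this by closing $Z$ into a simple cycle $D$ via a horizontal path from $c_f$ to $c_g$ (adding one bend of sign $+1$, so the goal becomes $\gamma_D=2$), generalizing to arbitrary colored simple cycles in weak SE-graphs, and proving $\gamma_D=\pm2$ (Lemma~\ref{lm:gammaD}) by induction on the number of bends: one finds a peak or pit whose horizontal line meets $D$ again, cuts the cycle along that segment into two smaller correctly colored cycles, and tracks how the bend signs redistribute. Your $k=2$ base case is fine, but without a working induction or a completed turning-number computation the proof has a genuine gap at its central step.
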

  \begin{prop} \label{pr:seg-link}
In case~\refeq{nondegenerate}, the product $\varphi^{III}$ of the values
$\varphi_{A,B}$ where one of $A,B$ is a snake and the other is a link is equal
to 1.
  \end{prop}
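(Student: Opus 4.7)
The strategy is to localize the snake--link pairs that can contribute nontrivially to $\varphi^{III}$, classify which of them are permuting, compute each nontrivial $\varphi$-value via Lemma~\ref{lm:1atP=asQ}, and verify that the $q$-powers telescope to $1$.

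\textit{Localization.} By Lemma~\ref{lm:varphi=1}, a pair $(S,L)$ with $S\in\Sscr$ and $L\in\Lscr$ satisfies $\varphi_{S,L}\neq 1$ only if some endpoint of $L$ shares a column in $\Rset_{>0}$ with some endpoint of $S$. Under the non-degeneracy hypothesis~\refeq{nondegenerate}, distinct bends and sinks lie in distinct columns, while sources lie in column~$0$ (excluded by the lemma). Hence $L$ must share a bend endpoint with $S$; combined with~\refeq{4paths}, for each $S=Z_j$ the nontrivial candidates reduce to at most four links: one white and one black link attached at each of the (one or two) bend endpoints of $S$.

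\textit{Classification of permuting pairs.} Since the ordinary exchange keeps each white link inside $\psi$ and each black link inside $\psi'$, while white (resp.\ black) snakes migrate from $\phi$ to $\psi'$ (resp.\ from $\phi'$ to $\psi$), the orderings in $\Nscr=\phi\phi'$ and $\Nscr^\ast=\psi\psi'$ can be read off directly from the paths through each bend. For a white snake $Z_j$ (odd $j$) with $j>1$, this produces exactly two permuting nontrivial pairs: $(Z_j,L^+_W)$, where $L^+_W$ is the white link leaving the pit $z_{j-1}$; and $(Z_j,L^-_B)$, where $L^-_B$ is the black link entering the peak $z_j$. The analogous two pairs appear for each black snake $Z_j$ (even $j$) with $j<k$.

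\textit{Computation and telescoping.} For $(Z_j,L^+_W)$ the two paths share the vertex $z_{j-1}$ with $\beta(t_{Z_j})=\beta(s_{L^+_W})$, so Lemma~\ref{lm:1atP=asQ} gives $\varphi_{Z_j,L^+_W}=q$. For $(Z_j,L^-_B)$ the shared endpoint is $z_j$, with the roles of ``start'' and ``end'' reversed; applying Lemma~\ref{lm:1atP=asQ} to $(L^-_B,Z_j)$ gives $q$ and therefore $\varphi_{Z_j,L^-_B}=\bar q$. Thus each interior snake (white or black) contributes a telescoping factor $q\cdot\bar q=1$. The boundary snakes $Z_1$ and $Z_k$ each have a sink endpoint ($c_f$ respectively $c_g$) at which no link is incident, so each supplies only one permuting nontrivial pair: $\bar q$ for $Z_1$ (at $z_1$) and $q$ for $Z_k$ (at $z_{k-1}$). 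Multiplying all contributions yields $\varphi^{III}=\bar q\cdot q=1$.

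\textit{Main obstacle.} The main obstacle is the classification step: for each of the four color combinations of snake and link at each bend, one must trace the induced paths of $\psi$ and $\psi'$ (via the symmetric differences with $\Escr$) to determine whether that pair swaps order between $\Nscr$ and $\Nscr^\ast$. Once this table is in hand, the computation of each $\varphi$-value is a direct application of Lemma~\ref{lm:1atP=asQ} (either directly or with $P,Q$ interchanged), and the final cancellation becomes transparent.
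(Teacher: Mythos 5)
Your proposal is correct and follows essentially the same route as the paper: localize via Lemma~\ref{lm:varphi=1}, observe that links keep their color under the exchange while snakes swap colors to determine which pairs are permuting, and evaluate each nontrivial pair by Lemma~\ref{lm:1atP=asQ}. The only (immaterial) difference is bookkeeping: you cancel the factors $q,\bar q$ per snake (pit endpoint against peak endpoint, with the two boundary snakes $Z_1,Z_k$ cancelling each other), whereas the paper cancels them per link endpoint (the white and black snakes meeting that endpoint).
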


These propositions are proved in Sects.~\SSEC{prop1}--\SSEC{prop3}. Sometimes
it will be convenient for us to refer to a white (black) snake/link concerning
$\phi,\phi',\pi$ as a $\phi$-snake/link (resp. a $\phi'$-snake/link), and
similarly for $\psi,\psi',\pi$.

 \subsection{Proof of Proposition~\ref{pr:link-link}.} \label{ssec:prop1}
Under the exchange operation using $Z$, any $\phi$-link becomes a $\psi$-link
and any $\phi'$-link becomes a $\psi'$-link. The white links occur in $\Nscr$
earlier than the black links, and similarly for  $\Nscr^\ast$. Therefore, if
$A,B$ are permuting links, then they are of the same color. This implies that
$A\cap B=\emptyset$. Also each endvertex of any link either is a bend or
belongs to $R\cup C$. Then~\refeq{nondegenerate} implies that the sets
$\{\alpha(s_A),\alpha(t_A)\}\cap \Rset_{>0}$ and
$\{\alpha(s_B),\alpha(t_B)\}\cap \Rset_{>0}$ are disjoint. Now
Lemma~\ref{lm:varphi=1} gives $\varphi_{A,B}=1$, and the proposition follows.
\hfill\qed

 \subsection{Proof of Proposition~\ref{pr:seg-seg}.} \label{ssec:prop2}

Consider two snakes $A=Z_i$ and $B=Z_j$, and let $A\prec B$. If $|i-j|>1$ then
$A\cap B=\emptyset$ and, moreover, $\{\alpha(s_A),\alpha(t_A)\}\cap
\{\alpha(s_B),\alpha(t_B)\}=\emptyset$ (since $Z$ is simple and in view
of~\refeq{nondegenerate}). This gives $\varphi_{A,B}=1$, by
Lemma~\ref{lm:varphi=1}.

Now let $|i-j|=1$. Then $A,B$ have different colors; hence $A$ is white and $B$
is black (in view of $A\prec B$). So $i$ is odd, and two cases are possible:
\smallskip

\noindent\underline{\emph{Case 1}:} ~$j=i+1$ and $z_i$ is a peak:
$z_i=s_A=s_B$;
  \smallskip

\noindent\underline{\emph{Case 2}:} ~$j=i-1$ and $z_{i-1}$ is a pit:
$z_{i-1}=t_A=t_B$.
  \smallskip

Cases 1,2 are divided into two subcases each.
 \smallskip

\noindent\underline{\emph{Subcase 1a}:} ~$j=i+1$ and $A$ is lower than $B$.
 \smallskip

\noindent\underline{\emph{Subcase 1b}:} ~$j=i+1$ and $B$ is lower than $A$.
 \smallskip

\noindent\underline{\emph{Subcase 2a}:} ~$j=i-1$ and $A$ is lower than $B$.
 \smallskip

\noindent\underline{\emph{Subcase 2b}:} ~$j=i-1$ and $B$ is lower than $A$.
 \smallskip

(Recall that for directed paths $P,Q$ satisfying~\refeq{pathsPQ}, $P$ is said
to be \emph{lower} than $Q$ if there are $x\in P$ and $y\in Q$ with
$\alpha(x)=\alpha(y)$ and $\beta(x)<\beta(y)$.) Subcases~1a--2b are illustrated
in the picture:

\vspace{-0cm}
\begin{center}
\includegraphics{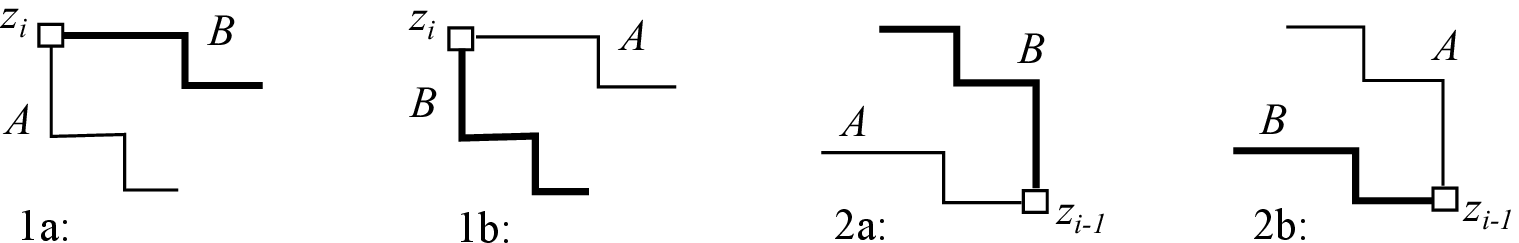}
\end{center}
\vspace{0cm}

Under the exchange operation using $Z$, any snake changes its color; so $A,B$
are permuting. Applying to $A,B$ Lemmas~\ref{lm:asP=asQ} and~\ref{lm:atP=atQ},
we obtain $\varphi_{A,B}=q$ in Subcases~1a,2a, and $\varphi_{A,B}=\bar q$ in
Subcases~1b,2b.

It is convenient to associate with a bend $z$ the number $\gamma(z)$ which is
equal to $+1$ if, for the corresponding pair $A\in\Sscr^\circ$ and
$B\in\Sscr^\bullet$ sharing $z$, ~$A$ is lower than $B$ (as in Subcases~1a,2a),
and equal to $-1$ otherwise (as in Subcases~1b,2b). Define
  \begin{equation} \label{eq:gammaZ}
  \gamma_Z:=\sum(\gamma(z)\;\colon z\;\; \mbox{a bend of}\;\; Z).
  \end{equation}
Then $\varphi^{II}=q^{\gamma_Z}$. Thus, $\varphi^{II}=q$ is equivalent to
   \begin{equation} \label{eq:gamma=1}
   \gamma_Z=1.
   \end{equation}

To show~\refeq{gamma=1}, we are forced to deal with a more general setting.
More precisely, let us turn $Z$ into simple cycle $D$ by combining the directed
path $Z_1$ (from $z_1$ to $z_0=c_f$) with the horizontal path from $c_f$ to
$c_g$ (to create the latter, we formally add to $G$ the horizontal edges
$(c_j,c_{j+1})$ for $j=f,\ldots,g-1$). The resulting directed path $\tilde Z$
from $z_1$ to $c_g=z_k$ is regarded as the new white snake replacing $Z_1$.
Then $\tilde Z_1$ shares the end $z_k$ with the black path $Z_k$; so $z_k$ is a
pit of $D$, and $\tilde Z$ is lower than $Z_k$. Thus, compared with $Z$, the
cycle $D$ acquires an additional bend, namely, $z_k$. We have $\gamma(z_k)=1$,
implying $\gamma_D=\gamma_Z+1$. Then~\refeq{gamma=1} is equivalent to
$\gamma_D=2$.

On this way, we come to a new (more general) setting by considering an
arbitrary simple (non-directed) cycle $D$ rather than a special path $Z$.
Moreover, instead of an SE-graph as before, we can work with a more general
directed planar graph $G$ in which any edge $e=(u,v)$ points arbitrarily within
the south-east sector, i.e., satisfies $\alpha(u)\le \alpha(v)$ and
$\beta(u)\ge \beta(v)$. We call $G$ of this sort a \emph{weak SE-graph}.

So now we are given a colored simple cycle $D$ in $G$, i.e., $D$ is
representable as a concatenation $\bar D_1\circ D_2\circ\ldots \circ \bar
D_{k-1}\circ D_k$, where each $D_i$ is a directed path in $G$; a path
(\emph{snake}) $D_i$ with $i$ odd (even) is colored white (resp. black). Let
$d_1,\ldots,d_k$ be the sequence of bends in $D$, i.e., $d_i$ is a common
endvertex of $D_{i-1}$ and $D_i$ (letting $D_0:=D_k$). We assume that $D$ is
oriented according to the direction of $D_i$ with $i$ even. When this
orientation is clockwise (counterclockwise) around a point in the open bounded
region $O_D$ of the plane surrounded by $D$, we say that $D$ is
\emph{clockwise} (resp. \emph{counterclockwise}). In particular, the cycle
arising from the above path $Z$ is clockwise.

Our goal is to prove the following
  \begin{lemma} \label{lm:gammaD}
Let $D$ be a colored simple cycle in a weak SE-graph $G$. If $D$ is clockwise
then $\gamma_D=2$. If $D$ is counterclockwise then $\gamma_D=-2$.
  \end{lemma}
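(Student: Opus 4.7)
The plan is to compute the total signed rotation of the unit tangent as the cycle $D$ is traversed once in its chosen orientation and then invoke the Umlaufsatz: for a simple closed polygonal curve this total equals $2\pi$ if $D$ is counterclockwise and $-2\pi$ if $D$ is clockwise. The whole proof amounts to rewriting this total in a form that isolates $\gamma_D$.

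I would split the rotation into two pieces. Within any snake $D_j$ traversed in the cycle's direction, the sum of its interior signed turn-angles telescopes to the angular difference between its tangent at the end and at the start, so only the first and last edges of each snake matter. Regrouping the resulting sum $T$ by bends rather than by snakes, and using that at a peak both incident snakes \emph{begin} at $d_i$ while at a pit they \emph{end} there, one gets $T = \sum_i \Delta_i$, where $\Delta_i$ compares the tangent angles (in $[-\pi/2,0]$) of the two incident snakes' first edges at peaks and of their last edges at pits. Separately, at each bend $d_i$ the signed turn $\rho_i$ from incoming tangent (in the NW quadrant) to outgoing tangent (in the SE quadrant) is the principal value in $(-\pi,\pi]$ of a naively computed angle difference; a short case analysis shows both that $\mathrm{sign}(\Delta_i) = -\gamma(d_i)$ (the ``lower'' snake is the one whose incident edge is more southerly at a peak and more easterly at a pit) and that the principal-value reduction yields the uniform identity $\rho_i = -\Delta_i - \gamma(d_i)\pi$.

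Putting the two pieces together,
\[
T + \sum_i \rho_i \;=\; \sum_i \Delta_i + \sum_i \bigl(-\Delta_i - \gamma(d_i)\pi\bigr) \;=\; -\pi\,\gamma_D,
\]
and this must equal $\pm 2\pi$ by the Umlaufsatz, yielding $\gamma_D=2$ when $D$ is clockwise and $\gamma_D=-2$ when $D$ is counterclockwise. The main obstacle is the principal-value bookkeeping at bends: a naive angle-subtraction of the incoming and outgoing tangents lands outside $(-\pi,\pi]$ exactly when $\gamma(d_i)$ has one specific sign, and the $\pm 2\pi$ wrap-around that supplies the Umlaufsatz's nonzero total rotation is precisely what contributes the crucial $-\gamma(d_i)\pi$ correction to $\rho_i$. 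Verifying this wrap-around uniformly at peaks and pits, and checking that the sign of $\Delta_i$ matches the ``lower/higher'' dictionary in all four local configurations (peak/pit $\times$ white-lower/black-lower), is where the computation concentrates; after that, the identity above collapses exactly as stated.
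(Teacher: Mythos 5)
Your proof is correct, and it takes a genuinely different route from the paper's. The paper argues by induction on the number of bends: it locates a peak (or pit) whose horizontal line meets $D$ again, cuts $D$ along that horizontal chord into two smaller correctly colored cycles $D'$, $D''$, tracks how the surgery trades the bend $d_i$ for the new bend $x$, and adds up $\gamma_{D'}+\gamma_{D''}+\gamma(d_i)-\gamma(x)$ over the cases (I), (O1), (O2). You instead compute the rotation index of $D$ in one shot: since every snake's tangent stays in a single closed quadrant (SE for black snakes, NW for reversed white snakes), the interior turning telescopes, and after regrouping by bends it cancels exactly against the regular part of the bend turns, leaving $-\pi\gamma_D$, which the Umlaufsatz forces to be $\mp 2\pi$. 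I checked your key identity $\rho_i=-\Delta_i-\gamma(d_i)\pi$ at peaks and pits for both signs of $\gamma$ (with $\Delta_i=\theta_{\rm white}-\theta_{\rm black}$ at peaks and $\theta_{\rm black}-\theta_{\rm white}$ at pits, which is precisely what the regrouping of $T$ produces), and it holds. Your approach is shorter and dispenses with both the existence argument for a suitable peak/pit and the topological case analysis, at the price of importing the exterior-angle theorem for simple closed polygons as a black box, whereas the paper's induction is self-contained. Two points deserve explicit mention in a written-up version, though both are of the same nature as the perturbation the paper itself invokes: (1) you must rule out cusps, i.e., turns of exactly $\pm\pi$ at a bend; this happens only if the two edges incident to the bend are parallel, which planarity of the embedding (two distinct monotone edges leaving or entering the same vertex cannot have the same tangent there) excludes after drawing edges as SE-monotone arcs; (2) the dictionary $\mathrm{sign}(\Delta_i)=-\gamma(d_i)$ equates a local tangent comparison at the bend with the global ``lower than'' relation defining $\gamma$, which is justified because the two snakes sharing a bend are weakly intersecting SE-monotone curves, so whichever is lower near the bend is lower throughout; in the degenerate situations (vertical snakes, twins) this needs the same conventions and small perturbation that the paper uses to make the $\beta(d_i)$ distinct.
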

  \begin{proof}
~We use induction on the number $\eta(D)$ of bends of $D$. It suffices to
consider the case when $D$ is clockwise (since for a counterclockwise cycle
$D'=\bar D'_1\circ D'_2\circ\ldots \circ \bar D'_{k-1}\circ D'_k$, the reversed
cycle $\bar D'=\bar D'_k\circ D'_{k-1}\circ\ldots \circ \bar D'_2\circ D'_1$ is
clockwise, and it is easy to see that $\gamma_{\bar D'}=-\gamma_{D'}$).

W.l.o.g., one may assume that the coordinates $\beta(d_i)$ of all bends $d_i$
are different (as we can make, if needed, a due small perturbation on $D$,
which does not affect $\gamma$).

If $\eta(D)=2$, then $D=\bar D_1\circ D_2$, and the clockwise orientation of
$D$ implies that the path $D_1$ is lower than $D_2$. So
$\gamma(d_1)=\gamma(d_2)=1$, implying $\gamma_D=2$.

 Now assume that $\eta(D)>2$. Then at least one of the following is true:
 \smallskip

(a) there exists a peak $d_i$ such that the horizontal line through $d_i$ meets
$D$ on the left of $d_i$, i.e., there is a point $x$ in $D$ with
$\alpha(x)<\alpha(d_i)$ and $\beta(x)=\beta(d_i)$;

(b) there exists a pit $d_i$ such that the horizontal line through $d_i$ meets
$D$ on the right of $d_i$.
 \smallskip

(This can be seen as follows. Let $d_j$ be a peak with $\beta(d_j)$ maximum. If
$\beta(d_{j-1})\le \beta(d_{j+1})$, then, by easy topological reasonings,
either the pit $d_{j+1}$ is as required in~(b) (when $d_{j+2}$ is on the right
from $D_{j+1}$), or the peak $d_{j+2}$ is as required in~(a) (when $d_{j+2}$ is
on the left from $D_{j+1}$), or both. And if $\beta(d_{j-1})> \beta(d_{j+1})$,
similar properties hold for $d_{j-1}$ and $d_{j-2}$.)

We may assume that case~(a) takes place (for case~(b) is symmetric to~(a)).
Choose the point $x$ as in~(a) with $\alpha(x)$ maximum and draw the horizontal
line-segment $L$ connecting the points $x$ and $d_i$. Then the interior of $L$
does not meet $D$. Two cases are possible:
  \smallskip

(I) $\Inter(L)$ is contained in the region $O_D$; or
 \smallskip

(O) $\Inter(L)$ is outside $O_D$.
\smallskip

Since $x$ cannot be a bend of $D$ (in view of $\beta(x)=\beta(d_i)$ and
$\beta(d_i)\ne\beta(d_{i'})$ for any $i'\ne i$), $x$ is an interior point of
some snake $D_j$; let $D'_j$ and $D''_j$ be the parts of $D_j$ from $s_{D_j}$
to $x$ and from $x$ to $t_{D_j}$, respectively. Using the facts that $D$ is
oriented clockwise and this orientation is agreeable with the forward
(backward) direction of each black (resp. white) snake, one can conclude that
  \begin{numitem1} \label{eq:casesIO}
(a) in case (I), ~$D_j$ is white and $\gamma(d_i)=-1$ (i.e., for the white
snake $D_i$ and black snake $D_{i+1}$ that share the peak $d_i$, ~$D_{i+1}$ is
lower than $D_i$); and (b) in case~(O), ~$D_j$ is black and $\gamma(d_i)=1$
(i.e., $D_i$ is lower than $D_{i+1}$)
  \end{numitem1}

See the picture (where the orientation of $D$ is indicated):

\vspace{-0.3cm}
\begin{center}
\includegraphics{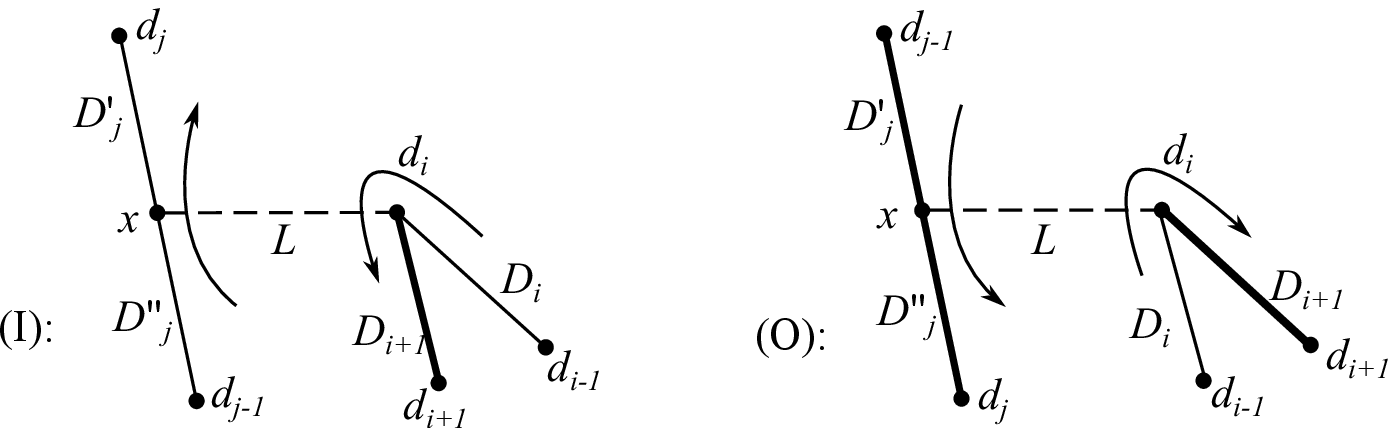}
\end{center}
\vspace{0cm}

The points $x$ and $d_i$ split the cycle (closed curve) $D$ into two parts
$\zeta',\zeta''$, where the former contains $D'_j$ and the latter does $D''_j$.

We first examine case (I). The line $L$ divides the region $O_D$ into two parts
$O'$ and $O''$ lying above and below $L$, respectively. Orienting the curve
$\zeta'$ from $x$ to $d_i$ and adding to it the segment $L$ oriented from $d_i$
to $x$, we obtain closed curve $D'$ surrounding $O'$. Note that $D'$ is
oriented clockwise around $O'$. We combine the paths $D'_j$, $L$ (from $x$ to
$d_i$) and $D_i$ into one directed path $A$ (going from $s_{D'_j}=s_{D_j}=d_j$
to $t_{D_i}=d_{i-1}$). Then $D'$ turns into a correctly colored simple cycle in
which $A$ is regarded as a white snake and the white/black snakes structure on
the rest preserves (cf.~\refeq{casesIO}(a)).

In its turn, the curve $\zeta''$ oriented from $d_{i}$ to $x$ plus the segment
$L$ (oriented from $x$ to $d_i$) form closed curve $D''$ that surrounds $O''$
and is oriented clockwise as well. We combine $L$ and $D_{i+1}$ into one black
snake $B$ (going from $x$ to $d_{i+1}$). Then $D''$ becomes a correctly colored
cycle, and $x$ is a peak in it. (The point $x$ turns into a vertex of $G$.) We
have $\gamma(x)=1$ (since the white $D''_j$ is lower than the black $B$).

The creation of $D',D''$ from $D$ in case (I) is illustrated in the picture:

\vspace{-0.0cm}
\begin{center}
\includegraphics{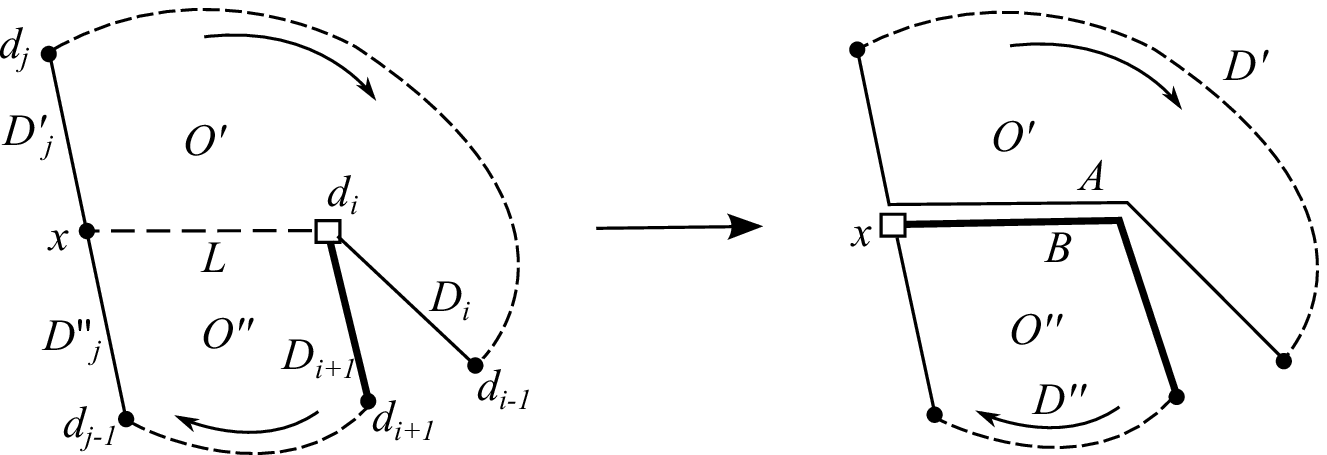}
\end{center}
\vspace{-0.1cm}

We observe that, compared with $D$, the pair $D',D''$ misses the bend $d_i$
(with $\gamma(d_i)=-1$) but acquires the bend $x$ (with $\gamma(d)=1$). Then
   \begin{equation}  \label{eq:DD'D''}
   \eta(D)=\eta(D')+\eta(D''),
   \end{equation}
implying $\eta(D'),\eta(D'')<\eta(D)$. Therefore, we can apply induction. This
gives $\gamma_{D'}=\gamma_{D''}=2$. Now, by reasonings above,
  $$
  \gamma_D=\gamma_{D'}+\gamma_{D''}+\gamma(d_i)-\gamma(x)=2+2-1-1=2,
  $$
as required.

Next we examine case~(O). From the fact that $D$ simple one can conclude that
the curve $\zeta'$ (containing $D'_j$) passes through the black snake
$D_{i+1}$, and the curve $\zeta''$ (containing $D''_j$) through the white snake
$D_i$. Adding to each of $\zeta',\zeta''$ a copy of $L$, we obtain closed
curves $D',D''$, respectively, each inheriting the orientation of $D$. They
become correctly colored simple cycles when we combine the paths
$D'_j,L,D_{i+1}$ into one black snake (from $d_{j-1}$ to $d_{i+1}$) in $D'$,
and combine the paths $L,D_i$ into one white snake (from the new bend $x$ to
$d_i$) in $D''$. Let $O',O''$ be the bounded regions in the plane surrounded by
$D',D''$, respectively. It is not difficult topological exercise to see that
two cases are possible:
  \smallskip

  (O1) ~$O'$ includes $O''$ (and $O_D$);
  \smallskip

  (O2) ~$O''$ includes $O'$ (and $O_D$).

These cases are illustrated in the picture:

\vspace{-0.0cm}
\begin{center}
\includegraphics{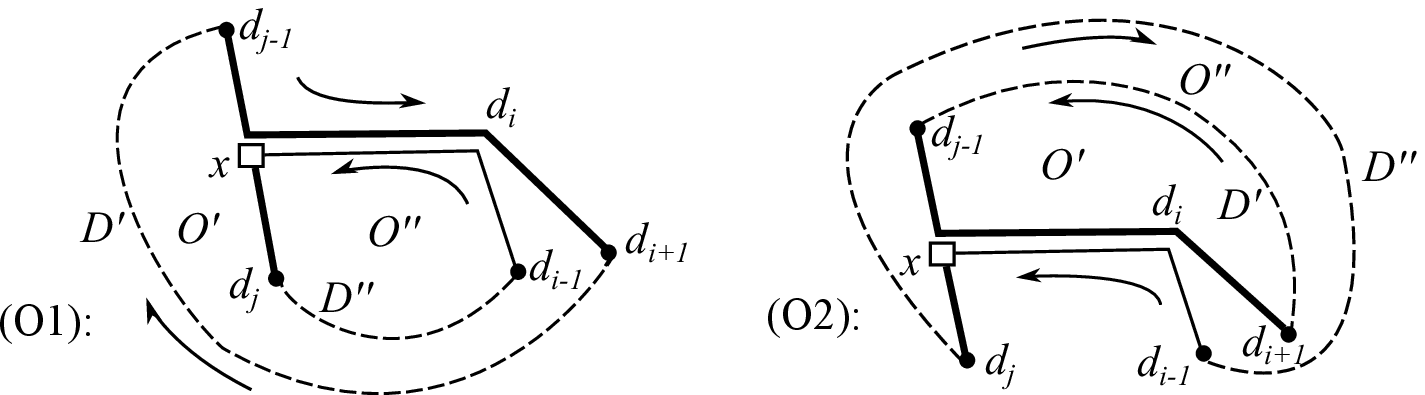}
\end{center}
\vspace{0cm}

Then in case~(O1), ~$D'$ is clockwise and $D''$ is counterclockwise, whereas in
case~(O2) the behavior is converse. Also $\gamma(d_i)=1$ and $\gamma(x)=-1$.
Similar to case~(I), \refeq{DD'D''} is true and we can apply induction. Then in
case~(O1), we have $\gamma_{D'}=2$ and $\gamma_{D''}=-2$, whence
   $$
  \gamma_D=\gamma_{D'}+\gamma_{D''}+\gamma(d_i)-\gamma(x)=2-2+1-(-1)=2.
  $$
And in case~(O2), we have $\gamma_{D'}=-2$ and $\gamma_{D''}=2$, whence
   $$
  \gamma_D=\gamma_{D'}+\gamma_{D''}+\gamma(d_i)-\gamma(x)=-2+2+1-(-1)=2.
  $$

Thus, in all cases we obtain $\gamma_D=2$, yielding the lemma.
 \end{proof}

This completes the proof of Proposition~\ref{pr:seg-seg}. \hfill\qed

 \subsection{Proof of Proposition~\ref{pr:seg-link}.} \label{ssec:prop3}

Consider a link $L$. By Lemma~\ref{lm:varphi=1}, for any snake $P$,
~$\varphi_{L,P}\ne 1$ is possible only if $L$ and $P$ have a common endvertex
$v$. Note that $v\notin R\cup C$. In particular, it suffices to examine only
bounded and semi-bounded links.

First assume that $s_L\notin R$. Then there are exactly two snakes containing
$s_L$, namely, a white snake $A$ and a black snake $B$ such that $s_L=t_A=t_B$.
If $L$ is white, then $A$ and $L$ belong to the same path in $\phi$; therefore,
$A\prec L\prec B$. Under the exchange operation $A$ becomes black, $B$ becomes
white, and $L$ continues to be white. Then $B,L$ belong to the same path in
$\psi$; this implies $B\precast L\precast A$. So both pairs $(A,L)$ and $(L,B)$
are permuting, and Lemma~\ref{lm:1atP=asQ} gives $\varphi_{A,L}=q$ and
$\varphi_{L,B}=\bar q$, whence $\varphi_{A,L}\varphi_{L,B}=1$.

Now let $L$ be black. Then $A\prec B\prec L$ and $B\precast A\precast L$. So
both pairs $\{A,L\}$ and $\{B,L\}$ are invariant, whence
$\varphi_{A,L}=\varphi_{B,L}=1$.

The end $t_L$ is examined in a similar way. Assuming $t_L\notin C$, there are
exactly two snakes, a white snake $A'$ and a black snake $B'$, that contain
$t_L$, namely: $t_L=s_{A'}=s_{B'}$. If $L$ is white, then $L\prec A'\prec B'$
and $L\precast B'\precast A'$. Therefore, $\{L,A\}$ and $\{L,B'\}$ are
invariant, yielding $\varphi_{L,A'}=\varphi_{L,B'}=1$. And if $L$ is black,
then $A'\prec L\prec B'$ and $B'\precast L\precast A'$. So both $(A',L)$ and
$(L,B')$ are permuting, and we obtain from Lemma~\ref{lm:1atP=asQ} that
$\varphi_{A',L}=\bar q$ and $\varphi_{L,B'}=q$, yielding
$\varphi_{A',L}\varphi_{L,B'}=1$.

These reasonings prove the proposition. \hfill\qed

 \subsection{Degenerate case.} \label{ssec:degenerate}

We have proved relation~\refeq{Pi=q} in a non-degenerate case, i.e., subject
to~\refeq{nondegenerate}, and now our goal is to prove~\refeq{Pi=q} when the
set
  $$
  \Zscr:=\{z_1,\ldots,z_{k-1}\}\cup \{c_j\colon j\in J\cup J'\}
  $$
contains distinct elements $u,v$ with $\alpha(u)=\alpha(v)$. We say that such
$u,v$ form a \emph{defect pair}. A special defect pair is formed by twins
$z_i,z_j$ (bends satisfying $i\ne j$, ~$\alpha(z_i)=\alpha(z_j)$ and
$\beta(z_i)=\beta(z_j)$). Another special defect pair is of the form
$\{s_P,t_P\}$ when $P$ is a \emph{vertical} snake or link, i.e.,
$\alpha(s_P)=\alpha(t_P)$.

We will show~\refeq{Pi=q} by induction on the number of defect pairs.

Let $a$ be the \emph{minimum} number such that the set $X:=\{u\in \Zscr\;\colon
\alpha(u)=a\}$ contains a defect pair. We denote the elements of $X$ as
$v_0,v_1,\ldots,v_r$, where for each $i$, ~$v_{i-1}$ is \emph{higher} than
$v_i$, which means that either $\beta(v_{i-1})>\beta(v_i)$, or $v_{i-1},v_i$
are twins and $v_{i-1}$ is a pit (while $v_{i}$ is a peak) in the exchange path
$Z$. The highest element $v_0$ in this order is also denoted by $u$.

In order to conduct induction, we deform the graph $G$ within a sufficiently
narrow vertical strip $S=[a-\eps,a+\eps]\times \Rset$ (where $0<\eps<
\min\{|\alpha(z)-a|\colon z\in \Zscr-X\}$) to get rid of the defect pairs
involving $u$ in such a way that the configuration of snakes/links in the
arising graph $\tilde G$ remains ``equivalent'' to the initial one. More
precisely, we shift the bend $u$ at a small distance ($<\eps$) to the left,
keeping the remaining elements of $\Zscr$; then the bend $u'$ arising in place
of $u$ satisfies $\alpha(u')<\alpha(u)$ and $\beta(u')=\beta(u)$. The
snakes/links with an endvertex at $u$ are transformed accordingly; see the
picture for an example.

\vspace{-0cm}
\begin{center}
\includegraphics{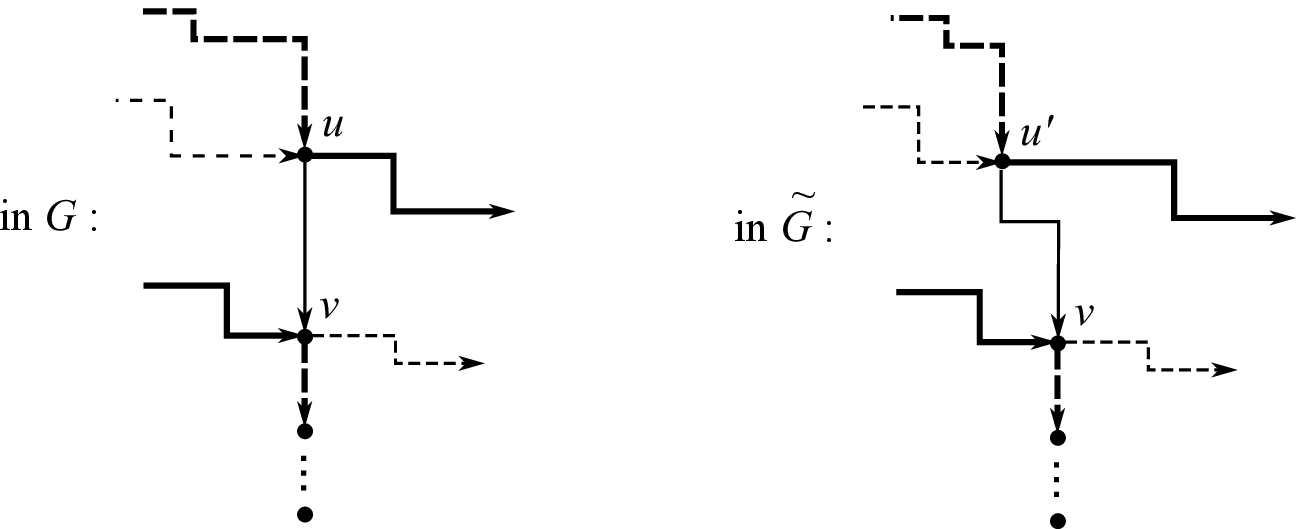}
\end{center}
\vspace{-0.2cm}

Let $\varPi$ and $\tilde\varPi$ denote the L.H.S. value in~\refeq{Pi=q} for the
initial and new configurations, respectively. Under the deformation, the number
of defect pairs becomes smaller, so we may assume by induction that $\varPi=q$.
Thus, we have to prove that
   \begin{equation} \label{eq:varPi}
 \varPi=\tilde\varPi.
  \end{equation}

We need some notation and conventions. For $v\in X$, the set of (initial)
snakes and links with an endvertex at $v$ is denoted by $\Pscr_v$. For
$U\subseteq X$, ~$\Pscr_U$ denotes $\cup(\Pscr_v\;\colon v\in U)$.
Corresponding objects for the deformed graph $\tilde G$ are usually denoted
with tildes as well; e.g.: for a path $P$ in $G$, its image in $\tilde G$ is
denoted by $\tilde P$; the image of $\Pscr_v$ is denoted by $\tilde \Pscr_{v}$
(or $\tilde \Pscr_{\tilde v}$), and so on. The set of standard paths in
$\Pscr_U$ (resp. $\tilde\Pscr_U$) is denoted by $\Pscr^{\rm st}_U$ (resp.
$\tilde\Pscr^{\rm st}_U$). Define
  \begin{equation} \label{eq:u_X-u}
  \varPi_{u,X-u}:=\prod(\varphi_{P,Q}\colon P\in\Pscr_u,\;
Q\in\Pscr_{X-u}).
  \end{equation}
A similar product for $\tilde G$ (i.e., with $\tilde\Pscr_u$ instead of
$\Pscr_u$) is denoted by $\tilde\varPi_{u,X-u}$ .

Note that~\refeq{varPi} is equivalent to
  \begin{equation} \label{eq:varPiX}
  \varPi_{u,X-u}=\tilde\varPi_{u,X-u}.
    \end{equation}
This follows from the fact that for any paths $P,Q\in\Sscr\cup\Lscr$ different
from those involved in~\refeq{u_X-u}, the values $\varphi_{P,Q}$ and
$\varphi_{\tilde P,\tilde Q}$ are equal. (The only nontrivial case arises when
$P,Q\in\Pscr_u$ and $Q$ is vertical (so $\tilde Q$ becomes standard). Then
$t_Q=v_1$. Hence $Q\in \Pscr_{X-u}$, the pair $P,Q$ is involved in
$\varPi_{u,X-u}$, and the pair $\tilde P,\tilde Q$ in $\tilde\varPi_{u,X-u}$.)

To simplify our description technically, one trick will be of use. Suppose that
for each standard path $P\in\Pscr^{\rm st}_X$, we choose a point (not
necessarily a vertex) $v_P\in\Inter(P)$ in such a way that
$\alpha(s_P)<\alpha(v_P)<\alpha(t_P)$, and the coordinates $\alpha(v_P)$ for
all such paths $P$ are different. Then $v_P$ splits $P$ into two subpaths
$P',P''$, where we denote by $P'$ the subpath connecting $s_P$ and $v_P$ when
$\alpha(s_P)=a$, and connecting $v_P$ and $t_P$ when $\alpha(t_P)=a$, while
$P''$ is the rest. This provides the following property: for any
$P,Q\in\Pscr^{\rm st}_X$, ~$\varphi_{P',Q''}=\varphi_{Q',P''}=1$ (in view of
Lemma~\ref{lm:varphi=1}). Hence
$\varphi_{P,Q}=\varphi_{P',Q'}\varphi_{P'',Q''}$. Also $P''=\tilde P''$. It
follows that~\refeq{varPiX} would be equivalent to the equality
  $$
  \prod(\varphi_{P',Q'}\colon P\in\Pscr_u,\;Q\in\Pscr_{X-\{u\}})
  =\prod(\varphi_{\tilde P',\tilde Q'}\colon
  P\in\Pscr_u,\;Q\in\Pscr_{X-\{u\}}).
  $$

In light of these reasonings, it suffices to prove~\refeq{varPiX} in the
special case when
  \begin{numitem1} \label{eq:assumption}
any $P\in\Pscr_u$ and $Q\in\Pscr_{X-u}$ satisfy
$\{\alpha(s_P),\alpha(t_P)\}\cap \{\alpha(s_Q),\alpha(t_Q)\}=\{a\}$.
  \end{numitem1}

For $i=0,\ldots,r$, we denote by $A_i,B_i,K_i,L_i$, respectively, the white
snake, black snake, white link, and black link, that have an endvertex at
$v_i$. Note that if $v_{i-1},v_i$ are twins, then the fact that $v_{i-1}$ is a
pit implies $A_{i-1},B_{i-1}$ are the snakes entering $v_{i-1}$, and $A_i,B_i$
are the snakes leaving $v_i$; for convenience, we formally define $K_{i-1}=K_i$
and $L_{i-1}=L_i$ to be the trivial paths consisting of the the same single
vertex $v_i$. Note that if $v_r\in C$, then some paths among $A_k,B_k,K_k,L_k$
vanish (e.g., both snakes and one link).

When vertices $v_i$ and $v_{i+1}$ are connected by a (vertical) path in
$\Sscr\cup \Lscr$, we denote such a path by $P_i$ and say that the vertex $v_i$
is \emph{open}; otherwise $v_i$ is said to be closed. Note that $v_i,v_{i+1}$
can be connected by either one snake, or one link, or two links (namely,
$K_i,L_i$); in the latter case $P_i$ is chosen arbitrarily among them. In
particular, if $v_i,v_{i+1}$ are twins, then $v_i$ is open and the role of
$P_i$ is played by any of the trivial links $K_i,L_i$. Obviously, in a sequence
of vertical paths $P_i,P_{i+1},\ldots,P_{j}$, the snakes and links alternate.
One can see that if $P_i$ is a white snake, i.e., $P_i=A_i=A_{i+1}=:A$, then
both black snakes $B_i,B_{i+1}$ are standard, and there holds $v_i=s_{B_i}$ and
$v_{i+1}=t_{B_{i+1}}$. See the left fragment of the picture:

\vspace{-0.3cm}
\begin{center}
\includegraphics{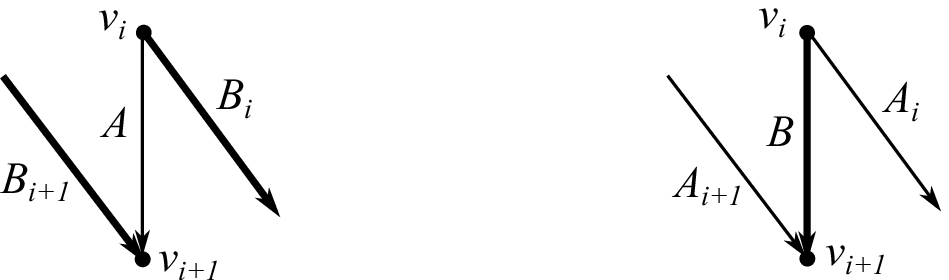}
\end{center}
\vspace{0cm}

Symmetrically, if $P_i$ is a black snake: $B_i=B_{i+1}=:B$, then the white
snakes $A_i,A_{i+1}$ are standard, $v_i=s_{A_i}$ and $v_{i+1}=t_{A_{i+1}}$; see
the right fragment of the above picture.

In its turn, if $P_i$ is a nontrivial white link, i.e., $P_i=K_i=K_{i+1}$, then
two cases are possible: either the black links $L_i,L_{i+1}$ are standard,
$v_i=s_{L_i}$ and $v_{i+1}=t_{L_{i+1}}$, or $L_i=L_{i+1}=P_i$. And if $P_i$ is
a black link, the behavior is symmetric. See the picture:

\vspace{-0.3cm}
\begin{center}
\includegraphics{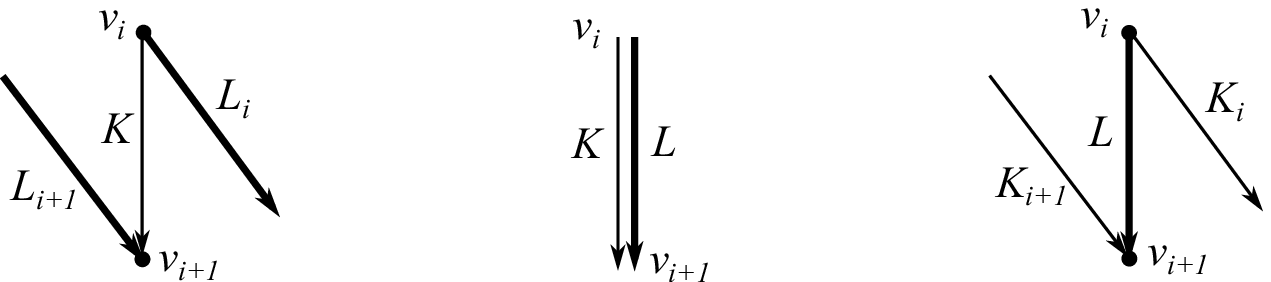}
\end{center}
\vspace{0cm}

Now we are ready to start proving equality~\refeq{varPiX}. Note that the
deformation of $G$ changes none of the orders $\prec$ and $\precast$.

We say that paths $P,P'\in\Pstan_X$ are \emph{separated} (from each other) if
they are not contained in the same path of any of the flows
$\phi,\phi',\psi,\psi'$. The following observation will be of use:
  \begin{numitem1}  \label{eq:monochPQ}
if $P,P'\in\Pstan_X$ have the same color, are separated, and $P'$ is lower than
$P$, then $P'\prec P$; and similarly w.r.t. the order $\precast$ (concerning
$\psi,\psi'$).
  \end{numitem1}
Indeed, suppose that $P,P'$ are white, and let $Q$ and $Q'$ be the paths of the
flow $\phi$ containing $P$ and $P'$, respectively. Since $P,P'$ are separated,
the paths $Q,Q'$ are different. Moreover, the fact that $P'$ is lower than $P$
implies that $Q'$ is lower than $Q$ (taking into account that $Q,Q'$ are
disjoint). Thus, $Q'$ precedes $Q$ in $\phi$, yielding $P'\prec P$, as
required. When $P,P'$ concern one of $\phi',\psi,\psi'$, the argument is
similar.
 \smallskip

In what follows we will use the abbreviated notation $A,B,K,L$ for the paths
$A_0,B_0,K_0,L_0$ (respectively) having an endvertex at $u=v_0$. Also for
$R\in\Pscr_{X-u}$, we denote the product $\varphi_{A,R}\varphi_{B,R}
\varphi_{K,R}\varphi_{L,R}$ by $\varPi(R)$, and denote by $\tilde \varPi(R)$ a
similar product for the paths $\tilde A,\tilde B,\tilde K, \tilde L, \tilde R$
(concerning the deformed graph $\tilde G$). One can see that $\varPi_{u,X-u}$
(resp. $\tilde \varPi_{u,X-u}$) is equal to the product of the values
$\varPi(R)$ (resp. $\tilde\varPi(R)$) over $R\in\Pscr_{X-u}$.

To show~\refeq{varPiX}, we will examine several cases. First of all we consider
 \smallskip

\noindent\underline{\emph{Case (R1)}:} ~$\{u\}$ is closed; in other words, all
paths $A,B,K,L$ are standard (taking into account that $u$ is the highest
vertex in $X$).

  \begin{prop}  \label{pr:caseR1}
In case~(R1), ~$\varPi(R)=\tilde\varPi(R)=1$ holds for any $R\in\Pscr_{X-u}$.
As a consequence, \refeq{varPiX} is valid.
  \end{prop}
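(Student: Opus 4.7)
The plan is to exploit the strong local symmetry at $u=v_0$, which in case~(R1) is an ordinary bend (not a twin) with four standard incident paths $A,B,K,L$. Assume without loss of generality that $u$ is a peak of $Z$; then, by the conservation-of-flow argument at $u$ already used implicitly in Sect.~\SSEC{prop3}, we have $u=s_A=s_B$ (both snakes leave $u$) and $u=t_K=t_L$ (both links enter $u$). The pit case is treated symmetrically.

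For each $P\in\{A,B,K,L\}$ I would compute $\varphi(P,R)$ via the appropriate lemma from Section~\SEC{two_paths}. By assumption~\refeq{assumption} the only common $\alpha$-coordinate between $P$ and $R$ is $a$, and since $v_i$ is strictly lower than $u$ the four values are determined as follows. If $v_i=s_R$: Lemma~\ref{lm:asP=asQ} gives $\varphi(A,R)=\varphi(B,R)=\bar q$, while Lemma~\ref{lm:1atP=asQ} gives $\varphi(K,R)=\varphi(L,R)=q$. If $v_i=t_R$: Lemma~\ref{lm:2atP=asQ} gives $\varphi(A,R)=\varphi(B,R)=q$, while Lemma~\ref{lm:atP=atQ} gives $\varphi(K,R)=\varphi(L,R)=\bar q$. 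In both cases
\[
\varphi(A,R)\,\varphi(B,R)\,\varphi(K,R)\,\varphi(L,R)=1,
\]
or equivalently, the block $W:=w(A)w(B)w(K)w(L)$ commutes with $w(R)$.

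The second step converts this weight identity into $\varPi(R)=1$, which requires bookkeeping of the orderings in $\Nscr$ and $\Nscr^\ast$. The crucial structural input is the local adjacency at the peak $u$: in $\Nscr$, $K$ immediately precedes $A$ inside one $\phi$-path and $L$ immediately precedes $B$ inside one $\phi'$-path; after the exchange these regroup as $K\precast B$ in one $\psi$-path and $L\precast A$ in one $\psi'$-path. Writing $\sigma_P\in\{-1,0,+1\}$ for $0$ if $(P,R)$ is invariant, $+1$ if permuting with $P\prec R$, $-1$ if permuting with $R\prec P$, a short enumeration over the type of $R$ (white or black, snake or link, with its endvertex at some $v_i$) shows that the equality $\sigma_A+\sigma_B=\sigma_K+\sigma_L$ always holds. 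Combined with $\varphi(A,R)=\varphi(B,R)=\varphi(K,R)^{-1}=\varphi(L,R)^{-1}$, this gives $\varPi(R)=\varphi(A,R)^{\sigma_A+\sigma_B-\sigma_K-\sigma_L}=1$. The deformation $G\to\tilde G$ shifts $u$ leftward by less than $\eps$ without disturbing incidences, colors, or orderings of the four paths at $u$, so the identical argument yields $\tilde\varPi(R)=1$; equation~\refeq{varPiX} then follows by taking the product over $R\in\Pscr_{X-u}$.

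The main obstacle is the sign-pattern enumeration in the second step. What makes it tractable is precisely the peak-adjacency observation above: it forces the four pairs $(K,A),(L,B),(K,B),(L,A)$ to stay adjacent within their respective halves of $\Nscr$ and $\Nscr^\ast$, so the position of $R$ relative to the quadruple $\{A,B,K,L\}$ falls into a very small number of patterns, in each of which the pairing $\sigma_A+\sigma_B=\sigma_K+\sigma_L$ is immediate.
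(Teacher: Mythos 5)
Your proposal is correct and follows essentially the same route as the paper: the four values $\varphi(P,R)$ for $P\in\{A,B,K,L\}$ are computed from Lemmas~\ref{lm:asP=asQ}--\ref{lm:2atP=asQ}, and the conclusion is obtained by tracking which of the four pairs are permuting versus invariant. Your repackaging into (i) the total commutation identity $\varphi(A,R)\varphi(B,R)\varphi(K,R)\varphi(L,R)=1$ and (ii) the sign balance $\sigma_A+\sigma_B=\sigma_K+\sigma_L$ is a tidy reformulation of the paper's direct case-by-case computation, and the balance identity does hold (it follows from the white-before-black structure of $\Nscr,\Nscr^\ast$ together with observation~\refeq{monochPQ}, since in case~(R1) each of $A,B,K,L$ is separated from $R$); note, though, that you assert the enumeration rather than carry it out, much as the paper leaves three of its four endpoint combinations to the reader. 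One small imprecision: for $\tilde\varPi(R)=1$ the ``identical argument'' is not the right justification, because after shifting $u$ the endpoints of $\tilde A,\tilde B,\tilde K,\tilde L$ no longer share the coordinate $a$ with those of $\tilde R$, so the four lemmas you cite no longer produce $q^{\pm1}$; instead, by~\refeq{assumption} each pair now satisfies the hypothesis of Lemma~\ref{lm:varphi=1} and every factor is individually $1$, which is the paper's (simpler) argument and still yields your conclusion.
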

  \begin{proof}
~Let $R\in \Pscr_{v_p}$ for $p\ge 1$. Observe that~\refeq{assumption} together
with the fact that the vertex $u$ is shifted under the deformation of $G$
implies that $\{\alpha(s_{\tilde P}),\alpha(t_{\tilde P})\} \cap
\{\alpha(s_{\tilde R}),\alpha(t_{\tilde R})\}=\emptyset$ holds for any $P\in
\Pscr_u$. This gives $\tilde\varPi(R)=1$, by Lemma~\ref{lm:varphi=1}.

Next we show the equality $\varPi(R)=1$. One may assume that $R$ is standard
(otherwise the equality is trivial). It is easy to see that in case~(R1), each
of $A,B,K,L$ is separated from $R$.

Note that $A,B,K,L,R$ are as follows: either (a) $t_A=t_B=s_K=s_L$ or (b)
$s_A=s_B=t_K=t_L$, and either (c) $\alpha(s_R)=a$ or (d) $\alpha(t_R)=a$. Let
us examine the possible cases when the combination of~(a) and~(d) takes place.
  \smallskip

1) Let $R$ be a white link, i.e., $R=K_p$. Since $R$ is white and lower than
$A,B,K,L$, we have $R\prec A,B,K,L$ (cf.~\refeq{monochPQ}). Under the exchange
operation (which, as we know, changes the colors of snakes and preserves the
colors of links), $R$ remains white. Then $R\precast A,B,K,L$. Therefore, all
pairs $\{P,R\}$ with $P\in\Pscr_u$ are invariant, and $\varPi(R)=1$ is trivial.
 \smallskip

2) Let $R=L_p$. Since $R$ is black, we have $A,K\prec R\prec B,L$. The exchange
operation changes the colors of $A,B$ and preserves the ones of $K,L,R$. Hence
$B,K\precast R\precast A,L$, giving the permuting pairs $(A,R)$ and $(R,B)$.
Lemma~\ref{lm:atP=atQ} applied to these pairs implies $\varphi_{A,R}=\bar q$
and $\varphi_{R,B}=q$. Then $\varPi(R)=\varphi_{A,R}\varphi_{R,B}=\bar q q=1$.
\smallskip

3) Let $R=A_p$. Then $R\prec A,B,K,L$ and $B,K\precast R\precast A,L$ (since
the exchange operation changes the colors of $A,B,R$ but not $K,L$). This gives
the permuting pairs $(R,B)$ and $(R,K)$. Then $\varphi_{R,B}=q$, by
Lemma~\ref{lm:atP=atQ}, and $\varphi_{R,K}=\bar q$ by Lemma~\ref{lm:2atP=asQ},
and we have $\varPi(R)=\varphi_{R,B}\varphi_{R,K}=1$.
 \smallskip

4) Let $R=B_p$. (In fact, this case is symmetric to the previous one, as it is
obtained by swapping $(\phi,\phi')$ and $(\psi,\psi')$. Yet we prefer to give a
proof in detail.) We have $A,K\prec R\prec B,L$ and $R\precast A,B,K,L$, giving
the permuting pairs $(A,R)$ and $(K,R)$. Then $\varphi_{A,R}=\bar q$, by
Lemma~\ref{lm:atP=atQ}, and $\varphi_{K,R}=q$, by Lemma~\ref{lm:2atP=asQ},
whence $\varPi(R)=1$.
 \smallskip

The other combinations, namely,~(a) and~(c), ~(b) and~(c), ~(b) and~(d), are
examined in a similar way (where we appeal to appropriate lemmas from
Sect.~\SEC{two_paths}, and we leave this to the reader as an exercise.
\end{proof}

Next we consider
\smallskip

\noindent\underline{\emph{Case (R2)}:} ~$u$ is open; in other words, at least
one path among $A,B,K,L$ is vertical (going from $u$ to $v_1$).
  \smallskip

It falls into several subcases examined in propositions below.

 \begin{prop} \label{pr:caseR2_sep}
In case~(R2), let $R\in\Pstan_{X-u}$ be separated from $A,B,K,L$. Then
$\varPi(R)=\tilde\varPi(R)$.
  \end{prop}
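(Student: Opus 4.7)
The deformation $u\mapsto u'$ alters the individual factors $\varphi_{P,R}$ for $P\in\{A,B,K,L\}$, but the claim is that the product $\varPi(R)$ remains unchanged. My idea is to show that on each side only one factor contributes nontrivially and that these two surviving factors agree.

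First I would observe that the shift crosses no vertex of $\Zscr$, so the orderings $\prec$ and $\precast$ on $\Sscr\cup\Lscr$ are preserved and each pair $\{P,R\}$ is invariant (respectively, permuting) if and only if $\{\tilde P,\tilde R\}$ is. In particular only permuting pairs can contribute. The invariant/permuting status of each pair at $u$ can be read off from the separation hypothesis via observation~\refeq{monochPQ} together with the color-flip rule under the exchange operation (snakes change color, links do not).

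On the right-hand side, for each non-vertical $P\in\{A,B,K,L\}$, the endpoint at $u$ moves to $u'$ with $\alpha(u')=a-\eps_1\ne a$. By hypothesis~\refeq{assumption} and the choice of $\eps$, the sets $\{\alpha(s_{\tilde P}),\alpha(t_{\tilde P})\}$ and $\{\alpha(s_{\tilde R}),\alpha(t_{\tilde R})\}$ are disjoint in $\Rset_{>0}$, so Lemma~\ref{lm:varphi=1} yields $\varphi_{\tilde P,\tilde R}=1$. Thus only $\varphi_{\tilde P_0,\tilde R}$ survives, where $\tilde P_0$ is the L-shaped standard path obtained from the vertical $P_0$ by inserting a horizontal segment from $u'$ to column $a$; this pair still shares column $a$ at $v_1$ and is controlled by one of Lemmas~\ref{lm:atP=atQ},~\ref{lm:1atP=asQ}, or~\ref{lm:2atP=asQ}, depending on the orientation of $R$ at $v_p$. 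On the left-hand side, the vertical $P_0$ has $w(P_0)=1$, so $\varphi_{P_0,R}=1$, while each of the three non-vertical pairs $(P,R)$ shares column $a$ at its endpoints and contributes via the relevant lemma among~\ref{lm:asP=asQ}--\ref{lm:2atP=asQ}. The identity $\varPi(R)=\tilde\varPi(R)$ thus reduces to
\[
\varphi_{A,R}\,\varphi_{B,R}\,\varphi_{K,R}\,\varphi_{L,R}=\varphi_{\tilde P_0,\tilde R}.
\]

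I would verify this reduced identity by case analysis on (a) which of $A,B,K,L$ is the vertical $P_0$, (b) whether $u$ is a peak or a pit, and (c) whether $R$'s $v_p$-endpoint is $s_R$ or $t_R$. The main obstacle is the bookkeeping across these sub-configurations. The key structural input driving the cancellation is the parity at $u$: exactly two of the four paths there (the snakes) change color under exchange while the two links do not, so the $q\leftrightarrow\bar q$ flip in Lemmas~\ref{lm:asP=asQ} and~\ref{lm:atP=atQ} pairs cleanly with the color changes, while the horizontal segment inserted in $\tilde P_0$ geometrically absorbs the combined contribution of the three displaced non-vertical endpoints into the single matching factor on the right.
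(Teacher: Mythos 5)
Your reduction is set up correctly: after the shift, Lemma~\ref{lm:varphi=1} kills the factors coming from the displaced standard paths at $u$, the vertical $P_0$ contributes trivially on the left, and the claim collapses to $\varphi_{A,R}\varphi_{B,R}\varphi_{K,R}\varphi_{L,R}=\varphi_{\tilde P_0,\tilde R}$. But the proposal stops exactly where the work begins: the verification of this identity is announced as a case analysis (which path is vertical, peak versus pit at $u$, orientation of $R$) and is then replaced by a heuristic about parity and the inserted horizontal segment ``absorbing'' the other contributions. That verification is the entire content of the proposition; nothing in the plan establishes that the product of the three a priori nontrivial left-hand factors (each equal to $q$, $\bar q$ or $1$ depending on color, on whether the pair is permuting, and on which of Lemmas~\ref{lm:asP=asQ}--\ref{lm:2atP=asQ} applies) collapses to the single right-hand factor. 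The paper sidesteps the multi-way case check by a replacement trick: substitute for the vertical $P_0$ a fictitious \emph{standard} path $P'$ of the same color and type with $s_{P'}=u$; the resulting quadruple at $u$ is then in case~(R1), so Proposition~\ref{pr:caseR1} gives that the product over $(\{A,B,K,L\}-\{P_0\})\cup\{P'\}$ equals $1$, and it only remains to observe that $\varphi_{R,P'}$ and $\varphi_{R,\tilde P_0}$ are reciprocal (one shares a start-coordinate with $R$ at $u$, the other an end-coordinate at $v_1$, so the dual lemmas of Section~\ref{sec:two_paths} give $\{q,\bar q\}$). If you do not import Proposition~\ref{pr:caseR1} in this way, you must actually write out all the cases.

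A second, concrete omission: your item (a) presumes a \emph{unique} vertical path $P_0$, but in case~(R2) the vertices $u$ and $v_1$ may be joined by \emph{two} vertical paths, namely both links $K$ and $L$ (in particular when $u,v_1$ are twins, both are trivial). Then two factors survive on the right, $\varphi_{\tilde K,\tilde R}\varphi_{\tilde L,\tilde R}$, and your reduced identity is not the statement to be proved. The paper treats this subcase separately by the same trick, replacing $K,L$ with standard links $K'',L''$ emanating from $u$ and again invoking Proposition~\ref{pr:caseR1}.
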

  \begin{proof}
~We first assume that $u=v_0$ and $v_1$ are connected by exactly one path $P_0$
(which may be any of $A,B,K,L$) and give a reduction to the previous
proposition, as follows.

Suppose that we replace $P_0$ by a standard path $P'$ of the same color and
type (snake or link) such that $s_{P'}=u$ (and $\alpha(t_{P'})<a$). Then the
set $\Pscr'_u:=(\{A,B,K,L\}-\{P_0\})\cup\{P'\}$ becomes as in case~(R1), and by
Proposition~\ref{pr:caseR1}, the corresponding product $\Pi'(R)$ of values
$\varphi_{R,P}$ over $P\in\Pscr'_u$ is equal to 1. (This relies on the fact
that $R$ is separated from $A,B,K,L$, which implies validity of~\refeq{varPiX}
for $R$ and corresponding $P\in \Pscr'_u$.)

Now compare the effects from $P'$ and $\tilde P_0$. These paths have the same
color and type, and both are separated from, and higher than $R$. Also
$\alpha(s_{P'})=\alpha(t_{\tilde P_0})=a$ (since $s_{P'}=u$ and $t_{\tilde
P_0}=v_1$). Then using appropriate lemmas from Sect.~\SEC{two_paths}, one can
conclude that $\{\varphi_{R,P'},\varphi_{R,\tilde P_0}\}=\{q,\bar q\}$.
Therefore,
   $$
   \tilde\varPi(R)=\varphi_{R,\tilde P_0}=\varPi'(R)\varphi^{-1}_{R,P'} =\varPi(R).
   $$

Now let $u$ and $v_1$ be connected by two paths, namely, by $K,L$. We again can
appeal to Proposition~\ref{pr:caseR1}. Consider $\Pscr''_u:=\{A,B,K'',L''\}$,
where $K'',L''$ are standard links (white and black, respectively) with
$s_{K''}=s_{L''}=u$. Then $\varPi''(R):= \varPi( \varphi_{R,P}\colon
P\in\Pscr''_u)=1$ and $\{\varphi_{R,K''}, \varphi_{R,\tilde
K}\}=\{\varphi_{R,L''}, \varphi_{R,\tilde L}\}=\{q,\bar q\}$, and we obtain
  $$
  \tilde\varPi(R)=\varphi_{R,\tilde K}\varphi_{R,\tilde L}=
  \varPi''(R)\varphi^{-1}_{R,K''}\varphi^{-1}_{R,L''} =\varphi_{R,A}\varphi_{R,B}
   =\varPi(R),
   $$
as required.
 \end{proof}

 \begin{prop} \label{pr:caseR2_nonsep}
In case~(R2), let $R$ be a standard path in $\Pscr_{v_p}$ with $p\ge 1$. Let
$R$ be not separated from at least one of $A,B,K,L$. Then
$\varPi(R)=\tilde\varPi(R)$.
  \end{prop}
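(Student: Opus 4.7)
The plan is to adapt the reduction technique of Proposition~\ref{pr:caseR2_sep}, but with a more direct case analysis replacing the appeal to Proposition~\ref{pr:caseR1}, since the latter depended crucially on $R$ being separated from $\Pscr_u$.

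First I would reduce the number of configurations by symmetry. Swapping $\phi\leftrightarrow\phi'$ (which recolours everything and interchanges $\psi\leftrightarrow\psi'$), together with the structural classification given before Proposition~\ref{pr:caseR2_sep}, allows one to assume that the vertical path $P_0$ is one of three canonical types: a single white snake ($P_0=A$), a single white link ($P_0=K$), or two vertical links realising twins ($P_0=K=L$, so $v_0,v_1$ form a twin pair). In each case the three remaining standard paths in $\Pscr_u$ together with their colours and snake/link types are pinned down.

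Next, for each canonical $P_0$ I would enumerate which $R\in\Pstan_{X-u}$ are non-separated from which members of $\Pscr_u$. By definition, non-separation means that $R$ and the corresponding path lie on a single directed path of one of $\phi,\phi',\psi,\psi'$. Since each of those flows decomposes rigidly into an alternation of snakes and links of matching colour, the ``neighbours'' of $P_0$ along its flow-path are fully determined, which in turn forces the identity, colour, and incidence pattern at $v_p$ of any $R$ that is non-separated from a member of $\Pscr_u$. Crucially, this also pins down the orderings $\prec$ and $\precast$ between $R$ and each of $A,B,K,L$, since those orderings are dictated by the internal order of the common flow-path (and by how the exchange operation splices it with the companion flow).

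With the orderings in hand, the computation proceeds in parallel with the proofs of Propositions~\ref{pr:caseR1} and~\ref{pr:caseR2_sep}. For each pair $(P,R)$ with $P\in\Pscr_u$ I would apply the appropriate lemma among Lemmas~\ref{lm:asP=asQ}--\ref{lm:2atP=asQ} (chosen according to which endpoints of $P$ and $R$ share an $\alpha$-coordinate and to their relative vertical positions), together with Lemma~\ref{lm:varphi=1} for the trivially invariant pairs, and record each $\varphi_{P,R}$ as $1$, $q$, or $\bar q$. Carrying out the same computation in $\tilde G$ gives $\varphi_{\tilde P,\tilde R}$, and the $q$- and $\bar q$-factors should pair up so that the identity $\varPi(R)=\tilde\varPi(R)$ holds subcase by subcase. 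The main obstacle will be the combinatorial bookkeeping: each canonical choice of $P_0$ branches into several subcases according to the non-separation pattern for $R$, and each subcase must be verified separately using the rigid orderings dictated by the common flow-path. A systematic reduction using the symmetries identified at the outset, together with a tabulation of the forced orderings, should keep this verification manageable.
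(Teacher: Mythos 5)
Your proposal matches the paper's own argument: a case analysis on the type of the vertical path $P_0$ (single snake, single link, or two links including the twin situation), using the rigid alternation of snakes and links along the flow-paths to force the $\prec$ and $\precast$ orderings, and then pairing up the resulting $q$/$\bar q$ factors via Lemmas~\ref{lm:asP=asQ}--\ref{lm:2atP=asQ}. The only small imprecision is that two vertical links $K,L$ joining $u$ and $v_1$ need not mean $u,v_1$ are twins (twins are the degenerate sub-case where those links are trivial), but this does not affect the structure of the argument.
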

  \begin{proof}
We first assume that $P_0$ is the unique vertical path connecting $u$ and $v_i$
(in particular, $u$ and $v_1$ are not twins). Then $R$ is not separated from
$P_0$.

Suppose that $P_0$ and $R$ are contained in the same path of the flow $\phi$;
equivalently, both $P_0,R$ are white and $P_0\prec R$. Then neither $\psi$ nor
$\psi'$ has a path containing both $P_0,R$ (this is easy to conclude from the
fact that one of $R$ and $P_{p-1}$ is a snake and the other is a link).
Consider four possible cases for $P_0,R$.

(a) Let both $P_0,R$ be links, i.e., $P_0=K$ and $R=K_p$. Then $A,K\prec
K_p\prec B,L$ and $K_p\precast B,K,A,L$ (since $K\precast K_p$ is impossible by
the above observation). This gives the permuting pairs $(A,K_p)$ and $(\tilde
K,K_p)$, yielding $\varphi_{A,K_p}=\varphi_{\tilde K,K_p}$.

(b) Let $P_0=K$ and $R=A_p$. Then $A,K\prec A_p\prec B,L$ and $B,K\precast
A_p\precast A,L$. This gives the permuting pairs $(A,A_p)$ and $(A_p,B)$,
yielding $\varphi_{A,A_p}\varphi_{\tilde A_p,B}=1=\varphi_{\tilde K,A_p}$.

(c) Let $P_0=A$ and $R=K_p$. Then $K,A\prec K_p\prec L,B$ and $K_p\precast
K,B,L,A$. This gives the permuting pairs $(K,K_p)$ and $(\tilde A,K_p)$,
yielding $\varphi_{K,K_p}=\varphi_{\tilde A,K_p}$.

(d) Let $P_0=A$ and $R=A_p$. Then $K,A\prec A_p\prec L,B$ and $K,B\precast
A_p\precast L,A$. This gives the permuting pairs $(\tilde A,A_p)$ and $(\tilde
A_p,B)$, yielding $\varphi_{\tilde A,A_p}=\varphi_{A_p,B}$.

In all cases, we obtain $\varPi(R)=\tilde\varPi(R)$.

When $P_0,R$ are contained in the same path in $\phi'$ (i.e., $P_0,R$ are black
and $P_0\prec R$), we argue in a similar way. The cases with $P_0,R$ contained
in the same path of $\psi$ or $\psi'$ are symmetric.

A similar analysis is applicable (yielding $\varPi(R)=\tilde\varPi(R)$) when
$u$ and $v_1$ are connected by two vertical paths (namely, $K,L$) and exactly
one relation among $K\prec R$, $L\prec R$, $K\precast R$ and $L\precast R$
takes place (equivalently: either $K,R$ or $L,R$ are separated, not both).

Finally, let $u$ and $v_1$ be connected by both $K,L$, and assume that $K,R$
are not separated, and similarly for $L,R$. An important special case is when
$p=1$ and $u,v_1$ are twins.

Note that from the assumption it easily follows that $R$ is a snake. If $R$ is
the white snake $A_p$, then we have $A,K\prec A_p\prec B,L$ and
$B,K,A,L\precast A_p$. This gives the permuting pairs $(A,A_p)$ and $(\tilde
K,A_p)$, yielding $\varphi_{A,A_p}=\varphi_{\tilde K,A_p}$ (since
$\alpha(t_A)=\alpha(t_{\tilde K}$)). The case with $R=B_p$ is symmetric. In
both cases, $\varPi(R)=\tilde\varPi(R)$.
  \end{proof}

 \begin{prop} \label{pr:caseR2_P0}
Let $R=P_0$ be the unique vertical path connecting $u$ and $v_1$. Then
$\varPi(R)=\tilde\varPi(R)=1$.
  \end{prop}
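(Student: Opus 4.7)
The equality $\varPi(R)=1$ is immediate. Since $R=P_0$ is a directed vertical path in $G$, every edge of $R$ is a V-edge and therefore has weight $1$ by~\refeq{edge_weight}(iii); thus $w(R)=1$, which is central, so $w(P)w(R)=w(R)w(P)$ and $\varphi_{P,R}=1$ for every $P\in\{A,B,K,L\}$ (with the factor corresponding to $P=R$ tautologically equal to $1$). Hence $\varPi(R)=1$.

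For $\tilde\varPi(R)=1$, my plan is as follows. After the deformation $u\mapsto u'$ with $\alpha(u')<a$ and $\beta(u')=\beta(u)$, the image $\tilde R$ is no longer vertical: it becomes a standard path from $u'$ to $v_1$, consisting of a short horizontal prefix bringing $\alpha$ back to $a$, followed by the original V-edge tail descending to $v_1$. The three other paths in $\{A,B,K,L\}-\{R\}$ are transformed into standard paths $\tilde P$ sharing the endvertex $u'$ with $\tilde R$; by~\refeq{assumption} they meet $\tilde R$ in no other $\alpha$-coordinate except possibly at $v_1$.

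I would then split into four sub-cases according to $P_0\in\{A,B,K,L\}$. Since V-edges point south, $P_0$ being a vertical snake forces $u$ to be a peak of $Z$ (both snakes leave $u$), and $P_0$ being a vertical link forces $u$ to be a pit (both links leave $u$). In each sub-case the local configuration of $\phi,\phi'$ around $u$ and $v_1$ is determined, which fixes the orderings of the four paths in $\Nscr$ and $\Nscr^\ast$ and hence tells us which of the three pairs $\{\tilde P,\tilde R\}$ are permuting versus invariant. For each permuting pair I would apply the appropriate lemma among Lemmas~\ref{lm:asP=asQ}--\ref{lm:2atP=asQ} at the shared endvertex $u'$ (or $v_1$, where relevant) to read off $\varphi_{\tilde P,\tilde R}\in\{q,q^{-1}\}$.

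The main obstacle will be the case-by-case bookkeeping and the verification that the $q$-factors cancel. My expected pattern, mirroring the cancellation mechanism of Propositions~\ref{pr:caseR1} and~\ref{pr:caseR2_nonsep}, is that in every sub-case exactly two of the three nontrivial pairs are permuting, with reciprocal factors (one contributed by Lemma~\ref{lm:asP=asQ} at the shared start $u'$ and the other by Lemma~\ref{lm:1atP=asQ} or~\ref{lm:2atP=asQ} at an endvertex where one path's start meets another path's end), while the third pair remains invariant. Collapsing these factors yields $\tilde\varPi(R)=1$, completing the proof.
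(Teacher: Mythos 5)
Your strategy is the same as the paper's: $\varPi(R)=1$ because $R$ is vertical (your justification via \refeq{edge_weight}(iii), giving $w(R)=1$ central, is exactly why the paper dismisses this equality as trivial), and $\tilde\varPi(R)=1$ is then checked by running through the four sub-cases $R\in\{A,B,K,L\}$, reading off the orders $\prec$ and $\precast$ around $\tilde u$, and cancelling factors supplied by the lemmas of Section~\ref{sec:two_paths}. Your structural observation that a vertical snake forces $u$ to be a peak and a vertical link forces $u$ to be a pit is correct and is precisely what fixes the incidences used in the paper ($s_{\tilde A}=s_{\tilde B}=t_{\tilde L}=\tilde u$ in the snake cases, $t_{\tilde A}=t_{\tilde B}=s_{\tilde K}=\tilde u$ in the link cases). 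The one caveat is that the uniform pattern you predict --- ``exactly two of the three pairs permuting, one factor from Lemma~\ref{lm:asP=asQ} at the common start and one from Lemma~\ref{lm:1atP=asQ} or~\ref{lm:2atP=asQ}'' --- fails in two of the four cases, so it cannot be used as a shortcut. For $R=L$ one finds $\tilde A,\tilde K,\tilde B\prec\tilde L$ and $\tilde B,\tilde K,\tilde A\precast\tilde L$, so \emph{all} pairs are invariant and the product is $1$ with nothing to cancel; for $R=K$ the permuting pairs are $(\tilde A,\tilde K)$ and $(\tilde K,\tilde B)$, and \emph{both} factors arise from the $t_P=s_Q$ configuration of Lemma~\ref{lm:1atP=asQ} at the pit $\tilde u$, giving $q$ and $\bar q$. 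Only for $R=A$ and (symmetrically) $R=B$ does your predicted mix of a common-start factor with a start-meets-end factor actually occur. The conclusion $\tilde\varPi(R)=1$ does hold in every case, so your plan goes through, but the four verifications still need to be written out, and the heuristic you propose to guide them would mislead you in two of them.
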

  \begin{proof}
~The equality $\varPi(R)=1$ is trivial. To see $\tilde\varPi(R)=1$, consider
possible cases for $R$. If $R=K$, then $\tilde A\prec \tilde K\prec \tilde
B,\tilde L$ and $\tilde B\precast \tilde K\precast \tilde A,\tilde L$, giving
the permuting pairs $(\tilde A,\tilde K)$ and $(\tilde K,\tilde B)$ (note that
$t_{\tilde A}=t_{\tilde B}=s_{\tilde K}=\tilde u$). If $R=L$, then $\tilde
A,\tilde K,\tilde B\prec\tilde L$ and $\tilde B,\tilde K,\tilde A\precast
\tilde L$; so all pairs involving $\tilde L$ are invariant. If $R=A$, then
$\tilde K\prec\tilde A\prec \tilde L,\tilde B$ and $\tilde K,\tilde B,\tilde
L\precast \tilde A$, giving the permuting pairs $(\tilde A,\tilde L)$ and
$(\tilde A,\tilde B)$ (note that $s_{\tilde A}=s_{\tilde B}=t_{\tilde L}=\tilde
u$). And the case $R=B$ is symmetric to the previous one.

In all cases, using appropriate lemmas from Sect.~\SEC{two_paths} (and relying
on the fact that all paths $\tilde A,\tilde B,\tilde K,\tilde L$ are standard),
one can conclude that $\tilde\varPi(R)=1$.
  \end{proof}

 \begin{prop} \label{pr:caseR2_KL}
Let both $K,L$ be vertical. Then $\varPi(K)\varPi(L)=
\tilde\varPi(K)\tilde\varPi(L)=1$.
  \end{prop}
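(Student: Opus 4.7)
The equality $\varPi(K)\varPi(L)=1$ in the original graph $G$ is immediate. Since $K$ and $L$ are vertical directed paths, each of their edges is a V-edge and, by convention~\refeq{edge_weight}(iii), has weight $1$; hence $w(K)=w(L)=1$. Consequently $\varphi_{P,K}=\varphi_{P,L}=1$ for every path $P$, so every factor appearing in $\varPi(K)$ and $\varPi(L)$ is $1$.

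For the deformed claim $\tilde\varPi(K)\tilde\varPi(L)=1$, I first pin down the geometry at $u$. Since $K,L$ are vertical directed paths from $u$ down to $v_1$ (which lies below $u$, as $u$ is the highest vertex in $X$), the SE-graph direction convention forces $s_K=s_L=u$ and $t_K=t_L=v_1$. This is incompatible with $u$ being a peak of $Z$ (which would demand $t_K=t_L=u$), so $u$ must be a pit, and in particular $t_A=t_B=s_K=s_L=u$. Under the perturbation, $\tilde u$ moves to $(\alpha(\tilde u),\beta(u))$ with $\alpha(\tilde u)<a$; the paths $\tilde K,\tilde L$ acquire a short eastbound segment from $\tilde u$ to column $a$ and thereby become standard, while all four paths $\tilde A,\tilde B,\tilde K,\tilde L$ continue to meet at the common endpoint $\tilde u$.

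Next I would classify the pairs contributing to $\tilde\varPi(K)\tilde\varPi(L)$ as permuting or invariant. The flow structure at the pit reads $\cdots A\circ K\cdots$ in $\phi$ and $\cdots B\circ L\cdots$ in $\phi'$; after exchange, where snakes swap flows and links stay put, it reads $\cdots B\circ K\cdots$ in $\psi$ and $\cdots A\circ L\cdots$ in $\psi'$. Combined with the convention that $\phi$-paths precede $\phi'$-paths in $\Nscr$ (respectively, $\psi$-paths precede $\psi'$-paths in $\Nscr^\ast$), a direct inspection shows that $(\tilde A,\tilde K)$ and $(\tilde B,\tilde K)$ are permuting, whereas $(\tilde L,\tilde K)$, $(\tilde A,\tilde L)$, $(\tilde B,\tilde L)$, and $(\tilde K,\tilde L)$ are all invariant. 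Hence $\tilde\varPi(L)=1$ automatically, and $\tilde\varPi(K)$ collapses to the single product $\varphi_{\tilde A,\tilde K}\varphi_{\tilde B,\tilde K}$.

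Finally, because $t_{\tilde A}=t_{\tilde B}=s_{\tilde K}=\tilde u$ with coincident $\beta$-coordinates, Lemma~\ref{lm:1atP=asQ} applies to both pairs and yields $\varphi(\tilde A,\tilde K)=\varphi(\tilde B,\tilde K)=q$. However, the orderings $A\prec K$ and $K\prec B$ take opposite orientations relative to the convention defining $\varphi_{P,Q}$ via \refeq{phiAB}, so $\varphi_{\tilde A,\tilde K}=q$ while $\varphi_{\tilde B,\tilde K}=q^{-1}$; the two cancel to give $\tilde\varPi(K)=1$, and therefore $\tilde\varPi(K)\tilde\varPi(L)=1$. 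The only delicate point is this orientation bookkeeping, which turns one of the two Lemma~\ref{lm:1atP=asQ} outputs into its reciprocal; the remaining ingredients follow straightforwardly from the pit structure at $u$ and the trivial weight of vertical paths.
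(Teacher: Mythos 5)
Your proof is correct and follows essentially the same route as the paper's: the original product is trivially $1$ because vertical paths have weight $1$, and in the deformed graph the orders $\tilde A\prec\tilde K\prec\tilde B\prec\tilde L$ and $\tilde B\precast\tilde K\precast\tilde A\precast\tilde L$ leave only the permuting pairs $(\tilde A,\tilde K)$ and $(\tilde K,\tilde B)$, whose contributions $q$ and $\bar q$ (via Lemma~\ref{lm:1atP=asQ} and the orientation convention in \refeq{phiAB}) cancel. Your additional justification that $u$ must be a pit and that $\tilde K,\tilde L$ become standard is a correct elaboration of details the paper leaves implicit.
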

  \begin{proof}
The equality $\varPi(K)\varPi(L)=1$ is trivial. To see
$\tilde\varPi(K)\tilde\varPi(L)=1$, observe that $\tilde A\prec\tilde K\prec
\tilde B\prec \tilde L$ and $\tilde B\precast \tilde K\precast \tilde A\precast
\tilde L$. This gives the permuting pairs $(\tilde A,\tilde K)$ and $(\tilde
K,\tilde B)$. Using Lemma~\ref{lm:1atP=asQ}, we obtain $\varphi_{\tilde
A,\tilde K}=q$ and $\varphi_{\tilde K\tilde B}=\bar q$, and the result follows.
  \end{proof}

Taken together, Propositions~\ref{pr:caseR2_sep}--\ref{pr:caseR2_KL} embrace
all possibilities in case~(R2). Adding to them Proposition~\ref{pr:caseR1}
concerning case~(R1), we easily obtain the desired relation~\refeq{varPiX} in a
degenerate case.

This completes the proof of Theorem~\ref{tm:single_exch} in case~(C), namely,
relation~\refeq{caseC}.  \hfill\qed\qed

 \subsection{Other cases.} \label{ssec:othercases}
Let $(I|J),(I'|J'),\phi,\phi',\psi,\psi'$ and $\pi=\{f,g\}$ be as in the
hypotheses of Theorem~\ref{tm:single_exch}. We have proved this theorem in
case~(C), i.e., when $\pi$ is a $C$-couple with $f<g$ and $f\in J$ (see the
beginning of Sect.~\SEC{exchange}). In other words, the exchange path
$Z=P(\pi)$, used to transform the initial double flow $(\phi,\phi')$ into the
new double flow $(\psi,\psi')$, connects the sinks $c_f$ and $c_g$ that are
covered by the ``white flow'' $\phi$ and the ``black flow'' $\phi'$,
respectively.

The other possible cases in the theorem are as follows:
  \smallskip

(C1) ~$\pi$ is a $C$-couple with $f<g$ and $f\in J'$;
\smallskip

(C2) ~$\pi$ is an $R$-couple with $f<g$ and $f\in I$;
\smallskip

(C3) ~$\pi$ is an $R$-couple with $f<g$ and $f\in I'$;
\smallskip

(C4) ~$\pi$ is an $RC$-couple with $f\in I$ and $g\in J$;
\smallskip

(C5) ~$\pi$ is an $RC$-couple with $f\in I'$ and $g\in J'$.
\smallskip

Case~(C1) is symmetric to~(C). This means that if double flows $(\phi,\phi')$
and $(\psi,\psi')$ are obtained from each other by applying the exchange
operation using $\pi$ (which, in particular, changes the ``colors'' of both $f$
and $g$), and if one double flow is subject to~(C) (i.e., $f$ concerns the
first, ``white'', flow), then the other is subject to~(C1) (i.e., $f$ concerns
the second, ``black'', flow). Rewriting $w(\phi)w(\phi')=qw(\psi)w(\psi')$
(cf.~\refeq{caseC}) as $w(\psi)w(\psi')=q^{-1}w(\phi)w(\phi')$, we just obtain
the required equality in case~(C1) (where $(\psi,\psi')$ and $(\phi,\phi')$
play the roles of the initial and updated double flows, respectively).

For a similar reasons, case~(C3) is symmetric to~(C2), and~(C5) is symmetric
to~(C4). So it suffices to establish the desired equalities merely in
cases~(C2) and~(C4).

To do this, we appeal to reasonings similar to those in
Sects.~\SSEC{prop1}--\SSEC{degenerate}. More precisely, it is not difficult to
see that descriptions in Sects.~\SSEC{prop1} and \SSEC{prop3} (concerning
link-link and snake-link pairs in $\Nscr$) remain applicable and
Propositions~\ref{pr:link-link} and~\ref{pr:seg-link} are directly extended to
cases~(C2) and~(C4). The method of getting rid of degeneracies developed in
Sect.~\SSEC{degenerate} does work, without any troubles, for~(C2) and~(C4) as
well.

As to the method in Sect~\SSEC{prop2} (concerning snake-snake pairs in
case~(C)), it should be modified as follows. We use terminology and notation
from Sects.~\SSEC{seglink} and~\SSEC{prop2} and appeal to
Lemma~\ref{lm:gammaD}.

When dealing with case~(C2), we represent the exchange path $Z=P(\pi)$ as a
concatenation $Z_1\circ \bar Z_2\circ Z_3\circ\cdots \circ\bar Z_k$, where each
$Z_i$ with $i$ odd (even) is a snake contained in the black flow $\phi'$ (resp.
the white flow $\phi$). Then $Z_1$ begins at the source $r_g$ and $Z_k$ begins
at the source $r_f$. An example with $k=6$ is illustrated in the left fragment
of the picture:

\vspace{0cm}
\begin{center}
\includegraphics{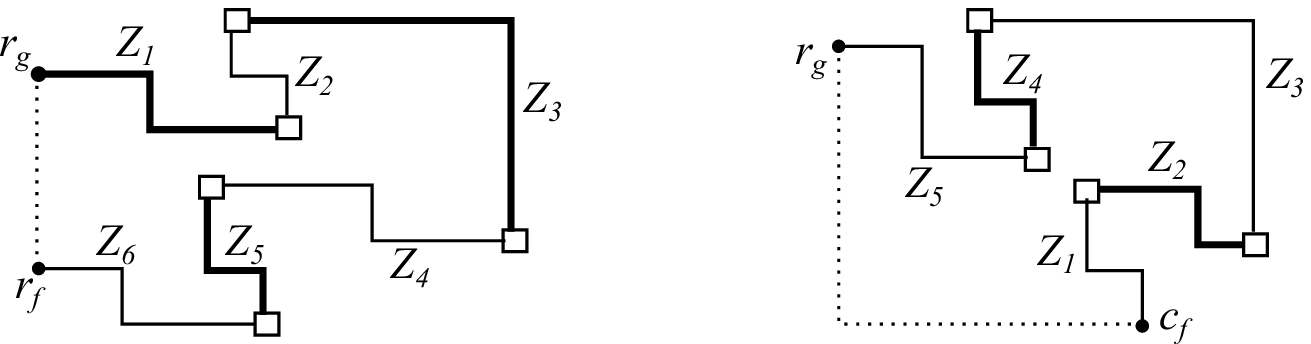}
\end{center}
\vspace{0cm}

The common vertex (bend) of $Z_i$ and $Z_{i+1}$ is denoted by $z_i$. As before,
we associate with a bend $z$ the number $\gamma(z)$ (equal to 1 if, in the pair
of snakes sharing $z$, the white snake is lower that the black one, and $-1$
otherwise), and define $\gamma_Z$ as in~\refeq{gammaZ}. We turn $Z$ into simple
cycle $D$ by combining the directed path $Z_k$ (from $r_f$ to $z_{k-1}$) with
the vertical path from $r_g$ to $r_f$, which is formally added to $G$. (In the
above picture, this path is drawn by a dotted line.) Then, compared with $Z$,
the cycle $D$ has an additional bend, namely, $r_g$. Since the extended white
path $\tilde Z_k$ is lower than the black path $Z_1$, we have $\gamma(r_g)=1$,
and therefore $\gamma_D=\gamma_Z+1$.

One can see that the cycle $D$ is oriented clockwise (where, as before, the
orientation is defined according to that of black snakes). So $\gamma_D=2$, by
Lemma~\ref{lm:gammaD}, implying $\gamma_Z=1$. This is equivalent to the
``snake-snake relation'' $\varphi^{II}=q$, and as a consequence, we obtain the
desired equality
  $$
  w(\phi)w(\phi')=qw(\psi)w(\psi').
  $$

Finally, in case~(C4), we represent the exchange path $Z$ as the corresponding
concatenation $\bar Z_1\circ Z_2\circ \bar Z_3\circ\cdots \circ Z_{k-1}\circ
\bar Z_k$ (with $k$ odd), where the first white snake $Z_1$ ends at the sink
$c_f$ and the last white snake $Z_k$ begins at the source $r_g$. See the right
fragment of the above picture, where $k=5$. We turn $Z$ into simple cycle $D$
by adding a new ``black snake'' $Z_{k+1}$ beginning at $r_g$ and ending at
$c_f$ (it is formed by the vertical path from $r_g$ to $(0,0)$, followed by the
horizontal path from $(0,0)$ to $c_f$; see the above picture). Compared with
$Z$, the cycle $D$ has two additional bends, namely, $r_g$ and $c_f$. Since the
black snake $Z_{k+1}$ is lower than both $Z_1$ and $Z_k$, we have
$\gamma(r_g)=\gamma(c_f)=-1$, whence $\gamma_D=\gamma_Z-2$. Note that the cycle
$D$ is oriented counterclockwise. Therefore, $\gamma_D=-2$, by
Lemma~\ref{lm:gammaD}, implying $\gamma_Z=0$. As a result, we obtain the
desired equality $w(\phi)w(\phi')=w(\psi)w(\psi')$.

This completes the proof of Theorem~\ref{tm:single_exch}.



\begin{thebibliography}{99}
 %
\bibitem{cach} G.~Cauchon, Spectre premier de $O_q(M_n(k))$: image canonique
et s\'eparation normale, \textsl{J.~Algebra} \textbf{260} (2) (2003) 519--569.
 %
\bibitem{cast1} K.~Casteels, A graph theoretic method for determining
generating sets of prime ideals in quantum matrices, \textsl{J.~Algebra}
\textbf{330} (2011) 188--205.
 %
\bibitem{cast2} K.~Casteels, Quantum matrices by paths, \textsl{Algebra and Number Theory}
\textbf{8} (8) (2014) 1857--1912.
 %
\bibitem{DK} V.~Danilov and A.~Karzanov, Path systems in certain planar graphs
and quadratic identities for quantum minors,
\textsl{ArXiv}:1604.00338[math.QA].
 %
\bibitem{DKK} V.~Danilov, A.~Karzanov, and G.~Koshevoy, Planar flows and
quadratic relations over semirings, \textsl{J. Algebraic Combin.} \textbf{36}
(2012) 441--474.
  %
\bibitem{lind} B. Lindstr\"om, On the vector representations of induced matroids,
\textsl{Bull. London Math. Soc.} \textbf{5} (1973) 85--90.
  %
\bibitem{man} Yu.I.~Manin, Quantum Groups and Non Commutative Geometry,
Vol.~\textbf{49}, Centre de Recherches Mathematiques Montreal, 1988.
   %

\end{thebibliography}
\end{document}